\newtheorem{thm}{Theorem}[section]
\newtheorem{lem}[thm]{Lemma}
\newtheorem{defn}[thm]{Definition}
\newtheorem{rmk}[thm]{Remark}
\newtheorem{prop}[thm]{Proposition}
\newtheorem{cor}[thm]{Corollary}
\newenvironment{claimm}[1]{\par\noindent\textbf{Claim:}\space#1}{}
\newenvironment{claimmproof}[1]{\par\noindent{Proof:}\space#1}{}
\title{A Type B Analogue to Ribbon Tableaux}
\begin{document}

\author{Ezgi Kantarc\i\ O\u{g}uz}

\address{Department of Mathematics\\ University of Southern California \\ Los Angeles, CA}
\email{kantarci@usc.edu}

\begin{abstract}We introduce a shifted analogue of the ribbon tableaux defined by James and Kerber \cite{MR644144}. For any positive integer $k$, we give a bijection between the $k$-ribbon fillings of a shifted shape and regular fillings of a $\lfloor k/2\rfloor$-tuple of shapes called its $k$-quotient. We define the corresponding generating functions, and prove that they are symmetric, Schur positive and Schur Q-positive. Then we introduce a Schur Q-positive $q$-refinement. 
\end{abstract}
\maketitle
\section{Introduction}
The study of ribbon tableaux on shifted shapes combines two existing areas of work: the theory of ribbon tableaux and Schur's Q-functions. Ribbon tableaux introduced by James and Kerber\cite{MR644144} have applications to the representations of the symmetric group over a field of finite characteristic. Their theory was extend to the LLT polynomials by Lascoux, Leclerc and Thibon which arise in the Fock space representation of the universal enveloping algebra of quantum affine $\mathfrak{sl}_n$\cite{MR1434225}.  An expansion of Macdonald polynomials in terms of the LLT polynomials is given in \cite{MR2138143}, and many other important symmetric functions have natural expansions into LLT polynomials.

Schur's Q-functions come up as the symmetric functions that correspond to the shifted diagrams. They have a connection to the irreducible spin characters of the symmetric group, analogous to that of Schur functions and irreducible characters of linear representations\cite{MR3443860}. Since their introduction in \cite{MR1580818}, applications to diverse mathematical fields have been discovered, including the
cohomology of isotropic Grassmannians \cite{MR1094746} and polynomial solutions to the BKP equation in hydrodynamics.

In this work, we are merging these two ideas to initiate a combinatorial theory of ribbon tableaux for shifted shapes. The $k$-quotients and $k$-cores for shifted shapes were previously studied by Morris and Yaseen in 1986\cite{MR809494}. We expand upon their work, reformulating it in a more explicit way that is analogous to the ribbon tilings of unshifted shapes due to James and Kerber\cite{MR644144}. We also look at standard and semi-standard fillings of these shapes, and define shifted $k$-ribbon functions. We give a positive expansion in terms of Schur's Q-functions, analogous to the unshifted case.

The positivity result hints at the possibility of defining a type B analogue for the LLT polynomials which could have far reaching applications. We give such a definition using Type $A$ LLT polynomials and prove its Schur Q-positivity. We further show that there is no natural expansion of the spin statistic that would allow a direct definition using shifted ribbon tableaux, and provide some counter examples that should prove valuable for further research.  

The layout of this paper is as follows: In Section 2, we recall the notions of Schur functions, Schur's Q-functions and ribbon tableaux. In Section 3, we give a graphical description of $k$-ribbons on a shifted diagram, which differs from the standard case in that we have some 'double ribbons', which are allowed to contain $2\times2$ boxes. We define the shifted $k$-ribbon tableaux and the corresponding P- and Q-functions, as well as state our main theorem giving an expansion of a shifted ribbon Q-function in terms of Schur's Q= funtions. Sections 4 and 5 give the combinatorial constructions necessary to prove this, including a new type of object that comes up in shifted $k$-quotients which we call folded tableaux. We give bijections between ribbon fillings of a shifted diagram and its $k$-quotient, both in the standard and semi-standard case. In Section 5, we give a description of the shifted ribbon  functions in terms of peak functions. Lastly, in Section 7, we define a $q$-refinement of the shifted ribbon function and prove its Schur Q-positivity. We further discuss the difficulties of defining a direct analogue of the spin statistic from the unshifted case, and provide some counter-examples. 

\section{Preliminaries}

\subsection{Schur Functions}
A \emph{partition} of $n$ is a list $\mu=(\mu_1,\mu_2,\ldots,\mu_k)$ of non-increasing positive integers adding up to $n$, called its \emph{parts}. Here, $n$ is called the \emph{size} of the partition, denoted $|\mu|$, and the number of its parts is called its \emph{height} denoted ht($\mu$). With every partition, we associate a \emph{Young diagram}, an array with $\mu_i$ boxes on row $i$. 

A \emph{semi-standard Young tableau} of shape $\mu$ is a filling of its boxes with positive integers such that each column will be increasing from bottom to top, and each row will be non-decreasing from left to right. A semi-standard tableau that contains each of the numbers from $1$ to $n$ exactly once is called \emph{standard}. We will denote the set of semi-standard tableaux of shape $\mu$ by $SSYT(\mu)$, and the set of standard ones by $SYT(\mu)$. 

\begin{figure}[ht]
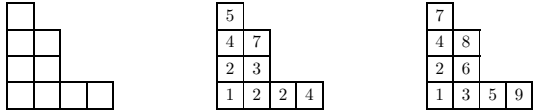

\center
\scalebox{.65}{
     \begin{ytableau}
   \\
    &  \\
   &  \\
    & & & \\
  \end{ytableau}\qquad \qquad \qquad  \begin{ytableau}
   5\\
  4 &7  \\
  2 & 3\\
    1 &2 &2 &4 \\
  \end{ytableau}\qquad \qquad \qquad  \begin{ytableau}
   7\\
    4&8\\
   2& 6\\
    1& 3& 5& 9\\
  \end{ytableau}}
  \caption {The diagram $\mu=(4,2,2,1)$ with corresponding semi-standard and standard fillings.}
  \label{fig:startunshifted}
\end{figure}
For a partition $\mu$ we define its \emph{Schur function} as follows:
\begin{equation}
s_{\mu}(X)=\sum_{T\in SSYT(\mu)}X^{T}
\end{equation}
Here $X^{T}$ denotes the monomial where the power of $x_i$ is given by the number of times $i$ occurs in $T$. The semi-standard filling in Figure \ref{fig:startunshifted} corresponds to the monomial $x_1x_2^3x_3x_4^2x_5x_7$.

The \emph{reading word} of a tableau is a reading of all its labels from left to right, top to bottom.
For example, the semi-standard tableau from Figure \ref{fig:startunshifted} has the reading word $547231224$, where as the standard one has the reading word $748261359$.

Note that the reading word of a standard tableau $S$ gives a permutation of numbers from $1$ to $|n|$, so we can talk about its descent, peak and spike sets. The \emph{descent set} of a standard tableau $T$ is defined as follows:

\begin{eqnarray*}
\text{Des}(T)&=&\{i\mid i\text{ is to the left of }i+1\text{ in the reading word of }T\}\subset [n-1]
\end{eqnarray*}

In general, for any set $D \subset [n]$, the peak and spike sets of $D$ are given by: 

\begin{eqnarray*}
\text{Peak}(D)&=&\{i\mid i\in \text{D}\text{ and }i-1 \notin \text{D}\}\\
\text{Spike}(D)&=&\{i\mid i\in \text{D}\text{ and }i-1 \notin \text{D}\text{ or }i\notin \text{D}\text{ and }i-1 \in \text{D}\}.
\end{eqnarray*}

Throughout this work, we will mainly be interested in the case when $D$ is the descent set of the reading word for a tableau. For a tableau $T$, we will use to notations $Peak(T)$ and $Spike(T)$ to denote $Peak(Des(T))$ and $Spike(Des(T))$ respectively. The standard tableau from Figure \ref{fig:startunshifted} has descent, peak and spike sets $\{1,3,5,6\}$, $\{3,5\}$ and $\{2,3,4,5,7\}$ respectively. In 1984, Gessel\cite{MR777705} has shown that the Schur function for a partition $\mu$ can be expressed in terms of descent sets:
\begin{equation*}
s_{\mu}(X)=\sum_{T\in SYT(\mu)}F_{Des(T)}(X)\end{equation*}
where $F_{D}(X)$, $D\in[n-1]$ denotes Gessel's fundamental basis for quasisymmetric functions  defined by:
\begin{equation}
F_D(X)=\sum_{\substack{i_1\leq i_2\leq \cdots \leq i_m\\t\in D \Rightarrow i_t \neq i_{t+1}}}x_{i_1}x_{i_2}x_{i_3}\cdots x_{i_m}\end{equation}
This formula allows us to calculate the Schur function of a partition using only its standard fillings. For example, the Schur function of (3,2), whose standard fillings are given in Figure \ref{fig:32schur} is:
$$s_{(3,2)}(X)=F_3(X)+F_2(X)+F_4(X)+2F_{2,4}(X)$$
\begin{figure}[ht]
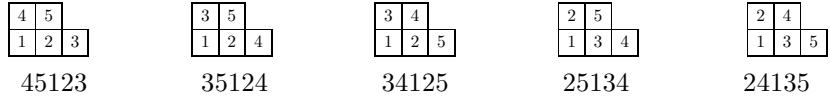

    \centering
    \scalebox{.65}{
     \begin{ytableau}
   4&5  \\
    1& 2&3\\
  \end{ytableau}\qquad \qquad \qquad  \begin{ytableau}
   3& 5 \\
    1&2 &4\\
  \end{ytableau}\qquad \qquad \qquad  \begin{ytableau}
   3& 4 \\
    1&2 &5\\
  \end{ytableau}\qquad \qquad \qquad  \begin{ytableau}
   2& 5 \\
    1&3 &4\\
  \end{ytableau}
  \qquad \qquad \qquad  \begin{ytableau}
   2& 4 \\
    1&3 &5\\
  \end{ytableau}}
  \\
  \vspace{0.2cm}
  45123 \qquad \quad \quad 35124  \quad \qquad \quad 34125 \quad \qquad \quad 25134 \quad \quad \qquad 24135
\caption{The standard tableaux of shape $(3,2)$ and their reading words}
\label{fig:32schur}
\end{figure}

\subsection{Ribbon Tableaux}

Given two diagrams $\mu \subset \nu$, the \emph{skew diagram} $\nu / \mu$ is the diagram of  $\nu$ minus the cells that correspond to $\mu$. A \emph{$k$-ribbon} on an unshifted diagram is a connected skew-diagram that contains no $2\times2$ square. A $k$-ribbon $R$  is removable from diagram $\mu$ if $\mu / \nu=R$ for some $\nu \subset \mu$. A diagram with no removable $k$-ribbon is called a $k$-core. 

 On a given {$k$-ribbon} $R$, the rightmost lowest cell is called the \emph{head} of the $R$. A set of disjoint ribbons form a \emph{horizontal strip} if their disjoint union is a (not necessarily connected) skew-shape and their heads lie on different columns. A \emph{semi-standard $k$-ribbon tableau} of shape $\mu$ is a sequence of shifted diagrams $\mu_0\subset \mu_1\subset\cdots\subset \mu_n=\mu$ where $\mu_0$ is a $k$-core, and each $\mu_{i}/ \mu_{i-1}$ is a horizontal $k$-ribbon strip, the ribbon on which we label by $i$.  A semi-standard $k$-ribbon tableau is called \emph{standard} if all labels from $1$ to $n$ occur exactly once. The generating function for the $k$-ribbon tableaux of shape $\mu$ is given by:

$$GF_{\mu/\mu_0}^{(k)}(X)=\sum_{T \in SSRT_k(\mu)}X^T=\sum_{S \in SRT_k(\mu)}F_{Des(S)}X$$ where $SSRT_k(\mu)$ denotes the set of semi-standard $k$-ribbon tableaux of shape $\mu$, and $SRT_k(\mu)$ denotes the set of standard ones. 

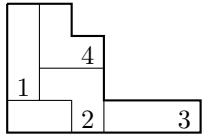
\begin{figure}
    \centering
\begin{tikzpicture}[scale=3/7]
%outershell
\draw[thick] (0,0)--(2,0)--(2,-1)--(3,-1)--(3,-3)--(6,-3)--(6,-4)--(0,-4)--(0,0);
%ribbons
\draw (0,-3)--(1,-3)--(1,0) (1,-2)--(3,-2) (1,-3)--(2,-3)--(2,-4) (3,-3)--(3,-4);
%labels
\node at (.5,-2.6) {1};
\node at (2.5,-3.6) {2};
\node at (2.5,-1.6) {4};
\node at (5.5,-3.6) {3};
\end{tikzpicture}
\caption{A 3-ribbon tableau of shape $\mu=(6,3,3,2)$}
\end{figure}
James and Kerber \cite{MR644144} showed that there is a weight-preserving bijection between semi-standard ribbon tableaux of shape $\mu$, and semi-standard fillings of a $k$-tuple of unshifted shapes $(\mu^0, \mu^1,\ldots,\mu^{k-1})$ called the \emph{$k$-quotient of $\mu$}. This shows that:
\begin{eqnarray}\label{Arib}
GF_{\mu/\mu_0}^{(k)}(X)=s_{\mu^0}(X)s_{\mu^1}(X)\cdots s_{\mu^{k-1}}(X)
\end{eqnarray}
The \emph{spin} of a ribbon $R$, defined by Lascoux, Leclerc and Thibon \cite{MR1434225} is $(|R|-ht(R)-1)/2$, which is not necessarily an integer. For a semi-standard $k$-ribbon tableaux $T$ of shape $\mu$, we define the \emph{spin} of $T$ to be the sum of the spins of all ribbons on $T$. The \emph{cospin} of $T$ is given by $spin(T*)-spin(T)$ where $T*$ is the semi-standard $k$-ribbon tableaux of shape $\mu$ with the maximum spin. The cospin is an integer for every tiling $T$.

Multiplying each tableau by a variable $q$ raised to its cospin gives us the LLT-polynomial: 
\begin{eqnarray}
GF^{(k)}_{\mu/\mu_0}(X;q)=\sum_{T \in SRT_k(\mu)} q^{cospin (T)} F_{Des(T)}(X) 
\end{eqnarray}
The LLT-polynomials can be written as a sum of Schur polynomials with coefficients from $\mathbb{Z}^+[q]$\cite{MR2115257}.
\subsection{Shifted Tableaux}

A partition $\lambda=(\lambda_1, \lambda_2,\ldots,\lambda_k)$ is called \emph{strict} if all its parts are distinct. With every strict partition, we associate a \emph{shifted diagram}, which is an array with $\lambda_i$ boxes on row $i$, where row $i$ is shifted $k-i$ steps to the right, forming a staircase shape. For any cell $C$ on a shifted diagram, we define its diagonal value to be $diag(C)=col(C)-row(C)+1$. Note that the smallest diagonal value is $1$ and is attained only at the leftmost diagonal which is denoted the \emph{main diagonal} of $\lambda$. 
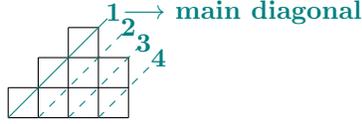
\begin{figure}[ht]
\centering
\begin{tikzpicture}[scale=.4]
%horizontal lines
\draw (0,0) -- (1,0);
\draw (-1,-1) -- (2,-1);
\draw (-2,-2) -- (2,-2);
\draw (-2,-3) -- (2,-3);
%vertical lines
\draw (0,0) -- (0,-3)   (1,0) -- (1,-3);
\draw (-1,-1) -- (-1,-3)  (2,-1) --  (2,-3);
\draw (-2,-2) -- (-2,-3)  ;
%diagonals
\draw[ teal](-2,-3)--(1.3,0.3);
\node[teal, right] at (1.5,0.5) {\textbf{$\longrightarrow$ main diagonal}};
\node[teal] at (1.5,0.5) {$\textbf{1}$};
\draw[dashed, teal](-1,-3)--(1.8,-0.2);
\node[teal] at (2,0) {$\textbf{2}$};
\draw[dashed, teal](0,-3)--(2.3,-0.7);
\node[teal] at (2.5,-0.5) {$\textbf{3}$};
\draw[dashed, teal](1,-3)--(2.8,-1.2);
\node[teal] at (3.0,-1) {$\textbf{4}$};
\end{tikzpicture}
 \caption {The shifted diagram for $\lambda=(4,3,1)$ with diagonals labelled with diagonal values.}
  
\end{figure}

A \emph{semi-standard shifted tableau} of shape $\lambda$ is a filling of its boxes with elements from the marked alphabet $1'<1<2'<2<3'<3<\cdots$ such that each row will be non-decreasing from left to right with no repeated marked numbers, and each column will be non-decreasing from bottom to top with no repeated unmarked numbers. A semi-standard shifted tableau of shape $\lambda$ that contains each of the numbers $1,2,\ldots,|\lambda|$ exactly once (possibly marked), it is called \emph{marked standard}, and if they are all unmarked it is called \emph{standard}. We will denote the set of semi-standard shifted tableaux of shape $\lambda$ by $SSShT(\lambda)$, the set of marked standard ones by $SShT\pm(\lambda)$ and the set of the standard ones by $SShT(\lambda)$. Figure \ref{fig:start} includes the shifted diagram for $\lambda=(4,3,1)$, as well as examples of semi-standard, marked standard and standard shifted tableaux. The tableaux given here are related by a standardization algorithm, which will be introduced in Section \ref{sec:quotients}.

\begin{figure}[ht]
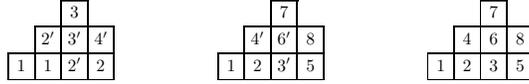

\center
\scalebox{.65}{ \begin{ytableau}
   \none &\none & 3\\
   \none &2' &3' &4' \\
    1& 1&2' &2  \\
  \end{ytableau}\qquad \qquad \qquad  \begin{ytableau}
   \none &\none & 7 \\
   \none &4' &6' &8 \\
    1& 2&3' &5   \\
  \end{ytableau}\qquad \qquad \qquad  \begin{ytableau}
   \none &\none & 7\\
   \none &4 &6 &8 \\
    1& 2&3 &5  \\
  \end{ytableau}}
  \caption {Shifted tableaux examples of shape $\lambda=(4,3,1)$}
  \label{fig:start}
\end{figure}

Schur's Q- and P-functions for a strict partition $\lambda$ are defined as follows:
\begin{eqnarray}
Q_{\lambda}(X)&=&\sum_{S\in SSShT(\lambda)}X^{|S|}\\
P_{\lambda}(X)&=&2^{-ht(\lambda)}\sum_{S\in SSShT(\lambda)}X^{|S|} \quad = \sum_{S\in SSShT^*(\lambda)}X^{|S|}
\end{eqnarray}
where $S\in SSShT^*(\lambda)$ denotes the set of semi-standard tableaux of shape $\lambda$ with no marked entries on the main diagonal, and $X^{|S|}$ is the monomial where the power of $x_i$ is equal to the number of times $i$ or $i'$ occurs in $S$. The semistandard filling in Figure \ref{fig:start}, for example, corresponds to the monomial $x_1^2x_2^3x_3^2x_4$.

The \emph{reading word} of a shifted tableau is, like in the unshifted case,  a reading of all its labels from left to right, top to bottom. These definitions
can be extended to the reading words of marked standard tableaux by
first moving all marked coordinates to the beginning and then
reversing their order and working with the corresponding word, so that Des$(74'6'8123'5)=$Des$(36478125)=\{2,5\}$. 
\begin{lem}\label{lem:7}If $i \in Des(T)$, then $i\in Des(T')$ if and only if $i$ is unmarked in $T'$. If $i \notin Des(T)$, then $i \in Des(T')$ if and only if $i+1$ is marked in $T$.
\end{lem}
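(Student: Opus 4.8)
The plan is to argue entirely at the level of reading words. Write $T$ for the standard shifted tableau underlying $T'$ and let $M$ be the set of entries that are marked in $T'$, so that being unmarked in $T'$ means exactly not belonging to $M$. Let $w=w_1w_2\cdots w_n$ be the reading word of $T$, a permutation of $\{1,\dots,n\}$, so that by definition $i\in\mathrm{Des}(T)$ precisely when $i+1$ occurs to the left of $i$ in $w$. By the convention for marked tableaux recalled just above, $\mathrm{Des}(T')$ is the descent set of the word $v$ obtained from $w$ by deleting the letters lying in $M$, reversing the order in which they occurred, and prepending this reversed list to the letters not in $M$ (which retain their order from $w$). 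Thus $v$ is the concatenation of a \emph{head} --- the letters of $M$ in the reverse of their $w$-order --- and a \emph{tail} --- the letters outside $M$ in their $w$-order --- and $i\in\mathrm{Des}(T')$ precisely when $i+1$ occurs to the left of $i$ in $v$. (It makes no difference to the argument whether one views $T$ and $T'$ as one array read under two conventions, or as a standard tableau together with a chosen marking.)

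The statement now reduces to comparing the relative order of the consecutive values $i$ and $i+1$ in $w$ and in $v$, which I would organize into four cases according to which of $i,i+1$ lie in $M$. If neither lies in $M$, both lie in the tail; since deleting the letters of $M$ does not disturb the relative order of the surviving letters, $i$ and $i+1$ occur in $v$ in the same order as in $w$, so $i\in\mathrm{Des}(T')\iff i\in\mathrm{Des}(T)$. If $i\in M$ and $i+1\notin M$, then $i$ lies in the head and $i+1$ in the tail, so $i$ precedes $i+1$ in $v$ and $i\notin\mathrm{Des}(T')$. If $i\notin M$ and $i+1\in M$, then symmetrically $i+1$ precedes $i$ in $v$ and $i\in\mathrm{Des}(T')$. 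Finally, if both lie in $M$, both lie in the head, where the letters of $M$ appear in the reverse of their $w$-order, so the relative order of $i$ and $i+1$ is reversed and $i\in\mathrm{Des}(T')\iff i\notin\mathrm{Des}(T)$.

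It remains only to collect the cases. If $i\in\mathrm{Des}(T)$, the two cases with $i\notin M$ give $i\in\mathrm{Des}(T')$ and the two with $i\in M$ give $i\notin\mathrm{Des}(T')$, so $i\in\mathrm{Des}(T')$ iff $i$ is unmarked in $T'$; if $i\notin\mathrm{Des}(T)$, the two cases with $i+1\notin M$ give $i\notin\mathrm{Des}(T')$ and the two with $i+1\in M$ give $i\in\mathrm{Des}(T')$, so $i\in\mathrm{Des}(T')$ iff $i+1$ is marked. I do not expect a genuine obstacle: this is routine bookkeeping, and the only delicate point is the reversal built into the reading word of a marked standard tableau, which is precisely what makes the ``both marked'' case flip the descent --- and hence also why the second assertion is governed by $i+1$ rather than by $i$.
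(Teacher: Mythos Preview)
Your argument is correct and is exactly the unpacking that the paper's one-line proof (``This follows directly from the definition of descents on marked tableaux'') leaves to the reader. The four-case analysis on whether $i$ and $i+1$ lie in $M$ is the natural way to see this from the reading-word convention, and your collection of the cases matches the statement; there is nothing to add beyond noting that the paper simply omits these routine details.
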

\begin{proof} This follows directly from the definition of descents on marked tableaux.
\end{proof}

Like in the case of Schur functions, Schur's Q-functions can be expanded in terms of the fundemental quasisymmetric functions:
$$Q_{\lambda}(X)=\sum_{T'\in SShT\pm(\lambda)}F_{Des(T')}(X)$$
For this expansion, we only look at the marked standard tableaux of shape $\lambda$. An expansion that also eliminates the markings and only considers the standard fillings was given by Stembridge\cite{MR1389788}:
$$Q_{\lambda}(X)=\sum_{T\in SShT(\lambda)}2^{|Peak(T)|+1}G_{Peak(T)}(X)$$
Here, the functions $G_{P}$, where $P$ is a subset of $[2,3,..,n-1]$ with no consecutive entries, are the \emph{peak functions} are defined in \cite{MR1389788} by:
$$ G_P(X)=\sum_{\substack{D\in [n-1]\\ Spike(D) \supset P}}F_{D}(X)$$

Given two strict partitions $\mu$ and $\lambda$ with $\mu \subset \lambda$, the \emph{skew-shifted diagram} $\lambda \backslash \mu$ is the diagram for  $\lambda$ with the cells corresponding to the diagram of $\mu$ deleted.

\begin{figure}[ht]
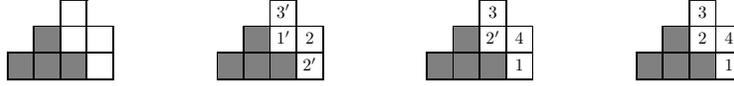

\center
\scalebox{.65}{
     \begin{ytableau}
   \none &\none &  \\
   \none &  *(gray)& & \\
     *(gray) &  *(gray)& *(gray)  & \\
  \end{ytableau}\qquad \qquad \qquad  \begin{ytableau}
   \none &\none & 3'\\
   \none & *(gray) &1' &2 \\
      *(gray) &  *(gray)& *(gray) &2'  \\
  \end{ytableau}\qquad \qquad \qquad  \begin{ytableau}
   \none &\none & 3 \\
   \none & *(gray) &2' &4 \\
     *(gray)&  *(gray)& *(gray) &1   \\
  \end{ytableau}\qquad \qquad \qquad  \begin{ytableau}
   \none &\none & 3\\
   \none & *(gray)&2 &4 \\
    *(gray) &  *(gray)& *(gray) &1  \\
  \end{ytableau}}
  \caption {The skew-shifted diagram $(4,3,1) \backslash (3,1)$ with a corresponding semi-standard, marked standard and standard filling.}
\end{figure}

We can apply the above definitions to the skew-shifted diagrams to get skew-shifted tableaux. More precisely, the set of semi-standard shifted tableaux of shape $\lambda \backslash \mu$, denoted $SSShT(\lambda \backslash \mu)$ is given by all the fillings of $\lambda \backslash \mu$ from the marked alphabet with non-decreasing columns and rows such that we have no unmarked numbers repeated along columns and no marked numbers repeated along rows. We will denote the marked standard fillings of $\lambda \backslash \mu$ (where we use each number from $1$ to $n$ once, possibly marked) by $SShT\pm(\lambda \backslash \mu)$ and its standard fillings  (where we use each number from $1$ to $n$ once, unmarked) by $SShT(\lambda \backslash \mu)$ This gives rise to a skew analogue for Schur's Q-function:
\begin{eqnarray}
Q_{\lambda \backslash \mu}(X)=\sum_{S \in SSShT(\lambda \backslash \mu)} X^{|
S|}=\sum_{T' \in SShT\pm(\lambda \backslash \mu)}F_{Des(T')}(X)=\sum_{T \in SShT(\lambda \backslash \mu)}G_{Peak(T)}(X)
\end{eqnarray}

It was shown by Stembridge that the skew-shifted Q-functions expand positively into Schur's Q-functions:

\begin{thm}\emph{(Stembridge \cite{MR991411})} \label{thm:Stembridge}There exist coefficients $f^{\lambda}_{\mu,\nu} \in \mathbb{N}$ satisfying:
\begin{eqnarray*}
Q_{\lambda \backslash \mu}(X)=\sum_{\nu}f^{\lambda}_{\mu,\nu}Q_{\nu}(X)\qquad Q_{\mu}(X)Q_{\nu}(X)=\sum_{\lambda}f^{\lambda}_{\mu,\nu}Q_{\lambda}(X)
\end{eqnarray*}
where $f^{\lambda}_{\mu,\nu}=0$ unless $|\mu|+|\nu|=|\lambda|$.
\end{thm}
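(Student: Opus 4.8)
The plan is to deduce both identities at once from a single combinatorial engine, shifted jeu de taquin, and to use a short duality argument to reduce the work to a positivity statement for one family of coefficients.

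First I would record the \emph{coproduct formula} for Schur's Q-functions. Splitting the marked alphabet $1'<1<2'<2<\cdots$ into an initial block $1'<1<\cdots<k'<k$ and the complementary block, every $S\in SSShT(\lambda)$ decomposes uniquely: the cells carrying letters from the initial block form a down-set in both the row and column orders, hence a shifted subdiagram $\mu\subseteq\lambda$ on which $S$ restricts to a shifted semi-standard tableau, while the remaining cells carry (after relabelling) a skew shifted tableau of shape $\lambda\backslash\mu$. This is the exact analogue of restricting a Young tableau to an initial segment of values, and it gives $Q_\lambda(X,Y)=\sum_{\mu\subseteq\lambda}Q_\mu(X)\,Q_{\lambda\backslash\mu}(Y)$. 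Since $Q_\lambda$ is symmetric (classical) and the $Q_\mu$ are linearly independent, this forces each $Q_{\lambda\backslash\mu}$ to be a symmetric function; and, using the classical facts that the integral span $\Omega$ of $\{Q_\nu\}$ is a subring of the symmetric functions and that $\{P_\nu\}$, $\{Q_\nu\}$ are dual bases for a suitable bilinear form, the skew function $Q_{\lambda\backslash\mu}$ lies in $\Omega$. Writing $Q_{\lambda\backslash\mu}=\sum_\nu f^{\lambda}_{\mu,\nu}Q_\nu$ with $f^{\lambda}_{\mu,\nu}\in\mathbb{Z}$ (and $f^{\lambda}_{\mu,\nu}=0$ unless $|\mu|+|\nu|=|\lambda|$ for degree reasons), pairing the coproduct formula against $P_\mu\otimes P_\nu$ shows that $f^{\lambda}_{\mu,\nu}$ is also the coefficient of $Q_\lambda$ in $Q_\mu Q_\nu$. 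Thus only the non-negativity of $f^{\lambda}_{\mu,\nu}$ remains.

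For positivity I would introduce shifted jeu de taquin on skew shifted tableaux — inner and outer corner slides, with the extra bookkeeping needed on the main diagonal, where the marked/unmarked conventions interact — and prove the \emph{confluence} statement: every skew shifted tableau has a well-defined rectification, independent of the sequence of inner slides used. Granting confluence, define $f^{\lambda}_{\mu,\nu}$ combinatorially as the number of standard skew shifted tableaux of shape $\lambda\backslash\mu$ whose rectification equals a fixed chosen standard shifted tableau of shape $\nu$, and prove (by a coplactic-invariance argument in the spirit of Haiman's treatment of the unshifted Littlewood--Richardson rule) that this count does not depend on the chosen tableau of shape $\nu$. One then checks that the skew expansion of $Q_{\lambda\backslash\mu}$ is governed by this count, and that after translating $\nu$ to the north-east of $\mu$ so that $\mu\sqcup\nu$ embeds into a skew shifted shape, the product $Q_\mu Q_\nu$ is governed by the same count; this yields $f^{\lambda}_{\mu,\nu}\in\mathbb{N}$ and re-confirms the equality of the two families.

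I expect the genuine obstacle to be exactly the confluence of shifted jeu de taquin. Unlike the unshifted case, slides passing through the main diagonal can create or destroy marks, so a naive induction on the number of cells does not close; the resolution (following Worley and Sagan) is to pass to standard skew shifted tableaux, encode a tableau by its chain of shapes, and carefully analyze the local commutation of two slides in distinct inner corners, the only delicate configurations being those that touch the main diagonal. Once confluence and the choice-independence of the rectification count are in hand, the symmetry $f^{\lambda}_{\mu,\nu}=f^{\lambda}_{\nu,\mu}$ and the consistency of the combinatorial definition follow from the standard ``interchange the roles of $\mu$ and $\nu$'' argument, and the remaining steps — the coproduct identity, the duality bookkeeping, and the reduction of a product to a skew shape — are routine.
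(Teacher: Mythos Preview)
The paper does not prove this theorem at all: it is stated with attribution to Stembridge and cited from \cite{MR991411}, then used as a black box (for instance in the Schur Q-positivity corollary for folded Q-functions). So there is no ``paper's own proof'' to compare against.

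Your outline is, in fact, a faithful sketch of the argument in the cited literature: the coproduct identity $Q_\lambda(X,Y)=\sum_\mu Q_\mu(X)Q_{\lambda\backslash\mu}(Y)$ together with the $P$/$Q$ duality reduces everything to non-negativity of a single family of coefficients, and that non-negativity is established via shifted jeu de taquin and confluence (Worley, Sagan, Stembridge). You have correctly identified the genuine technical content --- confluence of shifted slides through the main diagonal --- and the rest is, as you say, routine. If you were asked to supply a proof for this paper, your plan would be appropriate; but within the paper itself the result is simply quoted.
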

\section{Shifted Ribbon Tableaux}
\subsection{Ribbons on Shifted Diagrams}

On a shifted diagram, we call the columns strictly to the left of the last row its \emph{shifted region}, and the rest its \emph{unshifted region}. Note that the unshifted region uniquely determines the diagram \footnote{We deviate from convention in defining the shifted region, to define hook lengths more easily.}.

The definition of the hook of a cell on a shifted diagram depends on whether the cell falls into the shifted region.
For any cell $C$ in the unshifted region, the \emph{hook} of $C$ is the union of $C$, with the cells above it in its column, and the cells to its right in the row. For a cell in the shifted region, its hook additionally includes the row of cells directly above the highest cell in the column of $C$. The number of cells in its hook is called the \emph{hook length} of $C$.

\begin{figure}[ht]
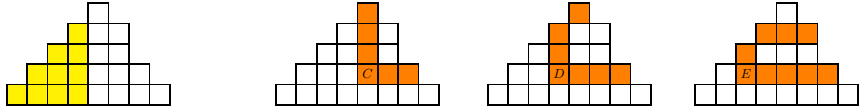

    \centering
    \scalebox{.5}{
\ytableausetup{nosmalltableaux}
\begin{ytableau}
   \none & \none & \none & \none &    \\
   \none &\none &\none & *(yellow) &  & \\
   \none &\none & *(yellow)&*(yellow) & & \\
   \none & *(yellow)& *(yellow)&*(yellow) & & & \\
   *(yellow) & *(yellow)&*(yellow)& *(yellow)& & & &\\
  \end{ytableau}\qquad\qquad\qquad\qquad
  \begin{ytableau}
   \none & \none & \none & \none & *(orange)  \\
   \none &\none &\none &  & *(orange)& \\
   \none &\none & & &*(orange)& \\
   \none & & & &*(orange)C&*(orange)&*(orange)\\
    & & & & & & &\\
  \end{ytableau}  \quad\quad\quad
  \ytableausetup{nosmalltableaux}
  \begin{ytableau}
   \none & \none & \none & \none & *(orange)   \\
   \none &\none &\none &*(orange)  &  & \\
   \none &\none & &*(orange) & & \\
   \none & & & *(orange)D& *(orange)&*(orange) &*(orange) \\
    & & & & & & &\\
  \end{ytableau}\quad\quad\quad
  \ytableausetup{nosmalltableaux}
  \begin{ytableau}
   \none & \none & \none & \none &    \\
   \none &\none &\none &*(orange)  &*(orange)  &*(orange) \\
   \none &\none &*(orange) & & & \\
   \none & & *(orange)E& *(orange)& *(orange)&*(orange) &*(orange) \\
    & & & & & & &\\
  \end{ytableau}}
    \caption{Shifted region of $(8,6,4,3,1)$ and hooks of the cells $C$, $D$ and $E$}
    \label{fig:hooky}
\end{figure}

\begin{defn}We define a \emph{single-ribbon} on  $\lambda$ to be a connected skew-shifted diagram  with each cell on a different diagonal (Equivalently, not containing any $2\times2$ square).
A single ribbon $R$ is \emph{removable} if  $\lambda -R$ is also a shifted diagram.
\end{defn}

Some important notations we will use about ribbons throughout the paper are heads and tails of ribbons. The cell with the highest diagonal value will be called the \emph{head} of $R$, denoted by $H(R)$, and the one with the lowest will be called the \emph{tail} of $R$, denoted $T(R)$. We will also use $|R|$ for the size of a ribbon (the number of cells it contains) and $ht(R)$ for the height of a ribbon (the number of rows of $\lambda$ that it intersects).

Note that for a single-ribbon $R$, $|R|= $diag$(H_R) - $diag$(T_R) +1$

\begin{defn}A \emph{double-ribbon} of size $k$ is a union of two disjoint single-ribbons $R$ and $S$ of sizes $r\geq s$ with $r+s=k$ such that the tail of $R$ is  on the main diagonal of $\lambda$, and the tail of $S$ is on the main diagonal of $\lambda - R$, and their union forms a skew-shifted shape. A double ribbon $Q$ is \emph{removable} if  $\lambda -Q$ is also a shifted diagram.
\end{defn}

The head of $Q=(R,S)$ is the head of $R$, and its tail is the tail of $s$.

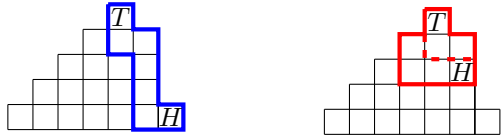
\begin{figure}[ht]
\centering
\begin{tikzpicture}[scale=1/3]

%horizontal lines
\draw (0,0) -- (1,0);
\draw (-1,-1) -- (2,-1);
\draw (-2,-2) -- (2,-2);
\draw (-3,-3) -- (2,-3);
\draw (-4,-4) -- (3,-4);
\draw (-4,-5) -- (3,-5);

%vertical lines
\draw (0,0) -- (0,-5)   (1,0) -- (1,-5);
\draw (-1,-1) -- (-1,-5)  (2,-1) --  (2,-5);
\draw (-2,-2) -- (-2,-5)   (2,-2) -- (2,-5);
\draw (-3,-3) -- (-3,-5)   (2,-3) -- (2,-5);
\draw (-4,-4) -- (-4,-5)   (3,-4)-- (3,-5);

%ribbon 1
 \draw[ultra thick,blue] (0,0)--(1,0)--(1,-1)--(2,-1)--(2,-4)--(3,-4)--(3,-5)--(1,-5)--(1,-2)--(0,-2)--(0,0);
\node at (0.5,-0.5) {$T$};
\node at (2.5,-4.5) {$H$};
\end{tikzpicture}\qquad \qquad \quad \begin{tikzpicture}[scale=1/3]
%horizontal lines
\draw (0,0) -- (1,0);
\draw (-1,-1) -- (2,-1);
\draw (-2,-2) -- (2,-2);
\draw (-3,-3) -- (2,-3);
\draw (-4,-4) -- (3,-4);
\draw (-4,-5) -- (3,-5);

%vertical lines
\draw (0,0) -- (0,-5)   (1,0) -- (1,-5);
\draw (-1,-1) -- (-1,-5)  (2,-1) --  (2,-5);
\draw (-2,-2) -- (-2,-5)   (2,-2) -- (2,-5);
\draw (-3,-3) -- (-3,-5)   (2,-3) -- (2,-5);
\draw (-4,-4) -- (-4,-5)   (3,-4)-- (3,-5);

%ribbon 1
 \draw[ultra thick,red] (0,0) -- (1,0)--(1,-1)--(2,-1)--(2,-3)--(2,-3)--(-1,-3)--(-1,-1)--(0,-1)--(0,0) ;
  \draw[ultra thick,dashed, red] (0,-1)--(0,-2)--(2,-2);
\node at (0.5,-0.5) {$T$};
\node at (1.5,-2.5) {$H$};

\end{tikzpicture}
\caption{A single ribbon(left) and a double ribbon (right)}
\end{figure}

\begin{defn} A \emph{$k$-ribbon} is a single or double ribbon of size $k$.
\end{defn}

\begin{prop}For any removable $k$-ribbon $R$ on $\lambda$, there is no cell on $R$ strictly to the right or strictly below $H(R)$.
\label{prop:head}
\end{prop}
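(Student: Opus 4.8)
The plan is to split the argument by whether $R$ is a single-ribbon or a double-ribbon, and in each case to argue that the defining geometric constraints — connectedness with one cell per diagonal, plus removability ($\lambda - R$ must again be a shifted shape) — force the head to be the extreme bottom-right cell of $R$. First I would treat the single-ribbon case. Here $H(R)$ is the cell of maximum diagonal value, and since each cell of $R$ lies on a distinct diagonal, every other cell $C$ of $R$ satisfies $\mathrm{diag}(C) < \mathrm{diag}(H(R))$. A cell strictly below $H(R)$ in the same column would have the same column index but a strictly larger row index, hence a strictly smaller diagonal value — but I would then observe that removability forces the configuration: if such a cell $C'$ existed below $H(R)$, then $H(R)$ would sit in the interior of $R$'s bounding region in a way that prevents $\lambda - R$ from being a straight shifted shape, because removing $R$ would leave the cell directly below $H(R)$'s column with a ``hole'' above it within what remains. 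The cleanest route is: a removable ribbon, viewed inside $\lambda$, has the property that each of its cells is a corner cell of the successive shifted shapes obtained by peeling, so $H(R)$ — being the last cell added / first removed along the diagonal order — must be an outer corner of $\lambda$, and an outer corner has nothing to its right and nothing below it in $\lambda$, a fortiori nothing in $R$.

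For the double-ribbon case, write $Q = (R,S)$ with $|R| = r \ge s = |S|$, $r+s = k$, where $T(R)$ is on the main diagonal of $\lambda$ and $T(S)$ is on the main diagonal of $\lambda - R$. By definition $H(Q) = H(R)$. I would argue that $H(R)$, being the cell of largest diagonal value in $R$ (and, I should check, in all of $Q$: since $S$ sits ``inside'' $\lambda - R$ along lower diagonals, $\mathrm{diag}(H(S)) < \mathrm{diag}(H(R))$ whenever $r \ge s$, which I would verify using $|R| = \mathrm{diag}(H(R)) - \mathrm{diag}(T(R)) + 1 = \mathrm{diag}(H(R))$ and the analogous identity for $S$ relative to $\lambda - R$). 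Then the same corner argument applies: $\lambda - Q = (\lambda - R) - S$ must be a shifted shape, and peeling $R$ then $S$ exhibits $H(R)$ as an outer corner of $\lambda$. An outer corner cell of a shifted diagram has no cell of $\lambda$ immediately to its right and no cell of $\lambda$ immediately below; since $R \subset \lambda$, there is no cell of $R$ there either, and iterating along the row/column (using that $R$ is a skew-shifted shape, so it cannot re-enter the region right of or below $H(R)$ without creating a cell of $\lambda$ there) gives the full ``strictly to the right or strictly below'' statement.

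The main obstacle I anticipate is making the ``$H(R)$ is an outer corner of $\lambda$'' step fully rigorous, particularly in the double-ribbon case where two peels are involved and one must rule out $S$ contributing a cell below or to the right of $H(R)$. I would handle this by a direct diagonal-value comparison: any cell to the right of $H(R)$ in its row has a strictly larger diagonal value than $H(R)$, and any cell below $H(R)$ in its column, while having a smaller diagonal value, would have to be connected back to $H(R)$ through $R$ — forcing a $2\times 2$ block or a repeated diagonal, contradicting the single-ribbon structure of $R$; for $S$, its cells all have diagonal value at most $\mathrm{diag}(H(S)) < \mathrm{diag}(H(R)) - 1$ (strict because $r \ge s$ and $T(S)$ lies strictly left of $T(R)$'s diagonal in $\lambda$'s indexing after removing $R$), and they occupy columns/rows disjoint from the column and row of $H(R)$ by the nested-shape condition. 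Assembling these comparisons, together with the observation that a shifted diagram contains, for each of its cells, the entire rectangle to the upper-left, yields that nothing of $\lambda$ — hence nothing of $R$ — lies strictly right of or strictly below $H(R)$, completing the proof.
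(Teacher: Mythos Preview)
Your outline is substantially more complicated than the paper's argument, and it contains a couple of genuine errors. The paper's proof is three sentences and makes no case distinction between single and double ribbons: if $R$ contained a cell strictly to the right of $H(R)$, then (since $\lambda\setminus R$ is again a shifted diagram) $R$ would also have to contain the cell immediately to the right of $H(R)$ in the same row, and that cell has diagonal value $\mathrm{diag}(H(R))+1$, contradicting the maximality of $\mathrm{diag}(H(R))$. The ``below'' direction is handled symmetrically. The only facts used are that $H(R)$ has maximal diagonal value among cells of $R$ and that $\lambda\setminus R$ is shifted, both of which hold by definition for single and double ribbons alike---so your split into two cases, and the entire discussion of how $S$ sits relative to $H(R)$ in the double case, is unnecessary.

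More seriously, two of your claims are incorrect. First, your outer-corner step asserts that an outer corner of $\lambda$ ``has nothing to its right and nothing below it in $\lambda$''; but an outer corner has nothing to its right in its row and nothing \emph{above} it in its column---it typically does have cells of $\lambda$ directly below it, so the ``a fortiori nothing in $R$'' deduction fails for the below direction. Second, your diagonal computation is backwards: in the paper's convention $\mathrm{diag}(C)=\mathrm{col}(C)-\mathrm{row}(C)+1$ with diagonals increasing toward the bottom-right, so a cell in the same column and physically below $H(R)$ has \emph{larger} diagonal value than $H(R)$, not smaller. That observation already gives an immediate contradiction for any cell of $R$ in the same column below $H(R)$, with no need for the connectivity/$2\times 2$ argument you sketch. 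Once this sign is fixed, the clean route is exactly the paper's: use removability of $R$ to produce a cell of $R$ adjacent to $H(R)$ with strictly larger diagonal value.
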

\begin{proof} As $\lambda \backslash R$ is also a shifted diagram, if $R$ includes a cell strictly to the right of $H(R)$, it will also contain the cell to the right of $\lambda$ in the same row. Similarly, if $R$ has a cell below $H(R)$, it will also include the cell below $\lambda$ in the same column. In both cases, $R$ has a cell with a higher diagonal value than $H(R)$, giving us a contradiction.
\end{proof}

\begin{prop}A shifted diagram $\lambda$ has a removable single $k$-ribbon with $diag(H_R)=m$ if and only if it has a part of size $m+k$ and no part of size $m$. If it exists, it is the unique ribbon $R$ where $\lambda-R$ has the part $m+k$ replaced with $m$. Furthermore, $\lambda$ has a removable double $k$-ribbon with $diag(H_R)=a$ if and only if it has two parts of size $a$ and $k-a<a$, the ribbon being the unique $R$ where $\lambda-R$ is $\lambda$ with the two parts removed. \label{prop:remove}
\end{prop}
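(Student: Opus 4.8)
The plan is to analyze what a removable $k$-ribbon looks like in terms of the parts of $\lambda$, using Proposition \ref{prop:head} as the starting point. First I would treat the single-ribbon case. Suppose $R$ is a removable single $k$-ribbon with $\mathrm{diag}(H_R)=m$. By Proposition \ref{prop:head}, $H(R)$ has nothing strictly to its right or strictly below it on $R$, so $H(R)$ is the rightmost cell of some row $i$ of $\lambda$, and since $R$ is connected and lives one cell per diagonal, $R$ must be exactly the set of cells with diagonal values $m-k+1, m-k+2,\dots,m$; I would show that the connectedness plus the ``staircase'' constraint (each row of $\lambda$ strictly longer than the next) forces $R$ to occupy a contiguous hook-like strip ending at the end of row $i$, and that removing it leaves a shifted diagram precisely when the resulting row lengths are still strictly decreasing, which happens iff row $i$ had length (i.e.\ $\lambda_i$, which equals $m$ since the last cell is on diagonal $m$ — here I need to be careful about the shifted-region convention and how diagonal values relate to $\lambda_i$) equal to $m+k$ as a diagonal value of its head, and there is no other row of $\lambda$ whose rightmost cell sits on diagonal $m$, i.e.\ no part of ``size'' $m$. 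I would phrase ``size $m$'' consistently as: the part whose row's rightmost cell lies on diagonal $m$. The uniqueness and the effect ``$m+k \mapsto m$'' on the multiset of parts then follow by direct inspection.

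Next I would handle the double-ribbon case. If $Q=(R,S)$ is a removable double $k$-ribbon with $\mathrm{diag}(H_Q)=a$, then by definition the tail of $R$ is on the main diagonal (diagonal $1$), so $R$ consists of the cells on diagonals $1,2,\dots,a$ of $\lambda$, hence $R$ is a removable single ribbon of size $a$ whose head is at the end of some row — by the single-ribbon analysis this row has ``size'' $a$. After removing $R$, the tail of $S$ is on the main diagonal of $\lambda-R$, so $S$ occupies diagonals $1,\dots,k-a$ of $\lambda-R$, and $|S|=s=k-a$; since $s\le r$ we get $k-a\le a$, i.e.\ $a\ge k/2$, matching ``$k-a<a$'' (I should double-check whether the strict inequality $k-a<a$ versus $\le$ is what actually occurs, and whether $k$ even vs.\ odd matters — if $k$ is even and $a=k/2$ then $R$ and $S$ would both have size $k/2$ and $T(S)$ would sit on the same diagonal structure; I expect the convention in the definition forces $r\ge s$ and this edge case is either allowed with $r=s$ or excluded, and the statement as written says $k-a<a$, so I would verify the boundary behavior carefully). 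Removing $S$ as well must leave a shifted diagram; combining, I would show the net effect on parts is to delete two parts of sizes $a$ and $k-a$, and conversely if $\lambda$ has parts of sizes $a$ and $k-a$ with $k-a<a$, peeling diagonals $1,\dots,a$ and then $1,\dots,k-a$ gives a valid removable double ribbon, which is forced to be unique.

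The main obstacle I anticipate is bookkeeping the interaction between the ``shifted region'' convention (which the paper flags as nonstandard, in the footnote, precisely to make hook lengths work) and the translation between diagonal values of ribbon heads and the actual parts $\lambda_i$: I need the exact statement ``$\lambda$ has a part of size $m$'' to mean ``some row of $\lambda$ ends on diagonal $m$,'' and I must make sure that removing a ribbon that ends a given row genuinely shortens that row to end on diagonal $m-k$, while not disturbing the strict-decrease condition except through that one row (or, in the double case, through the two affected rows, with the added subtlety that the shorter ribbon $S$ may pass through the row vacated by $R$). A clean way to organize this is to work entirely with the multiset of diagonal values of the rightmost cells of $\lambda$ (equivalently, the parts under this convention) and observe: removing a removable single $k$-ribbon with head on diagonal $m$ replaces the value $m$ in that multiset by $m-k$, and this is legal (yields a strict partition / valid shifted shape) iff $m$ is present and $m-k$ is not, with the additional automatic fact $m-k\ge 0$ — then translate ``value $m$ present'' back to ``part of size $m$.'' I would also remark that the double-ribbon case reduces cleanly to two applications of the single-ribbon case since by construction $R$ is removed first and $S$ is then a removable single ribbon of $\lambda-R$ whose head is on the main diagonal of $\lambda - R$, making the whole argument modular.
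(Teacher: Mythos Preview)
Your approach is essentially the paper's own: locate $H(R)$ at the end of a row (via Proposition~\ref{prop:head}), identify the ribbon with the outermost cells on a run of consecutive diagonals, read off the affected part from the diagonal value of $H(R)$, and reduce the double-ribbon case to two successive single-ribbon removals. The paper's proof is just a terser version of exactly this.

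Two remarks on the points you flagged. First, your confusion about matching ``$\lambda_i=m$'' to ``part of size $m{+}k$'' is not a subtlety of the shifted-region convention---it is a genuine typo in the statement. The proof in the paper itself says ``$H_R$ is at the end of a row, and has diagonal value $m{+}k$,'' and Theorem~\ref{cor:ribboncorrespondence} likewise has $\mathrm{diag}(H_A)=m{+}k$; so the intended hypothesis is $\mathrm{diag}(H_R)=m{+}k$, and under the convention $\mathrm{diag}(\text{rightmost cell of row }i)=\lambda_i$ everything lines up. Second, your worry about $k-a<a$ versus $k-a\le a$ resolves immediately: a strict partition has distinct parts, so one cannot have two parts both equal to $k/2$; hence $r>s$ strictly, which is exactly the corollary the paper states right after the proof.
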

\begin{proof} If $\lambda$ has a part of size $m+k$ and no part of size $m$, then removing the outermost cells from diagonals $m+k$ to $m+1$ gives us a ribbon of size $k$, with $diag(H_R)=m$. As the ribbon is removable,  $\lambda \backslash R$ is itself a skew-shifted diagram, which means there is no cell on $\lambda$ above $T_R$ or to the left of $H_R$, and $R$ contains the outermost cell of each diagonal from $m+k$ to $m+1$.
As $H_R$ is at the end of a row, and has diagonal value $m+k$, $\lambda$ contains a part of size $m+k$. Similarly, $T_R$ is on a row with at least $m+1$ cells, and as the row above has no cell above $T_R$ it has less than $m$ cells. The case for the double ribbon follows as the double ribbon is a union of two single ribbons, and a ribbon of size $a$ with $diag(H_R)=a$ will have its tail on the main diagonal.
\end{proof}

A corollary of this proposition is that no shape can have a double ribbon of size $(t,t)$.

\begin{thm} A shifted diagram $\lambda$ admits no removable $k$-ribbon iff it has no cells with hook length equal to $k$. 
\end{thm}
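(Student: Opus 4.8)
The plan is to prove the statement by establishing a bijection-style correspondence between cells of hook length $k$ and removable $k$-ribbons, working separately in the unshifted and shifted regions. By Proposition \ref{prop:remove}, removable $k$-ribbons come in two flavors — single ribbons, controlled by a pair (part of size $m+k$, no part of size $m$), and double ribbons, controlled by a pair of parts of sizes $a$ and $k-a$ with $k-a<a$ — so the task reduces to matching each of these combinatorial conditions to the existence of a cell of hook length $k$ in the appropriate region.

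First I would handle the unshifted region. Fix a cell $C$ in the unshifted region; its hook (in the sense defined before Figure \ref{fig:hooky}) is the ordinary arm-plus-leg hook. I would compute the hook length of $C$ in terms of the parts of $\lambda$: if $C$ sits at the right end of the relevant arm, its hook length records the ``gap'' between two parts. Concretely, I would argue that $\lambda$ has a cell of hook length $k$ in the unshifted region if and only if there is an index with $\lambda_i - \lambda_j = k$ for suitable $i<j$ reading down the column of $C$ (or with a part of size $k$ sitting above an empty column, i.e. $\lambda_i = k$ with nothing below in that column). This is exactly the condition ``$\lambda$ has a part of size $m+k$ and no part of size $m$'' from Proposition \ref{prop:remove} (taking $m=\lambda_j$ or $m=0$), so cells of hook length $k$ in the unshifted region correspond precisely to removable single $k$-ribbons. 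The standard James–Kerber hook/ribbon correspondence for unshifted diagrams is the model here; the only care needed is bookkeeping with the staircase offsets, but since diagonal values are preserved the argument goes through verbatim.

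Next I would treat the shifted region, where the hook of a cell $C$ additionally wraps around to include the row directly above the top of $C$'s column (again see Figure \ref{fig:hooky}). Here I expect the hook length of a cell $C$ on diagonal $d$ in the shifted region to record, not a difference of two parts, but a sum: the extra ``folded'' row contributes the length of a higher part, so the hook length of the bottom-left-most relevant cell equals $\lambda_i + \lambda_j$ for the two parts $i,j$ whose rows the hook passes through — matching the double-ribbon condition ``$\lambda$ has two parts of sizes $a$ and $k-a$'' with $a + (k-a) = k$. Conversely, given two parts summing to $k$, I would locate the unique cell in the shifted region whose folded hook has length exactly $k$. Combined with the first case, this shows: a cell of hook length $k$ exists $\iff$ a removable single or double $k$-ribbon exists $\iff$ $\lambda$ admits a removable $k$-ribbon, which is the claim. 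I would also note the boundary/degenerate configurations (a single part equal to $k$; the main-diagonal cell; the excluded case of a $(t,t)$ double ribbon, which forces equal parts and hence is not a valid shifted shape after removal) to make sure the biconditional has no leaks.

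The main obstacle I anticipate is the shifted-region hook-length computation: pinning down exactly which cell's folded hook has length $k$ and verifying that the ``wrap-around row'' contributes precisely the length of the correct higher part, including the edge cases where $C$'s column reaches the main diagonal or where the two parts are consecutive in $\lambda$. Getting the off-by-one accounting right for the folded portion of the hook — and confirming it lines up with $diag(H_R)$ and $diag(T_R)$ as used in Propositions \ref{prop:head} and \ref{prop:remove} — is the delicate part; the rest is a routine translation between parts of $\lambda$, hook lengths, and the ribbon-removal conditions already proved.
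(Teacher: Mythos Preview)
Your proposal is correct and follows essentially the same approach as the paper: both establish a bijection between cells of hook length $k$ and removable $k$-ribbons, split according to whether the cell lies in the unshifted region (giving single ribbons) or the shifted region (giving double ribbons), and both lean on Proposition~\ref{prop:remove} to translate the ribbon conditions into conditions on the parts of $\lambda$. The only cosmetic difference is that for the unshifted case the paper constructs the ribbon geometrically (take the outermost cell on each diagonal the hook passes through, so the endpoints of the hook are $H(R)$ and $T(R)$), whereas you route through the arithmetic of part-differences; these are equivalent formulations of the same correspondence.
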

In this case, we call $\lambda$ a \emph{$k$-core}.

\begin{proof}We claim that there is a bijection between removable $k$-ribbons and cells with hook length equal to $k$, where cells in the unshifted part correspond to single ribbons and cells in the shifted part correspond to double ribbons. Under this bijection it is clear that if a diagram admits no $k$ ribbons, it can not have a cell with hook length $k$ and vice versa. 

Let $C$ have hook length equal to $k$. First let us look at the case when $C$ is in the unshifted part. Let $R$ be the single $k$-ribbon consisting of the outermost cell on each diagonal that the hook of $C$ passes. As the head and tail of $R$ are the endpoints of the hook of $C$, $R$ is removable.  Conversely, if $R$ is a single $k$-ribbon, the cell on the row of $H(R)$ and the column of $T(R)$ has hook length $k$ and is on the unshifted part.

Now let us assume $C$ is a cell in the shifted part, so that its hook includes the row above the column of $C$. Assume this row has length $t$. This means, $C$ is on a row of size $t-k$, and by Proposition \ref{prop:head} the shape has a unique removable double ribbon of size $(t,k-t)$. Conversely, if $R$ is a removable double ribbon of size $(t,k-t)$ with $t<t-k$, the diagram has a rows $i$ and $j$ with sizes $t$ and $t-k$. The cell on  row $j$ and column right below row $i$ falls on the shifted part and has hook length $k$.
\end{proof}

\subsection{The $k$-Abacus Correspondence}

In this section, we will show that the ribbons on shifted tableaux can be expressed using the $k$-abacus notation of James and Kerber\cite{MR644144}.
A $k$-abacus consists of runners labeled by $1, 2, 3, \ldots,k$, and numbers placed on these runners as follows:
\begin{center}
\begin{tabular}{ l l l l l }
  1 & 2 & 3 & $\cdots$ & k\\
  \hline
1 & 2 & 3 & $\cdots$ & k\\
k+1 & k+2 & k+3 & $\cdots$ & 2k\\
2k+1 & 2k+2 & 2k+3 & $\cdots$ & 3k\\
$\cdots$ & $\cdots$ & $\cdots$ & $\cdots$ &$\cdots$ \\
\end{tabular}
\end{center}

Two runners $i$ and $j$ are called \emph{$k$-conjugate} if $i+j=k$. To any shifted diagram $\lambda =(\lambda_1,\lambda_2,\ldots,\lambda_n)$ we will associate the $k$-abacus with beads on positions $\lambda_1,\lambda_2,\ldots,\lambda_n$.
For example, for the diagram $\lambda=(16, 11, 10, 9, 8, 7, 4, 3, 1)$ we get the $5$-abacus:
\begin{center}
\begin{tabular}{ l l l l l }
  $a_1$ & $a_2$ & $a_3$ & $a_4$ & $a_5$\\
  \hline
$\bullet$ & $\circ$ & $\bullet$ & $\bullet$ & $\circ$\\
$\circ$ & $\bullet$ & $\bullet$ & $\bullet$  & $\bullet$\\
$\bullet$  & $\circ$ & $\circ$ & $\circ$ & $\circ$\\
$\bullet$ & $\circ$ & $\circ$ & $\circ$ & $\circ$\\
$\circ$ & $\circ$ & $\circ$ & $\circ$ & $\circ$\\
$\cdots$  & $\cdots$  &$\cdots$  & $\cdots$ &$\cdots$ \\
\end{tabular}
\end{center}

Given a strict partition $\lambda$, we identify each runner $a_i$ in its abacus with a shifted shape $\alpha^{(i)}$, by treating the runners as the abacus of a shifted $1$-core. More precisely, $\alpha^{(i)}$ will be the shifted shape $(\alpha^{(i)}_1,\alpha^{(i)}_2,\ldots,\alpha^{(i)}_t)$ where $k(\alpha^{(i)}_1-1)+i,\ldots,k(\alpha^{(i)}_t-1)+i$ are the parts of $\lambda$ that are equal to  $i$ modulo $k$. We will call the $k$-tuple $(\alpha^{(1)},\alpha^{(2)},\ldots,\alpha^{(n)})$  the abacus representation $\lambda$. The $5$-abacus representation of the example $(16, 11, 10, 9, 8, 7, 4, 3, 1)$ with the abacus above is $(431,2,21,21,2)$.

There are two types of moves allowed on the $k$-abacus of $\lambda$\cite{MR1264418}:
\begin{itemize}
\item  \emph{Type I Move:} Sliding one bead one position higher in its runner if that position is unoccupied, or removing a bead from the top row of column $k$
\item \emph{Type II Move:} Removing two beads from the first row, if they are on two conjugate runners.
\end{itemize}

After a move on the $k$-abacus, we get a new shifted diagram $\lambda* \subset \lambda$. A \emph{Type I} move corresponds to replacing a part of size $m+k$ with one of size $m$, whereas a \emph{Type II} move corresponds to removing two parts of sizes adding up to $k$. By Proposition \ref{prop:remove}, we have the following correspondence:

\begin{thm}\label{cor:ribboncorrespondence} Making a move on the $k$-abacus of $\lambda$ is equivalent to removing a $k$-ribbon from $\lambda$. In particular,
\begin{enumerate}
\item \emph{Single-Ribbon Correspondence:} Making the Type I move from position $m+k$ to position $m$ is equivalent to removing a single-ribbon A with diag$(H_A)=m+k$ and diag$(T_A)=m+1$ (where the case $m=0$ is removing a bead from the top row of column $k$) .
\item \emph{Double-Ribbon Correspondence:} Making the Type II move removing top beads $t$ and $k-t$ from conjugate runners $a_t$ and $a_{k-t}$ equivalent to removing a double-ribbon of size $(t,k-t)$.
\end{enumerate}
\end{thm}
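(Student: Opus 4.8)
The plan is to establish Theorem \ref{cor:ribboncorrespondence} by reducing both assertions to Proposition \ref{prop:remove}, which has already done the combinatorial work of identifying which $k$-ribbons are removable from $\lambda$ in terms of the parts of $\lambda$. The bridge we need is purely a dictionary between abacus moves and part operations: a bead on the $k$-abacus in the position corresponding to the integer $p$ sits on runner $r$ where $p \equiv r \pmod k$ (with $r \in \{1,\dots,k\}$), at height $\lceil p/k \rceil$; sliding it up one row replaces the part $p$ by $p - k$, and it is legal precisely when $p-k$ is not already a part, i.e. not already a bead. Removing a bead from the top of runner $k$ is the degenerate case $p = k$, which replaces the part $k$ by the part $0$, i.e. deletes it. A Type II move deletes two beads from the first row on conjugate runners $t$ and $k-t$, i.e. deletes the two parts $t$ and $k-t$ (and legality requires both to be parts, with $t \ne k-t$ since a runner holds at most one bead per row).

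First I would state and prove this dictionary explicitly as the body of the proof: observe that beads of the abacus of $\lambda$ are in bijection with the parts $\lambda_i$, that the runner/height coordinates recover $p$ via $p = k(\text{height}-1) + (\text{runner})$, and hence that the two listed move types correspond exactly to the operations ``replace a part $m+k$ by $m$ (where no part $m$ was present), possibly with $m=0$ meaning deletion'' and ``delete two parts summing to $k$.'' Then I would invoke Proposition \ref{prop:remove} directly: it says $\lambda$ has a removable single $k$-ribbon with $\mathrm{diag}(H_R)=m$ iff $\lambda$ has a part $m+k$ and no part $m$, and that removing it replaces the part $m+k$ by $m$; this is exactly the Type I move from position $m+k$ to $m$, and it matches item (1) once we note $\mathrm{diag}(H_R) = m+k$ and $\mathrm{diag}(T_R) = m+1$ follow from $|R| = \mathrm{diag}(H_R) - \mathrm{diag}(T_R) + 1 = k$ together with $T_R$ being on the main diagonal of $\lambda - R$ only in the boundary case — more simply, $\mathrm{diag}(H_R)$ being at the end of the row of length $m+k$ gives diagonal value $m+k$, and the remaining $k-1$ cells occupy the successive diagonals down to $m+1$. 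Similarly, the double-ribbon clause of Proposition \ref{prop:remove} (removable double $k$-ribbon with head diagonal $a$ iff there are parts $a$ and $k-a < a$, removal deleting both) is exactly the Type II move on conjugate runners, giving item (2).

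The main subtlety — and the only place requiring care — is the boundary/degenerate cases: the $m=0$ Type I move (removing a bead from the top of runner $k$), which corresponds to a ribbon whose tail lies on the diagonal of value $1$, i.e. a ribbon anchored on the main diagonal with $\mathrm{diag}(H_R) = k$; and making sure the conjugacy condition $i+j=k$ in a Type II move is consistent with $t \ne k-t$ (no runner is self-conjugate when we would need two distinct beads, so there is genuinely no $(t,t)$ double ribbon, matching the corollary to Proposition \ref{prop:remove}). I would also note that Type I moves with $m \ge 1$ never touch the main diagonal structure in a way that conflicts with the single-ribbon being ``single'' rather than ``double,'' since a single ribbon by definition has all cells on distinct diagonals and its head diagonal $m+k \ge 1$. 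Once these cases are checked against the statements of Propositions \ref{prop:head} and \ref{prop:remove}, the theorem follows with essentially no further computation. I expect the abacus-to-parts dictionary verification (especially the $m=0$ edge case and confirming a legal Type I slide corresponds precisely to ``$m$ is not already a part'') to be the one spot where I would be most careful, though it is routine rather than hard.
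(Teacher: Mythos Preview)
Your proposal is correct and follows essentially the same approach as the paper: the paper's entire argument is the sentence preceding the theorem, which observes that a Type I move replaces a part $m+k$ by $m$ and a Type II move deletes two parts summing to $k$, then invokes Proposition~\ref{prop:remove}. Your write-up is a more explicit and careful version of exactly this reduction, with the edge cases ($m=0$, $t \ne k-t$) spelled out rather than left implicit.
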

This theorem implies that the $k$-core of a shifted diagram corresponds to the $k$-core of the abacus. As the final state of the abacus is independent of the order of the moves \cite{MR644144}, we have the following corollary.
\begin{cor}The $k$-core of a shifted diagram is unique.
\end{cor}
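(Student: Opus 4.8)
The statement to prove is the Corollary: the $k$-core of a shifted diagram is unique.

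The plan is to reduce the uniqueness of the $k$-core of a shifted diagram to the corresponding statement for the $k$-abacus, which is already known from James and Kerber \cite{MR644144}. By Theorem \ref{cor:ribboncorrespondence}, removing a $k$-ribbon from $\lambda$ corresponds exactly to performing a Type I or Type II move on the $k$-abacus of $\lambda$, and conversely. Iterating this, any sequence of $k$-ribbon removals taking $\lambda$ down to a $k$-core $\lambda_0$ corresponds to a sequence of abacus moves taking the abacus of $\lambda$ down to a state with no legal move; by the theorem characterizing $k$-cores, this terminal abacus is precisely the $k$-core of the abacus. Since no further ribbon can be removed from $\lambda_0$, its abacus admits no move, so $\lambda_0$ is determined by the terminal abacus.

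The key steps, in order, are: first, observe that a shifted diagram with no removable $k$-ribbon corresponds under the abacus bijection to an abacus on which neither a Type I nor a Type II move is possible (this is immediate from Theorem \ref{cor:ribboncorrespondence}); second, recall from \cite{MR644144} that the terminal configuration of the $k$-abacus — the one reached by pushing all beads as high as possible within each runner and removing conjugate pairs from the top row — is independent of the order in which the moves are performed; third, conclude that if $\lambda_0$ and $\lambda_0'$ are both $k$-cores obtained from $\lambda$ by ribbon removals, then their abaci are both equal to this unique terminal abacus, hence $\lambda_0 = \lambda_0'$ since a shifted diagram is recovered from its set of beads. One should also note the mild subtlety that \cite{MR644144} works with the ordinary abacus (abacus of an unshifted partition), whereas here the Type II move is extra; but the confluence argument still applies because the runners are treated independently for Type I moves and the Type II move only ever acts on the top row of two conjugate runners, so the standard diamond-lemma style argument goes through.

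The main obstacle is making precise the independence-of-order claim in the presence of the Type II move: one must check that Type I and Type II moves commute in the appropriate local sense (or that the reduction system is confluent), so that the terminal abacus is genuinely well-defined. Concretely, a Type I move on a runner not involved in a given Type II move obviously commutes with it, and a Type I move on runner $a_t$ (lifting a bead to the top row) followed by a Type II move on $\{a_t, a_{k-t}\}$ can be analyzed by case-checking against the reverse order; the leftover bookkeeping is routine. Once confluence is established, uniqueness of the $k$-core of the shifted diagram follows formally from uniqueness of the terminal abacus together with the bijective dictionary of Theorem \ref{cor:ribboncorrespondence}.
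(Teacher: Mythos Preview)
Your proposal is correct and follows essentially the same approach as the paper: reduce to the abacus via Theorem~\ref{cor:ribboncorrespondence} and invoke the order-independence of the terminal abacus configuration. The paper's own argument is a single sentence citing \cite{MR644144} for this independence; your additional care in flagging that the Type~II move is not present in the unshifted James--Kerber setting, and in sketching the confluence check needed to cover it, is more than the paper itself provides.
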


\subsection{Shifted Ribbon Tableaux}

\begin{defn} A \emph{standard k-ribbon tableau} of shape $\lambda$ is a sequence of shifted diagrams $\lambda^{(0)} \subset \lambda^{(1)} \subset \cdots  \subset \lambda^{(n)}=\lambda$, where each $A_i=\lambda^{(i)} \backslash \lambda^{(i-1)}$, $i=1,2..n$ is a k-ribbon, and $\lambda^{(0)}$ is a k-core. For each $i$, we label ribbon $A_i$ with	 $i$. 
\end{defn}

\begin{figure}[ht]
\centering
\begin{tikzpicture}[scale=1/3]
\draw[thick] (0,0)--(1,0)--(1,-1)--(2,-1)--(2,-4)--(1,-4)--(1,-2)--(0,-2)--(0,0) ;

\draw[thick] (0,-2)--(0,-5)--(3,-5)--(3,-4)--(2,-4);

\draw[thick] (-2,-3)--(-3,-3)--(-3,-4)--(-4,-4)--(-4,-5)--(0,-5);

\draw[thick] (0,-1)--(-1,-1)--(-1,-2)--(-2,-2)--(-2,-3)--(-2,-4)--(0,-4);
\node at (1,-1.5) {4};
\node    at (0.5,-4.5) {3};
\node   at (-1,-3) {2};
\node  at (-2.5,-4.5) {1};
\end{tikzpicture}
\caption{A 5-Ribbon Tableau with no 5-core of shape $\lambda=(7,5,4,3,1)$ }
\end{figure}
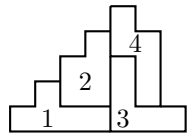

\begin{defn} A skew-shifted diagram $S$ on a shifted diagram $\lambda$ is called a \emph{horizontal $k$-ribbon strip} (resp. \emph{vertical $k$-ribbon strip}) if there exists a sequence of shifted diagrams  $\lambda^{(0)} \subset \lambda^{(1)} \subset \cdots  \subset \lambda^{(t)}=\lambda$, where:
\begin{enumerate}
    \item Each $R_i:=\lambda^{(i)}\setminus\lambda^{(i-1)}$ is a $k$-ribbon.
    \item $H(R_i)$ is strictly to the right of (resp. strictly above) $H(R_{i-1})$ for each $i$.
    \item $S= \bigcup_{i=1}^n R_{i} = \lambda \setminus \lambda^{(0)}$.  
\end{enumerate}
\end{defn}

\begin{figure}[H]
    \begin{tikzpicture}[scale=3/7]
    \draw (0,1)--(3,1)--(3,2)--(0,2)--(0,1);
    \draw (3,2)--(4,2)--(4,1)--(5,1)--(5,0)--(3,0)--(3,1);
    \node at (2.5,1.5) {H};
    \node at (4.5,0.5) {H};
    \node[teal] at (3,-0.5) {\Checkmark};
    \end{tikzpicture}\qquad
        \begin{tikzpicture}[scale=3/7]
    \draw (0,0)--(3,0)--(3,1)--(0,1)--(0,0);
    \draw (3,1)--(3,2)--(4,2)--(4,1)--(5,1)--(5,0)--(3,0);
    \node at (2.5,0.5) {H};
    \node at (4.5,0.5) {H};
    \node[teal] at (3,-0.5) {\Checkmark};
    \end{tikzpicture}\qquad
            \begin{tikzpicture}[scale=3/7]
    \draw (0,0)--(3,0)--(3,1)--(0,1)--(0,0);
    \draw (2,1)--(2,2)--(4,2)--(4,0)--(3,0);
    \node at (2.5,0.5) {H};
    \node at (3.5,0.5) {H};
    \node[teal] at (2.5,-0.5) {\Checkmark};
    \end{tikzpicture}\qquad \quad \qquad
        \begin{tikzpicture}[scale=3/7]
    \draw (0,0)--(3,0)--(3,1)--(0,1)--(0,0);
    \draw (0,1)--(0,2)--(3,2)--(3,1);
    \node at (2.5,0.5) {H};
    \node at (2.5,1.5) {H};
    \node[red] at (2,-0.5) {\XSolidBrush};
    \end{tikzpicture}\qquad
        \begin{tikzpicture}[scale=3/7]
    \draw (0,0)--(3,0)--(3,1)--(0,1)--(0,0);
    \draw (0,1)--(-1,1)--(-1,2)--(2,2)--(2,1);
    \node at (2.5,0.5) {H};
    \node at (1.5,1.5) {H};
    \node[red] at (2,-0.5) {\XSolidBrush};
    \end{tikzpicture}\qquad
    \begin{tikzpicture}[scale=3/7]
    \draw (0,0)--(3,0)--(3,1)--(0,1)--(0,0);
    \draw (0,1)--(0,3)--(1,3)--(1,2)--(2,2)--(2,1);
    \node at (2.5,0.5) {H};
    \node at (1.5,1.5) {H};
    \node[red] at (1,-0.5) {\XSolidBrush};
    \end{tikzpicture}
    \caption{Horizontal strip examples and non-examples}
\end{figure}
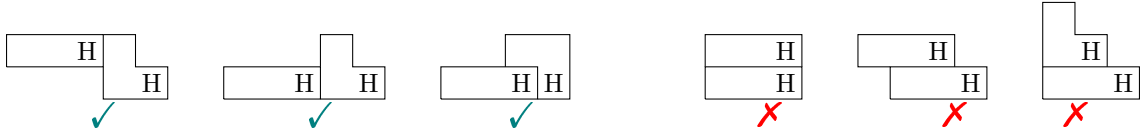

\begin{defn} A \emph{semi-standard shifted $k$-ribbon tableau} is given by a sequence $\lambda^{(0)} \subset \lambda^{(1')} \subset \lambda^{(1)} \subset \lambda^{(2')}\subset \lambda^{(2)} \subset \cdots  \subset \lambda^{(n)}=\lambda$, where:
\begin{itemize}
    \item $\lambda^{(0)}$ is the $k$-core of $\lambda$.
    \item Each  $\lambda^{(i)} \setminus \lambda^{(i')}$ is a (possibly empty) horizontal $k$-strip. 
    \item Each  $\lambda^{(i')} \setminus\lambda^{(i-1)}$ is a (possibly empty) vertical $k$-strip.
\end{itemize} We number the ribbons on the strip $\lambda^{(i)} \setminus\lambda^{(i')}$ by $i$ and the ribbons forming the strip $\lambda^{(i')} \setminus\lambda^{(i-1)}$ by $i'$ for each $i=1,2,\ldots,n$. 
\end{defn}

\begin{figure}[H]
\scalebox{.7}{
    \begin{tikzpicture}[scale=1/2]
\draw (0,0)--(1,0)--(1,-2)--(2,-2)--(2,-4)--(-3,-4)--(-3,-3)--(-2,-3)--(-2,-2)--(-1,-2)--(-1,-1)--(0,-1)--(0,0);
\draw (1,-2)--(-1,-2)--(-1,-4) (-1,-3)--(2,-3);
\node  at (0.5,-1.5) {$2'$};
\node  at (1.5,-2.5) {$2'$};
\node  at (1.5,-3.5) {1};
\node  at (-1.5,-3.5) {$1'$};
\end{tikzpicture} \quad
 \begin{tikzpicture}[scale=1/2]
\draw (0,0)--(1,0)--(1,-2)--(2,-2)--(2,-4)--(-3,-4)--(-3,-3)--(-2,-3)--(-2,-2)--(-1,-2)--(-1,-1)--(0,-1)--(0,0);
\draw (1,-2)--(-1,-2)--(-1,-4) (-1,-3)--(2,-3);
\node  at (0.5,-1.5) {2};
\node  at (1.5,-2.5) {$2'$};
\node  at (1.5,-3.5) {1};
\node  at (-1.5,-3.5) {$1'$};
\end{tikzpicture}\quad
\begin{tikzpicture}[scale=1/2]
\draw (0,0)--(1,0)--(1,-2)--(2,-2)--(2,-4)--(-3,-4)--(-3,-3)--(-2,-3)--(-2,-2)--(-1,-2)--(-1,-1)--(0,-1)--(0,0);
\draw (1,-2)--(-1,-2)--(-1,-4) (-1,-3)--(2,-3);
\node  at (0.5,-1.5) {$2'$};
\node  at (1.5,-2.5) {$2'$};
\node  at (1.5,-3.5) {1};
\node  at (-1.5,-3.5) {1};
\end{tikzpicture}\quad
 \begin{tikzpicture}[scale=1/2]
\draw (0,0)--(1,0)--(1,-2)--(2,-2)--(2,-4)--(-3,-4)--(-3,-3)--(-2,-3)--(-2,-2)--(-1,-2)--(-1,-1)--(0,-1)--(0,0);
\draw (1,-2)--(-1,-2)--(-1,-4) (-1,-3)--(2,-3);
\node  at (0.5,-1.5) {2};
\node  at (1.5,-2.5) {$2'$};
\node  at (1.5,-3.5) {1};
\node  at (-1.5,-3.5) {1};
\end{tikzpicture} \quad
\begin{tikzpicture}[scale=1/2]
\draw (0,0)--(1,0)--(1,-2)--(2,-2)--(2,-4)--(-3,-4)--(-3,-3)--(-2,-3)--(-2,-2)--(-1,-2)--(-1,-1)--(0,-1)--(0,0);
\draw (1,-2)--(-1,-2)--(-1,-4) (-1,-3)--(2,-3);
\node  at (0.5,-1.5) {$2'$};
\node  at (1.5,-2.5) {$2'$};
\node  at (1.5,-3.5) {$2'$};
\node  at (-1.5,-3.5) {$1'$};
\end{tikzpicture}\quad 
 \begin{tikzpicture}[scale=1/2]
\draw (0,0)--(1,0)--(1,-2)--(2,-2)--(2,-4)--(-3,-4)--(-3,-3)--(-2,-3)--(-2,-2)--(-1,-2)--(-1,-1)--(0,-1)--(0,0);
\draw (1,-2)--(-1,-2)--(-1,-4) (-1,-3)--(2,-3);
\node  at (0.5,-1.5) {2};
\node  at (1.5,-2.5) {$2'$};
\node  at (1.5,-3.5) {$2'$};
\node  at (-1.5,-3.5) {$1'$};
\end{tikzpicture} \quad
\begin{tikzpicture}[scale=1/2]
\draw (0,0)--(1,0)--(1,-2)--(2,-2)--(2,-4)--(-3,-4)--(-3,-3)--(-2,-3)--(-2,-2)--(-1,-2)--(-1,-1)--(0,-1)--(0,0);
\draw (1,-2)--(-1,-2)--(-1,-4) (-1,-3)--(2,-3);
\node  at (0.5,-1.5) {$2'$};
\node  at (1.5,-2.5) {$2'$};
\node  at (1.5,-3.5) {$2'$};
\node  at (-1.5,-3.5) {1};
\end{tikzpicture}\quad 
 \begin{tikzpicture}[scale=1/2]
\draw (0,0)--(1,0)--(1,-2)--(2,-2)--(2,-4)--(-3,-4)--(-3,-3)--(-2,-3)--(-2,-2)--(-1,-2)--(-1,-1)--(0,-1)--(0,0);
\draw (1,-2)--(-1,-2)--(-1,-4) (-1,-3)--(2,-3);
\node  at (0.5,-1.5) {2};
\node  at (1.5,-2.5) {$2'$};
\node  at (1.5,-3.5) {$2'$};
\node  at (-1.5,-3.5) {1};
\end{tikzpicture}}

\vspace{.5cm}
\scalebox{.7}{
\begin{tikzpicture}[scale=1/2]
\draw (0,0)--(1,0)--(1,-2)--(2,-2)--(2,-4)--(-3,-4)--(-3,-3)--(-2,-3)--(-2,-2)--(-1,-2)--(-1,-1)--(0,-1)--(0,0);
\draw (1,-2)--(-1,-2)--(-1,-4) (0,-2)--(0,-3)--(1,-3)--(1,-4);
\node at (0.5,-1.5) {$2'$};
\node at (0.5,-3.5) {$1$};
\node at  (1.5,-3.5) {$1$};
\node at (-1.5,-3.5) {$1'$};
\end{tikzpicture}\quad 
\begin{tikzpicture}[scale=1/2]
\draw (0,0)--(1,0)--(1,-2)--(2,-2)--(2,-4)--(-3,-4)--(-3,-3)--(-2,-3)--(-2,-2)--(-1,-2)--(-1,-1)--(0,-1)--(0,0);
\draw (1,-2)--(-1,-2)--(-1,-4) (0,-2)--(0,-3)--(1,-3)--(1,-4);
\node at (0.5,-1.5) {$2$};
\node at (0.5,-3.5) {$1$};
\node at  (1.5,-3.5) {$1$};
\node at (-1.5,-3.5) {$1'$};
\end{tikzpicture}\quad 
 \begin{tikzpicture}[scale=1/2]
\draw (0,0)--(1,0)--(1,-2)--(2,-2)--(2,-4)--(-3,-4)--(-3,-3)--(-2,-3)--(-2,-2)--(-1,-2)--(-1,-1)--(0,-1)--(0,0);
\draw (1,-2)--(-1,-2)--(-1,-4) (0,-2)--(0,-3)--(1,-3)--(1,-4);
\node at (0.5,-1.5) {$2'$};
\node at (0.5,-3.5) {$1$};
\node at  (1.5,-3.5) {$1$};
\node at (-1.5,-3.5) {$1$};
\end{tikzpicture}\quad  
 \begin{tikzpicture}[scale=1/2]
\draw (0,0)--(1,0)--(1,-2)--(2,-2)--(2,-4)--(-3,-4)--(-3,-3)--(-2,-3)--(-2,-2)--(-1,-2)--(-1,-1)--(0,-1)--(0,0);
\draw (1,-2)--(-1,-2)--(-1,-4) (0,-2)--(0,-3)--(1,-3)--(1,-4);
\node at (0.5,-1.5) {2};
\node at (0.5,-3.5) {$1$};
\node at  (1.5,-3.5) {$1$};
\node at (-1.5,-3.5) {1};
\end{tikzpicture}\quad
\begin{tikzpicture}[scale=1/2]
\draw (0,0)--(1,0)--(1,-2)--(2,-2)--(2,-4)--(-3,-4)--(-3,-3)--(-2,-3)--(-2,-2)--(-1,-2)--(-1,-1)--(0,-1)--(0,0);
\draw (1,-2)--(-1,-2)--(-1,-4) (0,-2)--(0,-3)--(1,-3)--(1,-4);
\node at (0.5,-1.5) {$2'$};
\node at (0.5,-3.5) {$1$};
\node at  (1.5,-3.5) {$2'$};
\node at (-1.5,-3.5) {$1'$};
\end{tikzpicture}\quad 
 \begin{tikzpicture}[scale=1/2]
\draw (0,0)--(1,0)--(1,-2)--(2,-2)--(2,-4)--(-3,-4)--(-3,-3)--(-2,-3)--(-2,-2)--(-1,-2)--(-1,-1)--(0,-1)--(0,0);
\draw (1,-2)--(-1,-2)--(-1,-4) (0,-2)--(0,-3)--(1,-3)--(1,-4);
\node at (0.5,-1.5) {2};
\node at (0.5,-3.5) {$1$};
\node at  (1.5,-3.5) {$2'$};
\node at (-1.5,-3.5) {$1'$};
\end{tikzpicture} \quad 
\begin{tikzpicture}[scale=1/2]
\draw (0,0)--(1,0)--(1,-2)--(2,-2)--(2,-4)--(-3,-4)--(-3,-3)--(-2,-3)--(-2,-2)--(-1,-2)--(-1,-1)--(0,-1)--(0,0);
\draw (1,-2)--(-1,-2)--(-1,-4) (0,-2)--(0,-3)--(1,-3)--(1,-4);
\node at (0.5,-1.5) {$2'$};
\node at (0.5,-3.5) {$1$};
\node at  (1.5,-3.5) {$2'$};
\node at (-1.5,-3.5) {1};
\end{tikzpicture}\quad 
 \begin{tikzpicture}[scale=1/2]
\draw (0,0)--(1,0)--(1,-2)--(2,-2)--(2,-4)--(-3,-4)--(-3,-3)--(-2,-3)--(-2,-2)--(-1,-2)--(-1,-1)--(0,-1)--(0,0);
\draw (1,-2)--(-1,-2)--(-1,-4) (0,-2)--(0,-3)--(1,-3)--(1,-4);
\node at (0.5,-1.5) {2};
\node at (0.5,-3.5) {$1$};
\node at  (1.5,-3.5) {$2'$};
\node at (-1.5,-3.5) {1};
\end{tikzpicture}}

\vspace{.5cm}

\caption{Semi-standard $5$-ribbon fillings of $(5,4,2,1)$ with numbers $\leq 2$}
\label{fig:3fillsof5421}
\end{figure}
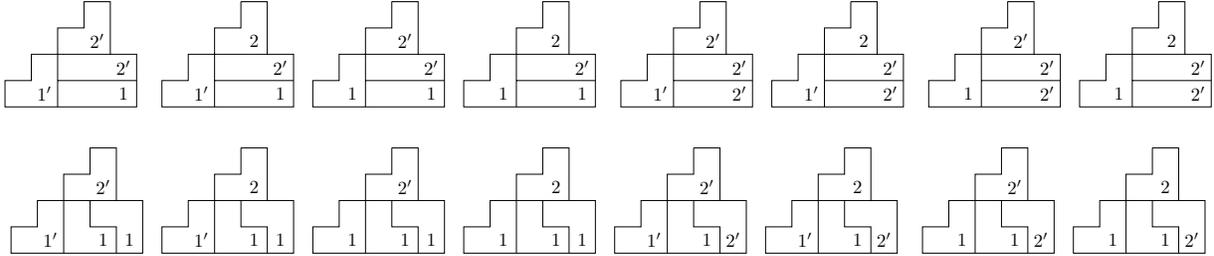

\begin{defn}  For a shifted shape $\lambda$, we define its $k$-ribbon Q and P-functions as follows:
\begin{eqnarray*}
RQ^{(k)}_{\lambda}(X)=\sum_{S\in SSShT^{(k)}(\lambda)}X^{|S|}, \qquad \qquad
RP^{(k)}_{\lambda}(X)=\sum_{S\in SSShT^{*(k)}(\lambda)}X^{|S|}
\end{eqnarray*}
where $SSShT^{(k)}(\lambda)$ is the set of semi-standard shifted $k$-ribbon tableaux of shape $\lambda$, and $SSShT^{*(k)}(\lambda)$ is its subset consisting of tableaux with no marked entries on the ribbons that have boxes on the main diagonal.
\end{defn}

The example illustrated in Figure \ref{fig:3fillsof5421} gives the following ribbon Q- and P-functions restricted to two variables:
\begin{eqnarray*}
RQ^{(3)}_{(5,4,2,1)}(x_1,x_2)&=&4x_1^3x_2+8x_1^2x_2^2+4x_1x_2^3=Q_{3,1}(x_1,x_2)
\\
RP^{(3)}_{(5,4,2,1)}(x_1,x_2)&=&x_1^3x_2+2x_1^2x_2^2+x_1x_2^3=P_{3,1}(x_1,x_2)
\end{eqnarray*}
Note that in this example, the ribbon $Q$ and $P$ functions are Schur Q and P positive respectively. One of our main results in this paper will be to show that the positivity holds  true in general, and give a formula for the ribbon Q-functions in terms of Schur's Q-functions.

\begin{prop}All the shifted $k$-ribbon tableux of shape $\lambda$ have the same number of ribbons that have a box on the main diagonal, called the \emph{$k$-length} of $\lambda$, denoted $\ell^{(k)}(\lambda)$. Consequentally, the Q and P $k$-ribbon functions for $\lambda$ are related by a scalar:
$$RQ^{(k)}_{\lambda}(X)=2^{\ell^{(k)}(\lambda)} RP^{(k)}_{\lambda}(X)$$
\end{prop}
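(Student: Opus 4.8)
### Proof Proposal

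\textbf{The plan} is to show that the $k$-length is an invariant by tracking what happens on the $k$-abacus, rather than arguing directly with diagrams. Recall from Theorem \ref{cor:ribboncorrespondence} that removing a $k$-ribbon with a box on the main diagonal corresponds to one of two abacus moves: either a Type I move that removes a bead from the top row of runner $k$ (this is the single-ribbon case with $m=0$, where the head sits on diagonal $k$ and the tail on diagonal $1$), or a Type II move that removes two conjugate top-row beads (the double-ribbon case). All other $k$-ribbon removals are Type I moves $m+k \mapsto m$ with $m \geq 1$, and these do not touch the main diagonal. So $\ell^{(k)}(\lambda)$ equals the number of ``main-diagonal'' moves in any sequence of abacus moves reducing $\lambda$ to its $k$-core.

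\textbf{First} I would set up the bookkeeping: for the abacus of $\lambda$, let $b$ be the number of beads and let $c$ be the number of beads in the $k$-core's abacus (recall the $k$-core is unique by the corollary following Theorem \ref{cor:ribboncorrespondence}, so $c$ is well-defined). A Type I move preserves the number of beads; a Type II move removes two beads. Hence in any reduction sequence the number of Type II moves is exactly $(b-c)/2$ — this is forced, independent of the sequence. \textbf{Next} I need to count the Type I main-diagonal moves, i.e. those removing a bead from the top of runner $k$. Here the key observation is that runner $k$ behaves exactly like the abacus of a $1$-core under the moves available to it: beads on runner $k$ can only slide up within runner $k$ or be removed from its top, and no Type II move involves runner $k$ (since $k$ is conjugate to $0$, which is not a runner). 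Therefore the net number of beads deleted from runner $k$ over the whole reduction equals the initial number of beads on runner $k$ in $\lambda$'s abacus minus the number on runner $k$ in the $k$-core's abacus — again an invariant of $\lambda$, not of the chosen sequence.

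\textbf{Then} $\ell^{(k)}(\lambda)$ is the sum of these two quantities, $(b-c)/2$ plus the runner-$k$ bead deficit, both manifestly depending only on $\lambda$; this proves the first assertion. \textbf{For the scalar relation}, fix any semi-standard $k$-ribbon tableau $S$ of shape $\lambda$. Each of the $\ell^{(k)}(\lambda)$ ribbons meeting the main diagonal carries a label that is either marked or unmarked, and toggling the mark on any such label independently produces another valid semi-standard $k$-ribbon tableau (the mark on a ribbon touching the main diagonal is unconstrained by the horizontal/vertical strip conditions, exactly as the main-diagonal marking is unconstrained for ordinary shifted tableaux in the definitions of $Q_\lambda$ and $P_\lambda$); all the other ribbons' markings are unaffected. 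Since $SSShT^{*(k)}(\lambda)$ consists precisely of those $S$ with all $\ell^{(k)}(\lambda)$ main-diagonal ribbon labels unmarked, summing over the $2^{\ell^{(k)}(\lambda)}$ choices of marks gives $RQ^{(k)}_\lambda(X) = 2^{\ell^{(k)}(\lambda)} RP^{(k)}_\lambda(X)$, and toggling a mark does not change $X^{|S|}$.

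\textbf{The main obstacle} is making the claim ``runner $k$ evolves independently'' fully rigorous — in particular, verifying that the unique $k$-core really is reached regardless of move order (which the cited invariance of the final abacus state handles) and that no interaction between runners forces extra or fewer main-diagonal removals. A clean way is to phrase everything in terms of the invariant quantities $b$, $c$, and the runner-wise bead counts of $\lambda$'s abacus versus the $k$-core's abacus, so that no reference to a particular sequence survives. A secondary point to check carefully is that the mark-toggling bijection in the last step genuinely preserves the defining chain conditions of a semi-standard $k$-ribbon tableau; this is immediate from the definitions since the strip conditions constrain only the relative positions of ribbon heads, never the marks, and the $P$-function restriction is by definition exactly the ``no marks on main-diagonal ribbons'' condition.
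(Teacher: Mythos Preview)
Your approach is essentially the paper's: identify the ribbons meeting the main diagonal with the bead-removing abacus moves via Theorem~\ref{cor:ribboncorrespondence}, and then argue that the number of such moves is determined by the initial and final (core) abacus states, hence is an invariant of $\lambda$. The paper states this in one line and records the explicit formula $\ell^{(k)}(\lambda)=|a_k|+\sum_{i<k/2}\max\{|a_i|,|a_{k-i}|\}$ in terms of the runner bead-counts.

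There is, however, a genuine slip in your bookkeeping. You write ``A Type~I move preserves the number of beads; a Type~II move removes two beads. Hence in any reduction sequence the number of Type~II moves is exactly $(b-c)/2$.'' But you yourself noted one paragraph earlier that a Type~I move can also \emph{remove} a bead from the top of runner~$k$; that sub-case drops the bead count by one. So $(b-c)/2$ is not the number of Type~II moves, and your final sum ``$(b-c)/2$ plus the runner-$k$ deficit'' double-counts the runner-$k$ removals. The fix is immediate and stays within your framework: the beads removed from runners $1,\ldots,k-1$ are removed only by Type~II moves (runner~$k$ is not conjugate to any runner), so the number of Type~II moves is half the bead deficit on runners $1,\ldots,k-1$, while the Type~I removals equal the runner-$k$ deficit $|a_k|$; both are invariants of $\lambda$ since the core is unique.

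On the scalar relation, the paper's proof does not spell this out at all, so your mark-toggling paragraph is a genuine addition. Your justification that ``the mark on a ribbon touching the main diagonal is unconstrained by the horizontal/vertical strip conditions'' is asserted rather than argued, and is less obvious than in the ordinary shifted case (where a main-diagonal cell has nothing to its left or below): here the head of a main-diagonal ribbon need not itself lie on the main diagonal, and flipping its label between $i$ and $i'$ moves it between a horizontal and a vertical strip, which could in principle interact with other ribbons carrying the same label. You would need to check, for instance, that no two main-diagonal ribbons in the same tableau can carry the same label, or otherwise argue the independence more carefully.
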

\begin{proof} By Theorem \ref{cor:ribboncorrespondence}, the ribbons that have a box on the main diagonal correspond to the moves on the abacus where beads are removed. The total number is independent of the order of the moves. In fact,  if we denote the number of beads on runner $i$ by $|a_i|$ then:
$$\ell^{(k)}(\lambda)=|a_k|+\sum_{i<(k/2)}max\{|a_i|,|a_{k-i}|\}$$
\end{proof}

\section{Folded Tableaux}

In this section, we will introduce an operation combining two shifted shapes to get an unshifted shape with a specialized diagonal, which we will call a folded diagram. We will later use the folded diagrams, along with their corresponding tableaux in describing the $k$-quotient of a shifted shape. We will use the notation $\delta_n$ to denote the staircase partition $(n,n-1,\ldots,1)$ of size $n$.

\begin{defn}A \emph{folded diagram} $\Gamma=(\gamma,\mathfrak{d})$ is an unshifted diagram $\gamma$ called the \emph{underlying shape} of $\Gamma$ along with a specialized main diagonal $\mathfrak{d}$ which does not necessarily intersect $\gamma$.
\end{defn}

\begin{defn}Let us say we have a pair of shifted shapes (or equivalently strict partitions) $\alpha $ and $\beta$. Denote $n=$min$\{ht(\alpha),ht(\beta)\}$. Their \emph{combination}, which will be denoted by $\alpha \diamond \beta$ will be the folded diagram we obtain by:
\begin{itemize}
\item Step I: If one of the shapes has height $m$ larger than $n$, delete its $m-n$ leftmost columns, so that both shapes have the same number of boxes in their main diagonals. 
\item Step II: Transpose $\alpha$.
\item  Step III: Paste the two diagrams together along their main diagonals, and label this diagonal $\mathfrak{d}$.
\end{itemize} 
 \end{defn}

\textbf{Example:}
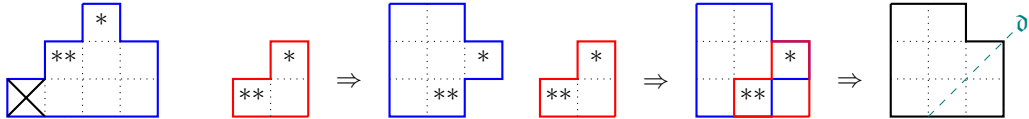
\begin{figure}[H]
    \centering
  \begin{tikzpicture}[scale=1/2]
%horizontal lines
\draw[dotted ] (0,-1) -- (1,-1);
\draw[dotted ] (-1,-2) -- (2,-2);
\draw[dotted ] (-1,-3) -- (2,-3);
%outline
\draw[blue, thick] (0,0)--(0,-1)--(-1,-1)--(-1,-2)--(-2,-2)--(-2,-3)--(2,-3)--(2,-1)--(1,-1)--(1,0)--(0,0);
\draw[red, thick]
(6,-3)--(4,-3)--(4,-2)--(5,-2)--(5,-1)--(6,-1)--(6,-3);
%vertical lines
\draw[dotted ] (0,-1) -- (0,-3)   (1,-1) -- (1,-3) ;
\draw [dotted ] (-1,-2)--(-1,-3) ;
%extras
\draw[thick] (-2,-3)--(-1,-2) (-2,-2)--(-1,-3);
%second quotient
\draw[dotted ]  (5,-3)--(5,-2)--(6,-2);
%labels
\node  at (5.5,-1.5) {*};
\node  at (4.5,-2.5) {**};
\node  at (0.5,-0.5) {*};
\node  at (-0.5,-1.5) {**};
\end{tikzpicture}\quad \raisebox{.4cm}{$\Rightarrow$}\quad
  \begin{tikzpicture}[scale=1/2]
%horizontal lines
\draw[dotted ] (0,-1) -- (2,-1);
\draw[dotted ] (0,-2) -- (2,-2);
%outline
\draw[blue, thick] (0,0)--(2,0)--(2,-1)--(3,-1)--(3,-2)--(2,-2)--(2,-3)--(0,-3)--(0,0);
\draw[red, thick]
(6,-3)--(4,-3)--(4,-2)--(5,-2)--(5,-1)--(6,-1)--(6,-3);
%vertical lines
\draw[dotted ] (1,0) -- (1,-3) ;
\draw [dotted ] (2,-1)--(2,-2) ;
%second quotient
\draw[dotted ]  (5,-3)--(5,-2)--(6,-2);
%labels
\node  at (5.5,-1.5) {*};
\node  at (4.5,-2.5) {**};
\node  at (2.5,-1.5) {*};
\node  at (1.5,-2.5) {**};
\end{tikzpicture}\quad \raisebox{.4cm}{$\Rightarrow$}\quad
\begin{tikzpicture}[scale=1/2]
\draw[dotted ] (0,-2)--(1,-2);
\draw[dotted ] (0,-1) -- (2,-1);
\draw[dotted ] (1,0)--(1,-2) ;
\draw[blue, thick] (0,0)--(2,0)--(2,-1)--(3,-1)--(3,-2)--(2,-2)--(2,-3)--(0,-3)--(0,0);
\draw[red, thick] (3,-3)--(1,-3)--(1,-2)--(2,-2)--(2,-1)--(3,-1)--(3,-3);
\node   at (1.5,-2.5) {**};
\node   at (2.5, -1.5) {*};
\end{tikzpicture}\quad \raisebox{.4cm}{$\Rightarrow$}\quad
\begin{tikzpicture}[scale=1/2]
\draw[dotted ] (0,-2)--(3,-2);
\draw[dotted ] (0,-1) -- (2,-1);
\draw[dotted ] (1,0)--(1,-3) (2,0)--(2,-3) (3,-1)--(3,-3);
\draw[thick] (0,0)--(2,0)--(2,-1)--(3,-1)--(3,-3)--(0,-3)--(0,0);
\draw[dashed, teal](1,-3)--(3.3,-0.7);
\node[teal] at (3.5,-0.5) {$\mathfrak{d}$};

\end{tikzpicture}
    \caption{The combination of $\alpha=(4,3,1)$ and $\beta=(2,1)$}
\end{figure}

\begin{prop}Any given folded diagram $(\gamma, \mathfrak{d})$ can be uniquely described as a combination of two shifted shapes.
\end{prop}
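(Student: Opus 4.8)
The plan is to construct an explicit inverse to the combination operation $\alpha \diamond \beta$, i.e.\ given a folded diagram $(\gamma,\mathfrak{d})$, to extract a canonical pair of strict partitions $(\alpha,\beta)$ and then check both that $\alpha \diamond \beta = (\gamma,\mathfrak{d})$ and that this is the only such pair. The key observation is that the specialized diagonal $\mathfrak{d}$ cuts $\gamma$ into two pieces: the cells on or below-left of $\mathfrak{d}$ (in the transposed half) and the cells on or above-right of it. Concretely, I would place $\gamma$ in the plane so that $\mathfrak{d}$ is the line playing the role of the main diagonal, and declare $\beta$ to be the shape formed by the cells of $\gamma$ weakly on the $\beta$-side of $\mathfrak{d}$, read off as a shifted diagram with main diagonal $\mathfrak{d}$, and $\alpha$ to be the transpose of the shape formed by the cells weakly on the other side. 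One must be slightly careful about which side the diagonal cells themselves are assigned to; the convention in Step III (pasting "along" the diagonal) tells us the diagonal cells are shared, so I would assign the diagonal run of $\gamma$ to \emph{both} $\alpha$ and $\beta$ (each gets, in its own orientation, a full main-diagonal segment of that length), which is exactly what makes the combination reversible.

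The main steps, in order, are: (1) Formalize the "cutting" map $(\gamma,\mathfrak{d}) \mapsto (\alpha,\beta)$ just described, and verify that the two pieces are genuinely (shifted) Young diagrams — this uses that $\gamma$ is an ordinary Young diagram, so its rows and columns are nested, which forces each of the two halves to have strictly decreasing row lengths when measured from $\mathfrak{d}$; hence $\alpha$ and $\beta$ are strict partitions. (2) Check that applying $\diamond$ to the extracted $(\alpha,\beta)$ recovers $(\gamma,\mathfrak{d})$: Step~I of $\diamond$ does nothing here because, by construction, $\alpha$ and $\beta$ have the same number of main-diagonal boxes (namely the length of the diagonal run of $\gamma$ along $\mathfrak{d}$); Step~II transposes $\alpha$ back to the original left piece; Step~III re-glues along $\mathfrak{d}$, reproducing $\gamma$ and its labelled diagonal. (3) Check uniqueness: if $\alpha' \diamond \beta' = (\gamma,\mathfrak{d})$, then after Step~I the two shapes already share a main diagonal of a fixed length determined by $\gamma$ and $\mathfrak{d}$; since Step~I only deletes leftmost columns and $\alpha',\beta'$ are recovered by Step~III as precisely the two sides of the cut, they must equal the $\alpha,\beta$ produced by the cutting map — note that Step~I is not literally invertible (it forgets columns), but the proposition only asserts uniqueness of the pair that \emph{combines} to $(\gamma,\mathfrak d)$, and after the normalization in Step~I any valid pair is forced.

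The step I expect to be the real obstacle is (1): pinning down the correct bookkeeping at the diagonal $\mathfrak{d}$ so that "number of boxes on the main diagonal" matches on both sides and the glued object has no gap or overlap. In particular one has to handle the degenerate cases cleanly — when $\mathfrak{d}$ does not meet $\gamma$ at all (so one or both of $\alpha,\beta$ is empty or, more precisely, the pair is essentially determined by a single shifted shape sitting entirely on one side), and when $\gamma$ meets $\mathfrak{d}$ in a diagonal segment that must be counted once for $\gamma$ but contributes a main-diagonal cell to each of $\alpha$ and $\beta$. Everything else — strictness of the parts, the nesting arguments, and the verification that the two composite maps are mutually inverse — is then routine diagram-chasing.
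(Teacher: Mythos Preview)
Your approach---constructing an explicit inverse to $\diamond$ by cutting $\gamma$ along $\mathfrak{d}$---matches the paper's, which simply writes down formulas for $\alpha$ and $\beta$ in terms of the parts of $\gamma$ and the offset of $\mathfrak{d}$. But your cutting map has a real gap, and it is not confined to the degenerate case you flag.

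The problem is your step~(2) claim that ``Step~I of $\diamond$ does nothing here because, by construction, $\alpha$ and $\beta$ have the same number of main-diagonal boxes.'' Take the paper's own worked example, $\alpha=(4,3,1)$, $\beta=(2,1)$: the combination is $\gamma=(3,3,2)$ with $\mathfrak{d}$ offset one unit from the bottom-left corner and passing through two cells. Your cut assigns those two diagonal cells to both sides, so both recovered shapes would have height~$2$; but the true $\alpha$ has height~$3$. Concretely, the seven cells of $\gamma$ weakly on the $\alpha$-side of $\mathfrak{d}$ are not (the transpose of) any shifted shape of height~$2$, so the verification you plan in step~(1) will simply fail here. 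What is missing is the staircase that Step~I deleted: its size is the \emph{offset} $k$ of $\mathfrak{d}$ from the corner of $\gamma$, not merely whether $\mathfrak{d}$ meets $\gamma$. The paper's formula $\alpha=\delta_{k+t}+(\cdots)$ rebuilds the taller shape by appending $\delta_{k+t}$ rather than $\delta_t$, and this is exactly the bookkeeping you need in the generic case, not just the degenerate one. Once you see that the offset of $\mathfrak{d}$ records the height difference $|\mathrm{ht}(\alpha)-\mathrm{ht}(\beta)|$, your worry in step~(3) about Step~I ``forgetting columns'' also dissolves: nothing is forgotten, because the position of $\mathfrak{d}$ encodes precisely what was removed.
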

\begin{proof}
Let us denote the difference $\lambda\backslash\delta_{height(\lambda)}$ by $\lambda'$. If $\mathfrak{d}$ is on the shape, going through $t\geq 0$ boxes, then $(\gamma,\mathfrak{d})=\alpha \diamond \beta$ where $\alpha = \delta_{k+t} +(\{\gamma_{t+1},\gamma_{t+2},\ldots,\gamma{n}\})'$ and   $\beta=\delta_t+\{\gamma_{k+t+1}',\gamma_{k+t+2}',\ldots,\gamma_{n'}'\}$.

If $\mathfrak{d}$ is $k\geq 0$ units to the right of the bottom left corner and outside the shape (the case $k<0$ is symmetrical) we have $(\gamma,\mathfrak{d})=\alpha \diamond \beta$ where $\alpha = \delta_k +(\gamma)'$ and $\beta$ is empty.
\end{proof}

\begin{defn}A \emph{standard folded tableau} of shape $\Gamma=(\gamma,\mathfrak{d})$ is a filling of the cells of $\gamma$ using numbers $1,2,\ldots,n$ each exactly once, with numbers increasing left to right and bottom to top.
\end{defn}

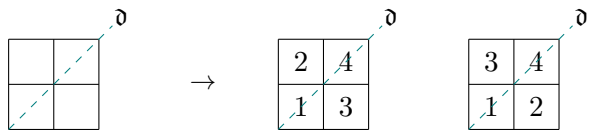
\begin{figure}[H]
    \centering
\begin{tikzpicture}[scale=0.6]
\draw (0,0) -- (2,0) (0,1)--(2,1) (0,2)--(2,2);
\draw (0,0) -- (0,2)   (1,0) -- (1,2) (2,0)--(2,2);
\draw[dashed,teal] (0,0) -- (2.3,2.3);
%nodes
\node at (2.5,2.5) {$\mathfrak{d}$};
\end{tikzpicture}\qquad \raisebox{0.55cm}{$\rightarrow$} \qquad
\begin{tikzpicture}[scale=0.6]
\draw (0,0) -- (2,0) (0,1)--(2,1) (0,2)--(2,2);
\draw (0,0) -- (0,2)   (1,0) -- (1,2) (2,0)--(2,2);
%nodes
\node[font=\large] at (0.5,0.5) {1};
\node[font=\large] at (0.5,1.5) {2};
\node[font=\large] at (1.5,0.5) {3};
\node[font=\large] at (1.5,1.5) {4};
\draw[dashed,teal] (0,0) -- (2.3,2.3);
\node at (2.5,2.5) {$\mathfrak{d}$};
\end{tikzpicture} \qquad
\begin{tikzpicture}[scale=0.6]
\draw (0,0) -- (2,0) (0,1)--(2,1) (0,2)--(2,2);
\draw (0,0) -- (0,2)   (1,0) -- (1,2) (2,0)--(2,2);
%nodes
\node[font=\large] at (0.5,0.5) {1};
\node[font=\large] at (0.5,1.5) {3};
\node[font=\large] at (1.5,0.5) {2};
\node[font=\large] at (1.5,1.5) {4};
\draw[dashed,teal] (0,0) -- (2.3,2.3);
\node at (2.5,2.5) {$\mathfrak{d}$};
\end{tikzpicture} 
 \label{fig:foldst}
    \caption{Standard Folded Tableaux of shape $\Gamma=((2,2),\mathfrak{d})$}
\end{figure}

\begin{defn} A \emph{semi-standard folded tableau} of shape $\Gamma=(\gamma,\mathfrak{d})$ is a semi-standard filling of the skew-shifted diagram $\gamma$ with the rules inverted weakly above the specialized diagonal (above $\mathfrak{d}$ we can have no repeated unmarked numbers on the same row, and no two repeated marked numbers on the same column).
\end{defn}

\begin{figure}[H]
    \centering
\begin{tikzpicture}[scale=0.6]
\draw (0,0) -- (2,0) (0,1)--(2,1) (0,2)--(2,2);
\draw (0,0) -- (0,2)   (1,0) -- (1,2) (2,0)--(2,2);
\draw[dashed,teal] (0,0) -- (2.3,2.3);
%nodes
\node at (2.5,2.5) {$\mathfrak{d}$};
\end{tikzpicture}\raisebox{0.55cm}{$\rightarrow$} \enskip
\begin{tikzpicture}[scale=0.6]
\draw (0,0) -- (2,0) (0,1)--(2,1) (0,2)--(2,2);
\draw (0,0) -- (0,2)   (1,0) -- (1,2) (2,0)--(2,2);
%nodes
\node[font=\large] at (0.5,0.5) {1};
\node[font=\large] at (0.5,1.5) {1};
\node[font=\large] at (1.5,0.5) {1};
\node[font=\large] at (1.5,1.5) {$2'$};
\end{tikzpicture} \enskip
\begin{tikzpicture}[scale=0.6]
\draw (0,0) -- (2,0) (0,1)--(2,1) (0,2)--(2,2);
\draw (0,0) -- (0,2)   (1,0) -- (1,2) (2,0)--(2,2);
%nodes
\node[font=\large] at (0.5,0.5) {1};
\node[font=\large] at (0.5,1.5) {$2'$};
\node[font=\large] at (1.5,0.5) {1};
\node[font=\large] at (1.5,1.5) {$2'$};
\end{tikzpicture} \enskip\begin{tikzpicture}[scale=0.6]
\draw (0,0) -- (2,0) (0,1)--(2,1) (0,2)--(2,2);
\draw (0,0) -- (0,2)   (1,0) -- (1,2) (2,0)--(2,2);
%nodes
\node[font=\large] at (0.5,0.5) {1};
\node[font=\large] at (0.5,1.5) {1};
\node[font=\large] at (1.5,0.5) {$2'$};
\node[font=\large] at (1.5,1.5) {$2'$};
\end{tikzpicture} \enskip\begin{tikzpicture}[scale=0.6]
\draw (0,0) -- (2,0) (0,1)--(2,1) (0,2)--(2,2);
\draw (0,0) -- (0,2)   (1,0) -- (1,2) (2,0)--(2,2);
%nodes
\node[font=\large] at (0.5,0.5) {1};
\node[font=\large] at (0.5,1.5) {$2'$};
\node[font=\large] at (1.5,0.5) {$2'$};
\node[font=\large] at (1.5,1.5) {$2'$};
\end{tikzpicture} 
 \enskip
\begin{tikzpicture}[scale=0.6]
\draw (0,0) -- (2,0) (0,1)--(2,1) (0,2)--(2,2);
\draw (0,0) -- (0,2)   (1,0) -- (1,2) (2,0)--(2,2);
%nodes
\node[font=\large] at (0.5,0.5) {$1'$};
\node[font=\large] at (0.5,1.5) {1};
\node[font=\large] at (1.5,0.5) {1};
\node[font=\large] at (1.5,1.5) {$2'$};
\end{tikzpicture} \enskip
\begin{tikzpicture}[scale=0.6]
\draw (0,0) -- (2,0) (0,1)--(2,1) (0,2)--(2,2);
\draw (0,0) -- (0,2)   (1,0) -- (1,2) (2,0)--(2,2);
%nodes
\node[font=\large] at (0.5,0.5) {$1'$};
\node[font=\large] at (0.5,1.5) {$2'$};
\node[font=\large] at (1.5,0.5) {1};
\node[font=\large] at (1.5,1.5) {$2'$};
\end{tikzpicture} \enskip\begin{tikzpicture}[scale=0.6]
\draw (0,0) -- (2,0) (0,1)--(2,1) (0,2)--(2,2);
\draw (0,0) -- (0,2)   (1,0) -- (1,2) (2,0)--(2,2);
%nodes
\node[font=\large] at (0.5,0.5) {$1'$};
\node[font=\large] at (0.5,1.5) {1};
\node[font=\large] at (1.5,0.5) {$2'$};
\node[font=\large] at (1.5,1.5) {$2'$};
\end{tikzpicture} \enskip\begin{tikzpicture}[scale=0.6]
\draw (0,0) -- (2,0) (0,1)--(2,1) (0,2)--(2,2);
\draw (0,0) -- (0,2)   (1,0) -- (1,2) (2,0)--(2,2);
%nodes
\node[font=\large] at (0.5,0.5) {$1'$};
\node[font=\large] at (0.5,1.5) {$2'$};
\node[font=\large] at (1.5,0.5) {$2'$};
\node[font=\large] at (1.5,1.5) {$2'$};
\end{tikzpicture} 
    \caption{Semi-standard folded tableaux of shape $\Gamma=((2,2),\mathfrak{d})$, using numbers $\leq 2'$}
\label{fig:foldss}
\end{figure}
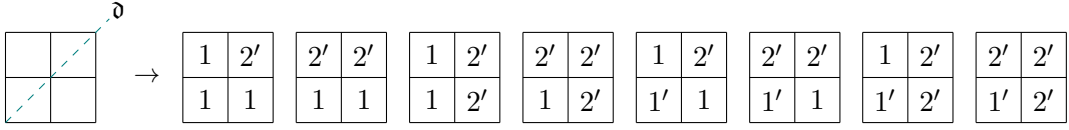

\begin{defn}We define the folded P- and Q-functions as follows.
$$Q^f_{(\gamma,\mathfrak{d})}(X)=\sum_{S\in \emph{SSFT}(\gamma,\mathfrak{d})}X^{|S|}$$
$$P^f_{(\gamma,\mathfrak{d})}(X)=2^{-size(\mathfrak{d})}Q^f_{(\gamma,\mathfrak{d})}(X)$$
where $SSFT(\gamma,\mathfrak{d})$ denotes the set of all semi-standard folded tableaux of shape $\Gamma=(\gamma,\mathfrak{d})$.
\end{defn}

The folded shape $((2,2),\mathfrak{d})$ illusturated in Figures \ref{fig:foldst} and \ref{fig:foldss} has the following folded P- and Q-functions:

$P^f_{(22,\mathfrak{d})}(X)=m_{31}(X)+2m_{22}(X)+4m_{211}(X)+8m_{1111}(X)= s_{31}(X)+s_{22}(X)+s_{2111}(X)=P_{(31)}(X)$

$Q^f_{(22,\mathfrak{d})}(X)=2^{ht(\mathfrak{d})}P^f_{(22,\mathfrak{d})}(X)=4(s_{31}(X)+s_{22}(X)+s_{2111}(X))=Q_{(31)}(X)$

\begin{thm}The function $Q^f_{(\gamma,\mathfrak{d})}(X)$ is independent of the choice of the main diagonal $\mathfrak{d}$.
\end{thm}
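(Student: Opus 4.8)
The plan is to show that shifting the specialized diagonal $\mathfrak{d}$ by one unit—say, moving it one step to the right—induces a weight-preserving bijection on semi-standard folded tableaux, so that $Q^f_{(\gamma,\mathfrak{d})}(X) = Q^f_{(\gamma,\mathfrak{d}')}(X)$, and then iterate. The cleanest route is to invoke the decomposition from the previous proposition: a folded diagram $(\gamma,\mathfrak{d})$ is the combination $\alpha \diamond \beta$ of two strict partitions, where the part of $\gamma$ weakly above $\mathfrak{d}$ (transposed) records $\alpha$ and the part weakly below records $\beta$. Moving $\mathfrak{d}$ one column changes which pair $(\alpha,\beta)$ of strict partitions we read off, but keeps the underlying shape $\gamma$ fixed; so it suffices to identify $Q^f_{(\gamma,\mathfrak{d})}$ with something manifestly independent of $\mathfrak{d}$.

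The key observation I would make is that $Q^f_{(\gamma,\mathfrak{d})}(X)$ is a product (or a skew Schur $Q$-function) that does not see $\mathfrak{d}$ at all. Concretely: a semi-standard folded tableau is a filling of $\gamma$ with the semi-standard shifted rules below $\mathfrak{d}$ and the transposed rules above $\mathfrak{d}$. I claim this is exactly the data of a semi-standard shifted tableau on the ``lower'' shifted shape $\beta$ together with a semi-standard shifted tableau on the ``upper'' shifted shape $\alpha$ (after transposing back), with no interaction across $\mathfrak{d}$ because the diagonal $\mathfrak{d}$ itself is where the two rule-sets meet and columns through $\mathfrak{d}$ split cleanly into an above-part and a below-part. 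Hence $Q^f_{(\gamma,\mathfrak{d})}(X) = Q_\alpha(X)\, Q_\beta(X)$ up to a power of $2$ accounting for the shared main diagonal—more precisely I expect $Q^f_{(\gamma,\mathfrak{d})}(X) = 2^{-\mathrm{size}(\mathfrak{d})} Q_\alpha(X) Q_\beta(X)$ reconciled with the $P$/$Q$ normalization, matching the worked example where $Q^f_{(22,\mathfrak{d})} = Q_{(31)}$. Actually the honest statement is subtler since $\alpha,\beta$ themselves change with $\mathfrak{d}$; so instead I would directly exhibit, for the move $\mathfrak{d} \to \mathfrak{d}'$, a bijection $\mathrm{SSFT}(\gamma,\mathfrak{d}) \to \mathrm{SSFT}(\gamma,\mathfrak{d}')$ that is the identity on the filling away from the single diagonal being crossed, and on that diagonal uses the standard ``local'' involution that swaps the marked/unmarked reading convention—analogous to the classical fact that $Q_\lambda$ can be computed with or without a marking restriction on the main diagonal, at the cost of a factor of $2^{\pm 1}$ per diagonal cell, which is precisely absorbed by the $2^{-\mathrm{size}(\mathfrak{d})}$ in the definition of $P^f$ versus $Q^f$.

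So the steps, in order: (1) reduce to the single-step move $\mathfrak{d} \to \mathfrak{d}'$ (one column right), since any two choices of specialized diagonal are connected by such moves and the underlying shape $\gamma$ is unchanged; (2) for a tableau $S \in \mathrm{SSFT}(\gamma,\mathfrak{d})$, analyze the one diagonal $D$ of cells that lies weakly above $\mathfrak{d}$ but weakly below $\mathfrak{d}'$ (equivalently, the cells whose ``rule regime'' flips), and check that the semi-standardness constraints involving $D$ under the $\mathfrak{d}$-convention and under the $\mathfrak{d}'$-convention differ only in whether certain entries on $D$ may be marked/repeated; (3) define the bijection cell-by-cell on $D$ and verify it is weight-preserving and respects all row/column constraints with the neighboring diagonals (this uses that $D$ is a single diagonal, so within $\gamma$ it meets each row and each column at most once, limiting the interaction); (4) conclude $Q^f_{(\gamma,\mathfrak{d})} = Q^f_{(\gamma,\mathfrak{d}')}$, hence independence. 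I expect step (2)–(3)—pinning down exactly how the constraints transform as the diagonal crosses $D$ and constructing the local bijection that is genuinely weight-preserving (not merely up to a power of $2$)—to be the main obstacle; the subtlety is that a cell on $D$ that was ``interior'' (rules inverted) becomes ``exterior'' and the set of admissible letters changes, so the bijection cannot be literally the identity there and one must track marked entries carefully, much as in Stembridge's treatment of shifted tableaux. Everything else is bookkeeping with the combination map and the $P^f$/$Q^f$ normalization.
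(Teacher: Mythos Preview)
Your proposal has the right target but does not reach it. You correctly intuit early on that $Q^f_{(\gamma,\mathfrak{d})}$ should equal something intrinsic to $\gamma$ alone---indeed, the paper shows it equals the skew Schur $Q$-function $Q_{\lambda/\delta_n}$ where $\gamma=\lambda/\delta_n$---but your concrete attempt $Q^f_{(\gamma,\mathfrak{d})}\approx Q_\alpha Q_\beta$ is wrong already at the level of degree: since $\alpha$ and $\beta$ share the cells on $\mathfrak{d}$, one has $|\alpha|+|\beta|=|\gamma|+\mathrm{size}(\mathfrak{d})\neq|\gamma|$ in general (in the worked example $(2,2)$ with $\alpha=\beta=(2,1)$ this would compare a degree-$6$ function to the degree-$4$ function $Q_{(3,1)}$). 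You notice this and pivot to the single-step plan, but steps~(2)--(3) are precisely the content of the theorem and you leave them as ``the main obstacle'' without giving the bijection. So as written there is no proof.

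The paper's argument is different from your step-by-step scheme and more direct: it bijects $\mathrm{SSFT}(\gamma,\mathfrak{d})$ straight to ordinary semi-standard skew-shifted fillings of $\gamma$ (i.e., to the extreme case where $\mathfrak{d}$ lies entirely above the shape), rather than inching $\mathfrak{d}$ over one diagonal at a time. The map is explicit: first invert the marking of every cell weakly above $\mathfrak{d}$; this almost yields a skew-shifted tableau, except that within each maximal ribbon of $i$'s and $i'$'s above $\mathfrak{d}$ the primes may sit in the wrong positions. A short bottom-to-top sweep through each such ribbon---swapping an $i'$ with the leftmost $i$ in its row, and an $i$ with the highest $i'$ in its column---repairs this, and the process is visibly invertible and weight-preserving. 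If you want to salvage your incremental approach, note that composing the paper's bijection for $\mathfrak{d}$ with the inverse for $\mathfrak{d}'$ would give your single-step map; but constructing that composite directly is no easier than the paper's global construction, which is why your outline stalls exactly there.
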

\begin{proof} We will prove this by giving a bijection weight preserving between SSFT($\gamma,\mathfrak{d}$) with semi-standard fillings of the skew-shifted shape $\gamma$. 

Let $S\in \emph{SSFT}(\gamma,\mathfrak{d})$. We start with inverting the markings of all cells weakly above the main diagonal. Each connected piece weakly above the main diagonal containing only $i$ or $i'$s forms a ribbon, with at most one $i'$ on each row (on the rightmost cell), and at most one $i$ on each column (on the lowest cell). The next step is to go from bottom to the top in the ribbon, flipping $i'$ with the leftmost $i$ of each row and flipping the $i$ with the highest $i'$ on each column.  As the algorithm corrects the ordering when the process is repeated for all $i$, we get with a semi-standard skew-shifted filling of $\gamma$.
\begin{figure}[H]
\scalebox{0.7}{
\centering
$\begin{ytableau}
i\\
i\\
i'&i'&i\\
\none&\none&i\\
\none&\none&i\\
\none&\none&i'&i'&i'&i\\
\end{ytableau}\rightarrow\quad\begin{ytableau}
i'\\
i'\\
i&i&i'\\
\none&\none&i'\\
\none&\none&i'\\
\none&\none&i&i&i&i'\\
\end{ytableau}\rightarrow\quad\begin{ytableau}
i'\\
i'\\
i&i&i'\\
\none&\none&i'\\
\none&\none&i'\\
\none&\none&\color{red} i'&i&i&\color{red} i\\
\end{ytableau}\rightarrow\quad\begin{ytableau}
i'\\
i'\\
\color{red}i'&i&\color{red}i\\
\none&\none&i'\\
\none&\none&i'\\
\none&\none& i'&i&i& i\\
\end{ytableau}$}
\label{fig:iribbon}
\end{figure}

The process can be inverted by inverting the markings weakly above the main diagonal again, and this time working our way from top to bottom on each ribbon, flipping any $i'$ with the lowest $i$ on columns and flipping any $i$ with the rightmost $i'$ in rows. 
\end{proof}

\begin{cor}The folded Q-functions are Schur Q-positive. In fact 
$$Q^f_{(\gamma,\mathfrak{d})}(X)=Q_{\lambda/\delta_n}(X)=\sum_{\epsilon}f^{\lambda}_{\epsilon,\delta_n}Q_{\epsilon}(X) $$
where $\lambda$ is a shifted shape with $\lambda/\delta_n=\gamma$ and $f^{\lambda}_{\epsilon,\delta_n}$ are the non-negative integers defined by:
$$P_{\epsilon}P_{\delta_n}=\sum_{\lambda}f^{\lambda}_{\epsilon,\delta_n}P_{\lambda}$$
\end{cor}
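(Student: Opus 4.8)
The plan is to build on the theorem just proved (that $Q^f_{(\gamma,\mathfrak{d})}(X)$ equals $Q_{\lambda/\delta_n}(X)$ after the weight-preserving bijection with semi-standard fillings of the skew-shifted shape) and then invoke the results collected in the preliminaries. First I would make the identification precise: given the folded diagram $(\gamma,\mathfrak{d})$, Proposition~\ref{prop:head}-style reasoning — actually the proposition stating that any folded diagram is uniquely a combination $\alpha\diamond\beta$ — lets me recognize $\gamma$ as $\lambda\backslash\delta_n$ for a genuine strict partition $\lambda$, where $\delta_n$ is the staircase of the appropriate size $n$ (the number of boxes of $\mathfrak{d}$, i.e. $size(\mathfrak{d})$) and $\lambda$ is obtained by gluing the staircase back in. I should check that this $\lambda$ is strict and that $\delta_n\subset\lambda$ as shifted shapes, which is immediate from the construction in the combination/decomposition proposition.

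Next, having the bijection SSFT$(\gamma,\mathfrak{d})\leftrightarrow SSShT(\lambda\backslash\delta_n)$ from the preceding theorem, and the fact that it is weight-preserving, I get directly
\[
Q^f_{(\gamma,\mathfrak{d})}(X)=\sum_{S\in SSFT(\gamma,\mathfrak{d})}X^{|S|}=\sum_{T\in SSShT(\lambda\backslash\delta_n)}X^{|T|}=Q_{\lambda\backslash\delta_n}(X).
\]
Then I apply Stembridge's skew expansion (Theorem~\ref{thm:Stembridge}): $Q_{\lambda\backslash\delta_n}(X)=\sum_{\epsilon}f^{\lambda}_{\epsilon,\delta_n}Q_{\epsilon}(X)$ with $f^{\lambda}_{\epsilon,\delta_n}\in\mathbb{N}$ and the same coefficients appearing in $Q_{\epsilon}Q_{\delta_n}=\sum_{\lambda}f^{\lambda}_{\epsilon,\delta_n}Q_{\lambda}$. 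To match the statement's normalization I would note $Q_\mu=2^{ht(\mu)}P_\mu$, so the structure constants transfer between the $Q$- and $P$-bases up to the powers of $2$ bookkeeping, yielding $P_{\epsilon}P_{\delta_n}=\sum_{\lambda}f^{\lambda}_{\epsilon,\delta_n}P_{\lambda}$ with exactly these $f^{\lambda}_{\epsilon,\delta_n}$; this is the standard fact that the $f$'s are simultaneously the skew and the product coefficients in either normalization. Schur Q-positivity is then immediate since each $f^{\lambda}_{\epsilon,\delta_n}\ge 0$.

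The only genuinely delicate point is the first one: verifying that the $\lambda$ produced by the decomposition proposition satisfies $\lambda\backslash\delta_{ht(\lambda)}=\gamma$ with $ht(\lambda)=n$, i.e. that the "specialized main diagonal" of the folded diagram really does correspond under the bijection to the staircase being removed, so that the skew-shifted shape of the target is literally $\lambda/\delta_n$ and not some shift of it. I expect this to be a short compatibility check: the inversion of the semistandard rules weakly above $\mathfrak{d}$ in the definition of SSFT is exactly what the transpose-and-paste construction of $\alpha\diamond\beta$ produces, and reading off rows above/below $\mathfrak{d}$ recovers the parts $\gamma_{t+1},\dots$ and $\gamma'_{k+t+1},\dots$ from that proposition. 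Everything after that is bookkeeping with the known expansions, so there is no real obstacle beyond making the identification of shapes airtight.
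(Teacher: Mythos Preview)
Your approach is correct and essentially what the paper intends: the corollary follows immediately from the preceding theorem (the weight-preserving bijection giving $Q^f_{(\gamma,\mathfrak{d})}=Q_{\gamma}$ as a skew-shifted $Q$-function) together with Stembridge's expansion (Theorem~\ref{thm:Stembridge}).

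One remark on your ``delicate point'': you are over-complicating it by trying to tie $n$ to $size(\mathfrak{d})$. The whole content of the preceding theorem is that $Q^f_{(\gamma,\mathfrak{d})}$ is \emph{independent} of $\mathfrak{d}$, so you are free to ignore where the specialized diagonal sits. The bijection in that theorem's proof lands on semi-standard skew-shifted fillings of the bare unshifted shape $\gamma$; to express this as $Q_{\lambda/\delta_n}$ you simply take $n=ht(\gamma)$ and $\lambda_i=\gamma_i+(n-i+1)$, which is strict because $\gamma$ is a partition. There is no compatibility to check between $\mathfrak{d}$ and the staircase $\delta_n$.
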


As the folded $P$ function depends on the size of the main diagonal, it is not independent of $\mathfrak{d}$. Instead, it tells us that  $Q^f_{\gamma}(X)$ is divisible by $2^d$, where $d$ is the size of the longest diagonal on $\gamma$.

\begin{cor}An unshifted shape and its conjugate have the same folded Q-function. In particular, for two shifted shapes $\alpha$ and $\beta$, $Q^f_{\alpha \diamond \beta}(X)=Q^f_{\beta \diamond \alpha}(X)$
\end{cor}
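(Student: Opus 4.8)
The plan is to realize the conjugation symmetry directly by a weight-preserving bijection on semi-standard folded tableaux, and then to obtain the ``in particular'' from the way $\diamond$ interacts with transposition. Throughout I would write $\tau(r,c)=(c,r)$ for the transpose of cells and record the elementary identity $diag(\tau(C))=2-diag(C)$; thus $\tau$ sends a diagonal of value $d_0$ to one of value $2-d_0$, it sends the underlying shape $\gamma$ to its conjugate $\gamma'$, and it interchanges ``weakly above $\mathfrak{d}$'' with ``weakly below $\mathfrak{d}^{t}$'', where $\mathfrak{d}^{t}=\tau(\mathfrak{d})$.

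For $S\in$ SSFT$(\gamma,\mathfrak{d})$ I would define $\tau(S)$ to be the filling of $\gamma'$ whose entry at $\tau(C)$ is the entry of $S$ at $C$, with all marks left unchanged. Because $\tau$ turns rows into columns, it carries the ``ordinary'' local rules (no repeated unmarked entry in a column, no repeated marked entry in a row) to the ``inverted'' ones and back; and because transposing exchanges ``weakly increasing along a row from left to right'' with ``weakly increasing along a column from bottom to top'', every monotonicity constraint is preserved. Combining this with the fact that $\tau$ swaps the two sides of the folded diagram, the cells of $S$ that obeyed the ordinary rules are sent exactly onto the cells of $\tau(S)$ that must obey the inverted rules in $(\gamma',\mathfrak{d}^{t})$, and conversely, so $\tau$ is a bijection SSFT$(\gamma,\mathfrak{d})\to$ SSFT$(\gamma',\mathfrak{d}^{t})$; it is weight-preserving since it only moves cells. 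The step I expect to need the most care is the behaviour on the cells lying on $\mathfrak{d}$ itself, where the two regions abut and the ``ordinary versus inverted'' prescription could a priori conflict. Here one runs through the at most four neighbours of such a cell and checks that $\tau$ carries each comparison to the corresponding comparison in the target tableau (a comparison with a right neighbour becoming one with an upper neighbour, and so on), so no conflict occurs. This yields $Q^f_{(\gamma,\mathfrak{d})}(X)=Q^f_{(\gamma',\mathfrak{d}^{t})}(X)$, and since by the previous theorem neither side depends on the choice of diagonal, $Q^f_{\gamma}(X)=Q^f_{\gamma'}(X)$.

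For the ``in particular'', recall that $\alpha\diamond\beta$ is built, after the symmetric trimming of Step~I, by transposing one of the two shapes, placing it on one side of a diagonal and placing the other shape on the other side, and glueing. Applying $\tau$ to the resulting underlying shape simultaneously transposes each of the two glued pieces and interchanges the two sides of the diagonal — which is precisely the recipe producing $\beta\diamond\alpha$ from the pair $(\alpha,\beta)$. Hence the underlying shapes of $\alpha\diamond\beta$ and $\beta\diamond\alpha$ are conjugate, and $Q^f_{\alpha\diamond\beta}(X)=Q^f_{\beta\diamond\alpha}(X)$ follows from the first part.
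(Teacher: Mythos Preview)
Your proof is correct and follows the same idea as the paper's: a transpose bijection on semi-standard folded tableaux combined with the diagonal-independence theorem, and then the observation that swapping the roles of $\alpha$ and $\beta$ in the $\diamond$ construction conjugates the underlying shape. The paper streamlines one step you might appreciate: rather than treating a general $\mathfrak{d}$ and checking the four neighbours of each diagonal cell, it simply chooses $\mathfrak{d}$ entirely above $\gamma$ (so every cell obeys the ordinary rules), whence conjugation is immediately a bijection onto $\mathrm{SSFT}(\gamma^{T},\mathfrak{d}')$ with $\mathfrak{d}'$ below the shape (every cell obeying the inverted rules), and the diagonal case analysis disappears.
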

\begin{proof}
For an unshifted shape $\gamma$, the conjugation operation gives a bijection of folded tableaux $(\gamma,\mathfrak{d})$ with $\mathfrak{d}$ above the shape and folded tableaux $(\gamma^T,\mathfrak{d}')$ with $\mathfrak{d}'$ below the shape. As the folded Q-function is independent of the placement of the specialized diagonal, we have: $Q^f_{\gamma}(X)=Q^f_{\gamma^T}(X)$
\end{proof}

\section{Quotients of Ribbon Tableaux}
\label{sec:quotients}

On this section, we will introduce the $k$-quotient for a shifted diagram, and we will give a bijection between the $k$-ribbon tableaux and the fillings of its $k$-quotient. Our  definition of the $k$-quotient extends the one given by Morris and Yaseen in\cite{MR809494} by specialized diagonals which we will use for a direct bijection between semi-standard $k$-ribbon tableaux and semi-standard fillings of its $k$-quotient.

\begin{defn} The $k$-quotient of a shifted shape $\lambda$ with $k$-abacus representation $(\alpha^{(1)},\alpha^{(2)},\ldots,\alpha^{(k)})$ will consist of a $\lfloor k/2\rfloor$ folded shapes and one shifted shape, defined as follows: 

$\overline{\Phi^k}(\lambda)=
\begin{cases}(\alpha^{(1)} \diamond \alpha^{(k-1)},\alpha^{(2)} \diamond \alpha^{(k-2)},\ldots,\alpha^{((k-1)/2)} \diamond \alpha^{((k+1)/2)},\alpha^{(k)})& \text{k odd}\\

(\alpha^{(1)} \diamond \alpha^{(k-1)},\alpha^{(2)} \diamond \alpha^{(k-2)},\ldots,\alpha^{(k/2-1)} \diamond \alpha^{(k/2+1)},\alpha^{(k/2)} \diamond \varnothing ,\alpha^{(k)})& \text{k even}
\end{cases}$
\end{defn}

The strict partition $\lambda=\{16, 11, 10, 9, 8, 7, 4, 3, 1\}$ has the $5$-quotient:
$$\overline{\Phi^5}(\lambda)=\{(4,3,1)\diamond(2,1),(2)\diamond(2,1),(2)\}=\{((3,3,2),\mathfrak{d}_1),((3),\mathfrak{d}_2) ,(2)\}$$
where $\mathfrak{d}_1$ and $\mathfrak{d}_2$ are the specialized diagonals given by the combination operation.

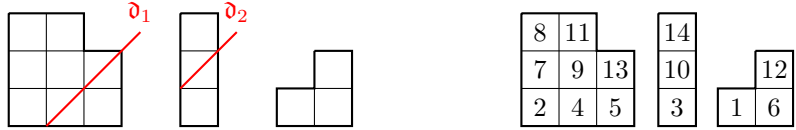
\begin{figure}[ht]
\centering
\raisebox{-.6cm}{
\begin{tikzpicture}[scale=1/2]
%outline
\draw[thick] (0,0)--(2,0)--(2,-1)--(3,-1)--(3,-3)--(0,-3)--(0,0);
%inside
\draw (0,-1)--(2,-1) (0,-2)--(3,-2);
\draw (1,0)--(1,-3) (2,-1)--(2,-3);
\draw[red, thick] (1,-3)--(3.5,-0.5) node[above, red] {$\mathfrak{d}_1$};
\end{tikzpicture}}
\raisebox{-0.6cm}{
\begin{tikzpicture}[scale=1/2]
%outline
\draw[thick] (0,0)--(1,0)--(1,-3)--(0,-3)--(0,0);
%inside
\draw (0,-1)--(1,-1) (0,-2)--(1,-2);
\draw[red,thick] (0,-2)--(1.5,-0.5) node[above, red] {$\mathfrak{d}_2$};
\end{tikzpicture}}
\raisebox{-0.6cm}{
\begin{tikzpicture}[scale=1/2]
%outline
\draw[thick] (0,0)--(1,0)--(1,-2)--(-1,-2)--(-1,-1)--(0,-1)--(0,0);
%inside
\draw (0,-2)--(0,-1)--(1,-1);
\end{tikzpicture}}\qquad \qquad \qquad
\raisebox{-0.6cm}{
\begin{tikzpicture}[scale=1/2]
%outline
\draw[thick] (0,0)--(2,0)--(2,-1)--(3,-1)--(3,-3)--(0,-3)--(0,0);
%inside
\draw (0,-1)--(2,-1) (0,-2)--(3,-2);
\draw (1,0)--(1,-3) (2,-1)--(2,-3);

%nodes
\node  at (0.5,-0.5) {8};
\node  at (1.5,-0.5) {11};
\node  at (0.5,-1.5) {7};
\node  at (1.5,-1.5) {9};
\node  at (2.5,-1.5) {13};
\node  at (0.5,-2.5) {2};
\node  at (1.5,-2.5) {4};
\node  at (2.5,-2.5) {5};
\end{tikzpicture}}
\raisebox{-0.6cm}{
\begin{tikzpicture}[scale=1/2]
%outline
\draw[thick] (0,0)--(1,0)--(1,-3)--(0,-3)--(0,0);
%inside
\draw (0,-1)--(1,-1) (0,-2)--(1,-2);
%nodes
\node at (0.5,-0.5) {14};
\node at (0.5,-1.5) {10};
\node at (0.5,-2.5) {3};
\end{tikzpicture}}
\raisebox{-0.6cm}{
\begin{tikzpicture}[scale=1/2]
%outline
\draw[thick] (0,0)--(1,0)--(1,-2)--(-1,-2)--(-1,-1)--(0,-1)--(0,0);
%inside
\draw (0,-2)--(0,-1)--(1,-1);
%nodes
\node at (0.5,-0.5) {12};
\node at (0.5,-1.5) {6};
\node at (-0.5,-1.5) {1};
\end{tikzpicture}}
\label{fig:bigq}
\caption{The $5$-quotient of $\lambda=(16, 11, 10, 9, 8, 7, 4, 3, 1)$ and a standard filling.}
\end{figure}

\ytableausetup{boxsize=4mm}

\begin{figure}[p]
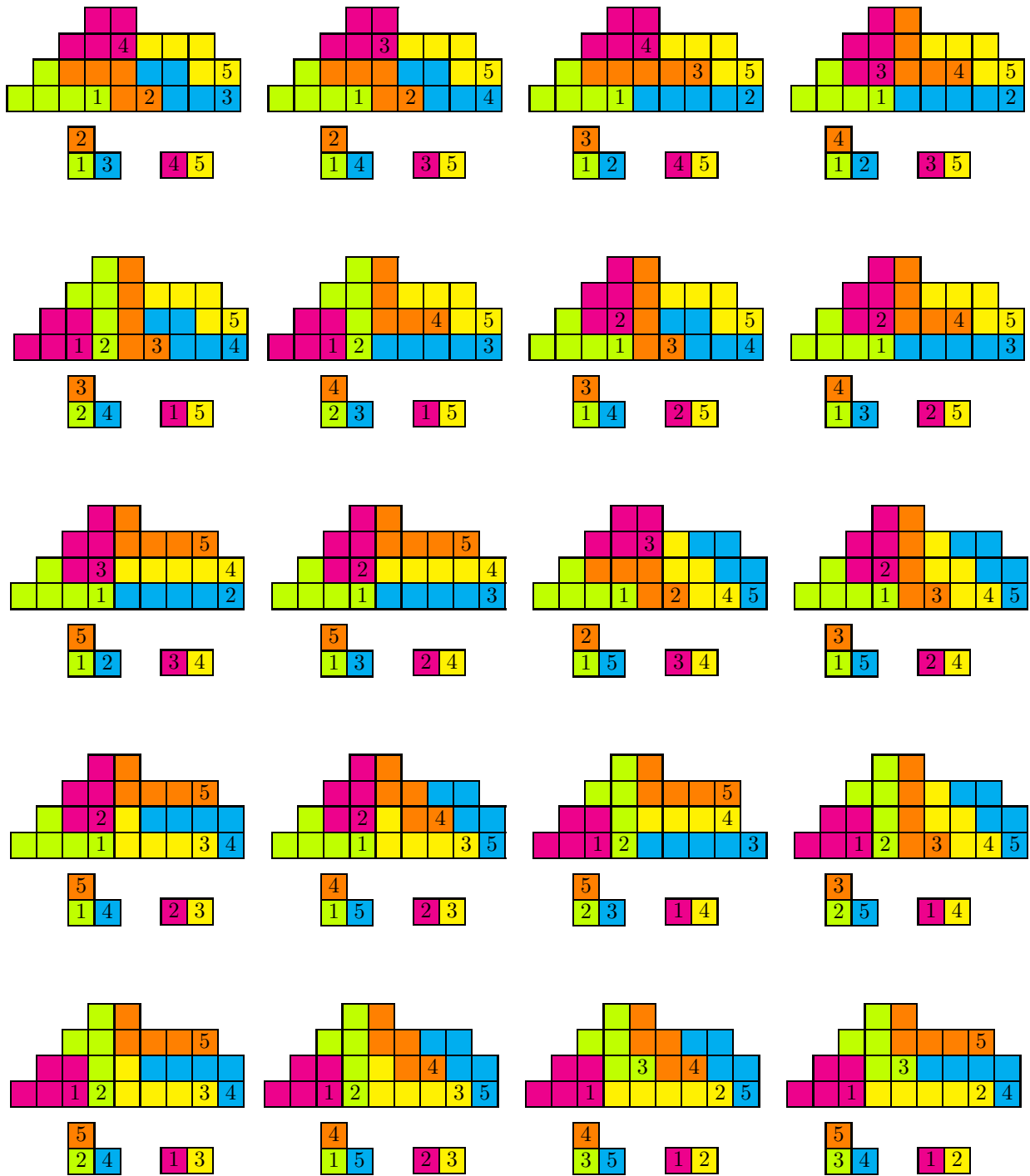
\label{fig:fullpage}
 \begin{ytableau}
   \none & \none & \none & *(magenta) & *(magenta)  \\
   \none &\none & *(magenta) & *(magenta)  & *(magenta)4 &*(yellow)&*(yellow) &*(yellow)\\
   \none &*(lime) &*(orange)&*(orange)&*(orange)&*(cyan)&*(cyan) &*(yellow)&*(yellow)5\\
   *(lime) &*(lime)&*(lime)&*(lime)1&*(orange)&*(orange)2&*(cyan)&*(cyan)&*(cyan)3\\
  \end{ytableau} \hspace{0.2cm} \begin{ytableau}
      \none & \none & \none & *(magenta) & *(magenta)  \\
   \none &\none & *(magenta) & *(magenta)  & *(magenta)3 &*(yellow)&*(yellow) &*(yellow)\\
   \none &*(lime) &*(orange)&*(orange)&*(orange)&*(cyan)&*(cyan) &*(yellow)&*(yellow)5\\
   *(lime) &*(lime)&*(lime)&*(lime)1&*(orange)&*(orange)2&*(cyan)&*(cyan)&*(cyan)4\\
  \end{ytableau} \hspace{0.2cm} \begin{ytableau}
     \none & \none & \none & *(magenta) & *(magenta)  \\
   \none &\none & *(magenta) & *(magenta)  & *(magenta)4 &*(yellow)&*(yellow) &*(yellow)\\
   \none &*(lime) &*(orange)&*(orange)&*(orange)&*(orange)&*(orange)3&*(yellow)&*(yellow)5\\
   *(lime) &*(lime)&*(lime)&*(lime)1&*(cyan)&*(cyan)&*(cyan)&*(cyan)&*(cyan)2\\
  \end{ytableau}  \hspace{0.2cm} \begin{ytableau}
   \none & \none & \none & *(magenta) & *(orange)  \\
   \none &\none & *(magenta) & *(magenta)& *(orange)&*(yellow)&*(yellow)&*(yellow)\\
      \none &*(lime) & *(magenta) & *(magenta)3 & *(orange)& *(orange)& *(orange)4&*(yellow)&*(yellow)5\\
 *(lime) &*(lime)&*(lime)&*(lime)1&*(cyan)&*(cyan)&*(cyan)&*(cyan)&*(cyan)2\\
  \end{ytableau}  
  \vspace{0.2cm}
  
  \begin{ytableau}
  *(orange)2\\
  *(lime)1&*(cyan)3\\
  \end{ytableau} \hspace{4mm} \raisebox{-0.4cm}{\begin{ytableau}
  *(magenta)4&*(yellow)5\\
  \end{ytableau}}
  \hspace{14.8mm} \begin{ytableau}
  *(orange)2\\
  *(lime)1&*(cyan)4\\
  \end{ytableau} \hspace{4mm} \raisebox{-0.4cm}{\begin{ytableau}
  *(magenta)3&*(yellow)5\\
  \end{ytableau}}
    \hspace{14.8mm} \begin{ytableau}
  *(orange)3\\
  *(lime)1&*(cyan)2\\
  \end{ytableau} \hspace{4mm} \raisebox{-0.4cm}{\begin{ytableau}
  *(magenta)4&*(yellow)5\\
  \end{ytableau}}  \hspace{14.8mm} \begin{ytableau}
  *(orange)4\\
  *(lime)1&*(cyan)2\\
  \end{ytableau} \hspace{4mm} \raisebox{-0.4cm}{\begin{ytableau}
  *(magenta)3&*(yellow)5\\
  \end{ytableau}}
  
  \vspace{1.2cm}

  \begin{ytableau}
   \none & \none & \none &*(lime)&*(orange) \\
   \none &\none &*(lime)&*(lime)&*(orange) &*(yellow)&*(yellow)&*(yellow)\\
   \none &*(magenta)&*(magenta)&*(lime)&*(orange) &*(cyan)&*(cyan)&*(yellow)&*(yellow)5\\
  *(magenta)&*(magenta)&*(magenta)1&*(lime)2&*(orange) &*(orange)3 &*(cyan)&*(cyan)&*(cyan)4\\
  \end{ytableau} \hspace{0.2cm}\begin{ytableau}
\none & \none & \none &*(lime)&*(orange) \\
   \none &\none &*(lime)&*(lime)&*(orange)&*(yellow)&*(yellow)&*(yellow) \\
   \none &*(magenta)&*(magenta)&*(lime)&*(orange) &*(orange) &*(orange) 4&*(yellow)&*(yellow)5\\
 *(magenta)&*(magenta)&*(magenta)1&*(lime)2&*(cyan)&*(cyan)&*(cyan)&*(cyan)&*(cyan)3\\
  \end{ytableau}  \hspace{0.2cm} \begin{ytableau}
     \none & \none & \none &*(magenta)&*(orange) \\
   \none &\none &*(magenta)&*(magenta)&*(orange) &*(yellow) &*(yellow) &*(yellow) \\
   \none &*(lime) &*(magenta)&*(magenta)2&*(orange)&*(cyan)&*(cyan) &*(yellow)&*(yellow)5\\
 *(lime) &*(lime) &*(lime) &*(lime) 1&*(orange) &*(orange) 3&*(cyan)&*(cyan)&*(cyan)4 \\
  \end{ytableau} \hspace{0.2cm} \begin{ytableau}
     \none & \none & \none &*(magenta)&*(orange) \\
   \none &\none &*(magenta)&*(magenta)&*(orange)  &*(yellow) &*(yellow) &*(yellow)\\
   \none &*(lime) &*(magenta)&*(magenta)2&*(orange) &*(orange) &*(orange) 4 &*(yellow) &*(yellow)5\\
  *(lime) &*(lime) &*(lime) &*(lime) 1&*(cyan)&*(cyan)&*(cyan)&*(cyan)&*(cyan)3\\
  \end{ytableau}

    \vspace{0.2cm}
  
  \begin{ytableau}
  *(orange)3\\
  *(lime)2&*(cyan)4\\
  \end{ytableau} \hspace{4mm} \raisebox{-0.4cm}{\begin{ytableau}
  *(magenta)1&*(yellow)5\\
  \end{ytableau}}
  \hspace{14.8mm} \begin{ytableau}
  *(orange)4\\
  *(lime)2&*(cyan)3\\
  \end{ytableau} \hspace{4mm} \raisebox{-0.4cm}{\begin{ytableau}
  *(magenta)1&*(yellow)5\\
  \end{ytableau}}
    \hspace{14.8mm} \begin{ytableau}
  *(orange)3\\
  *(lime)1&*(cyan)4\\
  \end{ytableau} \hspace{4mm} \raisebox{-0.4cm}{\begin{ytableau}
  *(magenta)2&*(yellow)5\\
  \end{ytableau}}  \hspace{14.8mm} \begin{ytableau}
  *(orange)4\\
  *(lime)1&*(cyan)3\\
  \end{ytableau} \hspace{4mm} \raisebox{-0.4cm}{\begin{ytableau}
  *(magenta)2&*(yellow)5\\
  \end{ytableau}}
  
  \vspace{1.2cm}
  %line3:

  \begin{ytableau}
    \none & \none & \none &*(magenta)&*(orange)\\
   \none &\none &*(magenta)&*(magenta)&*(orange)&*(orange)&*(orange)&*(orange)5\\
   \none  &*(lime) &*(magenta)&*(magenta)3&*(yellow)&*(yellow)&*(yellow)&*(yellow)&*(yellow)4\\
  *(lime) &*(lime) &*(lime) &*(lime)1&*(cyan)&*(cyan)&*(cyan)&*(cyan)&*(cyan)2 \\
  \end{ytableau} \hspace{0.2cm} \begin{ytableau}
      \none & \none & \none &*(magenta)&*(orange)\\
   \none &\none &*(magenta)&*(magenta)&*(orange)&*(orange)&*(orange)&*(orange)5\\
    \none  &*(lime)&*(magenta)&*(magenta)2 &*(yellow)&*(yellow)&*(yellow)&*(yellow)&*(yellow)4\\
  *(lime) &*(lime) &*(lime) &*(lime)1 &*(cyan)&*(cyan)&*(cyan)&*(cyan)&*(cyan)3\\
  \end{ytableau} \hspace{0.2cm} \begin{ytableau}
  \none & \none & \none &*(magenta)&*(magenta)\\
  \none &\none &*(magenta)&*(magenta)&*(magenta)3&*(yellow)&*(cyan)&*(cyan)\\
    \none  &*(lime) &*(orange)&*(orange)&*(orange)&*(yellow)&*(yellow)&*(cyan)&*(cyan)\\
  *(lime) &*(lime) &*(lime) &*(lime)1 &*(orange)&*(orange)2&*(yellow)&*(yellow)4&*(cyan)5\\
  \end{ytableau} \hspace{0.2cm} \begin{ytableau}
   \none & \none & \none &*(magenta)&*(orange)\\
   \none &\none &*(magenta)&*(magenta)&*(orange)&*(yellow)&*(cyan)&*(cyan)\\
    \none  &*(lime) &*(magenta)&*(magenta)2&*(orange)&*(yellow)&*(yellow)&*(cyan)&*(cyan)\\
  *(lime) &*(lime) &*(lime) &*(lime)1&*(orange)&*(orange)3 &*(yellow)&*(yellow)4&*(cyan)5\\
  \end{ytableau} 
  
   \vspace{0.2cm}
  
  \begin{ytableau}
  *(orange)5\\
  *(lime)1&*(cyan)2\\
  \end{ytableau} \hspace{4mm} \raisebox{-0.4cm}{\begin{ytableau}
  *(magenta)3&*(yellow)4\\
  \end{ytableau}}
  \hspace{14.8mm} \begin{ytableau}
  *(orange)5\\
  *(lime)1&*(cyan)3\\
  \end{ytableau} \hspace{4mm} \raisebox{-0.4cm}{\begin{ytableau}
  *(magenta)2&*(yellow)4\\
  \end{ytableau}}
    \hspace{14.8mm} \begin{ytableau}
  *(orange)2\\
  *(lime)1&*(cyan)5\\
  \end{ytableau} \hspace{4mm} \raisebox{-0.4cm}{\begin{ytableau}
  *(magenta)3&*(yellow)4\\
  \end{ytableau}}  \hspace{14.8mm} \begin{ytableau}
  *(orange)3\\
  *(lime)1&*(cyan)5\\
  \end{ytableau} \hspace{4mm} \raisebox{-0.4cm}{\begin{ytableau}
  *(magenta)2&*(yellow)4\\
  \end{ytableau}}
  
  \vspace{1.2cm}
   
   %line4
   
  % &*(lime) &*(magenta) &*(orange) &*(cyan) &*(yellow)
    \begin{ytableau}
      \none & \none & \none&*(magenta)  &*(orange)\\
   \none &\none&*(magenta)&*(magenta) &*(orange) &*(orange) &*(orange) &*(orange)5\\
   \none  &*(lime)&*(magenta)&*(magenta)2  &*(yellow)&*(cyan)&*(cyan)&*(cyan)&*(cyan)\\
  *(lime) &*(lime) &*(lime) &*(lime)1  &*(yellow) &*(yellow) &*(yellow) &*(yellow)3&*(cyan)4\\
  \end{ytableau} \hspace{0.2cm} \begin{ytableau}
   \none & \none & \none&*(magenta)  &*(orange)\\
   \none &\none&*(magenta)&*(magenta) &*(orange) &*(orange) &*(cyan)&*(cyan)\\
   \none  &*(lime) &*(magenta)&*(magenta)2 &*(yellow) &*(orange) &*(orange)4&*(cyan)&*(cyan)\\
  *(lime) &*(lime) &*(lime) &*(lime)1  &*(yellow) &*(yellow) &*(yellow) &*(yellow)3&*(cyan)5\\
  \end{ytableau} \hspace{0.2cm} \begin{ytableau}
     \none & \none & \none &*(lime) &*(orange)\\
   \none &\none &*(lime)&*(lime) &*(orange) &*(orange) &*(orange) &*(orange)5\\
   \none&*(magenta)&*(magenta)&*(lime)  &*(yellow) &*(yellow) &*(yellow) &*(yellow)4\\
 *(magenta)&*(magenta)&*(magenta)1 &*(lime)2&*(cyan)&*(cyan)&*(cyan)&*(cyan)&*(cyan)3\\
  \end{ytableau}  \hspace{0.2cm} \begin{ytableau}
   \none & \none & \none&*(lime)  &*(orange)\\
   \none &\none &*(lime)&*(lime) &*(orange)&*(yellow)&*(cyan)&*(cyan)\\
   \none&*(magenta)&*(magenta) &*(lime) &*(orange)&*(yellow)&*(yellow)&*(cyan)&*(cyan)\\
 *(magenta)&*(magenta)&*(magenta)1 &*(lime)2 &*(orange) &*(orange)3&*(yellow)&*(yellow)4&*(cyan)5\\
  \end{ytableau}  

  \vspace{0.2cm}
  \begin{ytableau}
  *(orange)5\\
  *(lime)1&*(cyan)4\\
  \end{ytableau} \hspace{4mm} \raisebox{-0.4cm}{\begin{ytableau}
  *(magenta)2&*(yellow)3\\
  \end{ytableau}}
  \hspace{14.8mm} \begin{ytableau}
  *(orange)4\\
  *(lime)1&*(cyan)5\\
  \end{ytableau} \hspace{4mm} \raisebox{-0.4cm}{\begin{ytableau}
  *(magenta)2&*(yellow)3\\
  \end{ytableau}}
    \hspace{14.8mm} \begin{ytableau}
  *(orange)5\\
  *(lime)2&*(cyan)3\\
  \end{ytableau} \hspace{4mm} \raisebox{-0.4cm}{\begin{ytableau}
  *(magenta)1&*(yellow)4\\
  \end{ytableau}}  \hspace{14.8mm} \begin{ytableau}
  *(orange)3\\
  *(lime)2&*(cyan)5\\
  \end{ytableau} \hspace{4mm} \raisebox{-0.4cm}{\begin{ytableau}
  *(magenta)1&*(yellow)4\\
  \end{ytableau}}
  
  \vspace{1.2cm}

    \begin{ytableau}
    \none & \none & \none &*(lime) &*(orange)\\
   \none &\none&*(lime)&*(lime)  &*(orange) &*(orange) &*(orange) &*(orange)5\\
   \none&*(magenta)&*(magenta) &*(lime) &*(yellow)&*(cyan)&*(cyan)&*(cyan)&*(cyan)\\
 *(magenta)&*(magenta)&*(magenta)1 &*(lime)2 &*(yellow) &*(yellow) &*(yellow) &*(yellow)3&*(cyan)4\\
  \end{ytableau}\hspace{0.2cm}  \begin{ytableau}
   \none & \none & \none &*(lime) &*(orange)\\
   \none &\none &*(lime)&*(lime) &*(orange) &*(orange)&*(cyan)&*(cyan)\\
   \none&*(magenta)&*(magenta)&*(lime)  &*(yellow) &*(orange) &*(orange)4&*(cyan)&*(cyan)\\
 *(magenta)&*(magenta)&*(magenta)1&*(lime)2  &*(yellow) &*(yellow) &*(yellow) &*(yellow)3&*(cyan)5\\
  \end{ytableau} \hspace{0.2cm} \begin{ytableau}
   \none & \none & \none &*(lime) &*(orange)\\
   \none &\none &*(lime)&*(lime) &*(orange) &*(orange)&*(cyan)&*(cyan)\\
   \none&*(magenta)&*(magenta) &*(lime)&*(lime)3 &*(orange) &*(orange)4&*(cyan)&*(cyan)\\
 *(magenta)&*(magenta)&*(magenta)1 &*(yellow) &*(yellow) &*(yellow) &*(yellow) &*(yellow)2 &*(cyan)5\\
  \end{ytableau}  \hspace{0.2cm} \begin{ytableau}
     \none & \none & \none &*(lime) &*(orange)\\
   \none &\none &*(lime)&*(lime) &*(orange) &*(orange) &*(orange) &*(orange)5\\
   \none&*(magenta)&*(magenta)&*(lime)&*(lime)3 &*(cyan)&*(cyan)&*(cyan)&*(cyan)\\
 *(magenta)&*(magenta)&*(magenta)1  &*(yellow) &*(yellow) &*(yellow) &*(yellow) &*(yellow)2&*(cyan)4\\
  \end{ytableau}  
  \vspace{0.2cm}

 \begin{ytableau}
  *(orange)5\\
  *(lime)2&*(cyan)4\\
  \end{ytableau} \hspace{4mm} \raisebox{-0.4cm}{\begin{ytableau}
  *(magenta)1&*(yellow)3\\
  \end{ytableau}}
  \hspace{14.8mm} \begin{ytableau}
  *(orange)4\\
  *(lime)1&*(cyan)5\\
  \end{ytableau} \hspace{4mm} \raisebox{-0.4cm}{\begin{ytableau}
  *(magenta)2&*(yellow)3\\
  \end{ytableau}}
    \hspace{14.8mm} \begin{ytableau}
  *(orange)4\\
  *(lime)3&*(cyan)5\\
  \end{ytableau} \hspace{4mm} \raisebox{-0.4cm}{\begin{ytableau}
  *(magenta)1&*(yellow)2\\
  \end{ytableau}}  \hspace{14.8mm} \begin{ytableau}
  *(orange)5\\
  *(lime)3&*(cyan)4\\
  \end{ytableau} \hspace{4mm} \raisebox{-0.4cm}{\begin{ytableau}
  *(magenta)1&*(yellow)2\\
  \end{ytableau}}
  
  \vspace{1.2cm}
 \caption{Correspondence between the standard $5$-ribbon tableaux of shape $(9,8,6,2)$ and the standard fillings of its quotient$\{(2,1),(2),\varnothing\}$}
 % &*(lime) &*(magenta) &*(orange) &*(cyan) &*(yellow),
 \end{figure}
 \ytableausetup{nosmalltableaux}

We call  a simultaneous semi-standard filling of the $\lfloor k/2\rfloor$ folded shapes and the shifted shape a \emph{semi-standard filling} of the $k$-quotient. If this filling uses each number from $1$ to $n$ exactly once, unmarked, it will be called a \emph{standard filling}.

\begin{thm} There is a bijection $\Phi^k_{\lambda}$ between standard $k$-ribbon tableaux of shape $\lambda$ and standard fillings of its $k$-quotient preserving diagonal values (two ribbons that have the same diagonal value will be mapped to the same shape and diagonal in the quotient). 
\end{thm}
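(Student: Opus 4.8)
The plan is to transport the problem to the $k$-abacus and build $\Phi^k_\lambda$ one runner at a time. By Theorem~\ref{cor:ribboncorrespondence}, a standard $k$-ribbon tableau of $\lambda$ is the same data as a chain of abacus moves carrying the $k$-abacus of $\lambda$ down to that of its $k$-core, move $i$ recording the ribbon $A_i$; read backwards, the labels $1,\dots,n$ describe a way of building $\lambda$'s abacus up from the core's by reverse moves. A Type I move takes place inside a single runner $a_j$, and a Type II move on a conjugate pair $\{a_j,a_{k-j}\}$; the top-row bead of a runner $a_j$ can be deleted by a Type I move only for $j=k$, so for every other $j$ that bead can leave only through a Type II move. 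Consequently the chain decouples into independent sub-chains — one on $a_k$, one on the self-conjugate runner $a_{k/2}$ when $k$ is even, and one on each pair $\{a_j,a_{k-j}\}$ with $j<k/2$ — which may be interleaved arbitrarily as long as the labels $1,\dots,n$ are used globally; this matches the ``simultaneous filling'' structure of the $k$-quotient.

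Next I would match each sub-chain with (reverse) standard fillings of the corresponding quotient shape. The sub-chain on $a_k$ is a chain of $1$-abacus moves dismantling the shifted shape $\alpha^{(k)}$; since Proposition~\ref{prop:remove} forbids double $1$-ribbons, every such move deletes a single corner cell, so the sub-chain is precisely a standard filling of $\alpha^{(k)}$ read in reverse. For $k$ even the sub-chain on $a_{k/2}$ consists only of Type I slides and carries $\alpha^{(k/2)}$ down to the staircase $\delta_{\mathrm{ht}(\alpha^{(k/2)})}$; the cells it deletes are exactly the cells of the underlying shape of $\alpha^{(k/2)}\diamond\varnothing$, so the chain is a reverse standard filling of that folded shape. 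For $j<k/2$, Type I slides on $a_j$ delete corner cells of $\alpha^{(j)}$, Type I slides on $a_{k-j}$ delete corner cells of $\alpha^{(k-j)}$, and each Type II move deletes \emph{simultaneously} the size-one part (the bottom-row main-diagonal cell) of $\alpha^{(j)}$ and of $\alpha^{(k-j)}$. Unwinding the combination operation $\diamond$ — transpose the first shape, truncate to a common diagonal length, glue along main diagonals — these three families correspond to the cells strictly above $\mathfrak{d}$, strictly below $\mathfrak{d}$, and on $\mathfrak{d}$ in $\alpha^{(j)}\diamond\alpha^{(k-j)}$; a box count (moves on the pair versus cells of the folded shape) confirms the totals agree, so that altogether $n$ equals the number of cells of the $k$-quotient.

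I would then promote this cell correspondence to a bijection by checking the order conditions. Reading in the build-up direction, a reverse Type I move is legal iff the cell it adds is an addable corner of $\alpha^{(j)}$ (resp.\ $\alpha^{(k-j)}$), which — because of the transpose in $\diamond$ — is exactly the condition that its left-neighbor and below-neighbor in the folded diagram have already been filled; a reverse Type II move is legal iff neither $\alpha^{(j)}$ nor $\alpha^{(k-j)}$ currently carries a size-one part, which is exactly the condition that the $\mathfrak{d}$-cell it adds has its along-$\mathfrak{d}$ row-neighbor (in $\alpha^{(k-j)}$) and column-neighbor (in transposed $\alpha^{(j)}$) already filled. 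Hence legal chains of reverse moves correspond bijectively to linear extensions of the disjoint union of the cell posets of the $\lfloor k/2\rfloor$ folded shapes and the shifted shape $\alpha^{(k)}$, i.e.\ to standard fillings of the $k$-quotient; the inverse of $\Phi^k_\lambda$ reads a standard filling in label order and is legal precisely because distinct quotient shapes occupy disjoint runners while a conjugate pair stays synchronized by the Type II condition. Finally, diagonal values are preserved essentially by construction: the runner a ribbon touches is the residue mod $k$ of the diagonal of its head, the bead-level of the move recovers the remaining data, and together these name exactly the shape and diagonal of the image cell (in particular all double ribbons of a fixed size land on the special diagonal $\mathfrak{d}$ of the relevant folded shape).

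The delicate part is the conjugate-pair analysis underlying the last two paragraphs: one must check that the transpose built into $\diamond$ is precisely what turns the $1$-abacus dynamics on $\alpha^{(j)}$ — which naturally act on the rows of $\alpha^{(j)}$ — into the column direction above $\mathfrak{d}$, and that the Type II availability condition on the $k$-abacus matches cell-by-cell with the addability of the next $\mathfrak{d}$-cell. One also has to verify the two runners stay in phase: each Type II move removes the \emph{current} smallest part of each of $\alpha^{(j)}$ and $\alpha^{(k-j)}$, so $\mathfrak{d}$ is built up corner-outward and ends with exactly $\min(\mathrm{ht}\,\alpha^{(j)},\mathrm{ht}\,\alpha^{(k-j)})$ cells, matching the common diagonal length produced by Step~I of the combination — which is what makes the whole gluing coherent. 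Granting this, the $a_k$ and $a_{k/2}$ pieces and the interleaving of labels are routine.
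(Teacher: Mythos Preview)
Your approach is essentially the same as the paper's: decompose a standard $k$-ribbon tableau into a labelled chain of abacus moves via Theorem~\ref{cor:ribboncorrespondence}, split the chain along the runner $a_k$ and the conjugate pairs $\{a_j,a_{k-j}\}$ (treating $a_{k/2}$ as paired with an empty runner when $k$ is even), and then identify each sub-chain with a standard filling of the corresponding quotient shape by reading bead slides as corner-cell removals and Type~II moves as simultaneous main-diagonal removals. The paper organizes this as two short claims (one for $a_k$, one for a pair $(a_i,a_{k-i})$) and handles the $\delta_q$ truncation and the self-conjugate runner in a sentence each; you spell out the order/addability verification and the role of the transpose in $\diamond$ more explicitly, but the argument and the diagonal-preservation bookkeeping are the same.
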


\begin{figure}[ht]
\centering

\begin{tikzpicture}[scale=7/15]
%outline
\draw[thick] (0,0)--(2,0)--(2,-1)--(3,-1)--(3,-3)--(0,-3)--(0,0);
%inside
\draw (0,-1)--(2,-1) (0,-2)--(3,-2);
\draw (1,0)--(1,-3) (2,-1)--(2,-3);

%nodes
\node at (0.5,-0.5) {8};
\node at (1.5,-0.5) {11};
\node at (0.5,-1.5) {7};
\node at (1.5,-1.5) {9};
\node at (2.5,-1.5) {13};
\node at (0.5,-2.5) {2};
\node at (1.5,-2.5) {4};
\node at (2.5,-2.5) {5};
\end{tikzpicture}\quad
\begin{tikzpicture}[scale=4/7]
%outline
\draw[thick] (0,0)--(1,0)--(1,-3)--(0,-3)--(0,0);
%inside
\draw (0,-1)--(1,-1) (0,-2)--(1,-2);

%nodes
\node at (0.5,-0.5) {14};
\node at (0.5,-1.5) {10};
\node at (0.5,-2.5) {3};

\end{tikzpicture}\quad
\begin{tikzpicture}[scale=4/7]
%outline
\draw[thick] (0,0)--(1,0)--(1,-2)--(-1,-2)--(-1,-1)--(0,-1)--(0,0);
%inside
\draw (0,-2)--(0,-1)--(1,-1);

%nodes
\node at (0.5,-0.5) {12};
\node at (0.5,-1.5) {6};
\node at (-0.5,-1.5) {1};
\end{tikzpicture}\qquad
\begin{tikzpicture}[scale=3/7]
%outline
\draw[thick] (0,0)--(0,-1)--(-1,-1)--(-1,-2)--(-2,-2)--(-2,-3)--(-3,-3)--(-3,-4)--(-4,-4)--(-4,-5)--(-5,-5)--(-5,-6)--(-6,-6)--(-6,-7)--(-7,-7)--(-7,-8)--(-8,-8)--(-8,-9)--(-9,-9)--(-9,-10)--(7,-10)--(7,-9)--(3,-9)--(3,-4)--(2,-4)--(2,-1)--(1,-1)--(1,0)--(0,0);
%ribbons
\draw (1,-1)--(1,-6)--(2,-6)--(2,-9)--(3,-9);
\draw (3,-5)--(-1,-5)--(-1,-3)--(-2,-3) (2,-10)--(2,-9) (-1,-2)--(0,-2)--(0,-4)--(1,-4);
\draw (-1,-5)--(-4,-5)--(-4,-6)--(1,-6) (0,-6)--(0,-7)--(1,-7)--(1,-10);
\draw (-1,-6)--(-1,-8)--(0,-8)--(0,-10) (-2,-6)--(-2,-9)--(-1,-9)--(-1,-10) (-6,-7)--(-2,-7);
\draw (-4,-7)--(-4,-8)--(-3,-8)--(-3,-10) (-6,-7)--(-6,-8)--(-5,-8)--(-5,-9)--(-4,-9)--(-4,-10);
\draw (-7,-8)--(-7,-9)--(-6,-9)--(-6,-10);
%nodes
\node at (2.5,-4.5) {14};
\node at (0.5,-3.5) {13};
\node   at (0.5,-4.5) {12};
\node  at (2.5,-8.5) {11};
\node  at (-1.5,-4.5) {10};
\node at (0.5,-5.5) {9};
\node   at (6.5,-9.5) {8};
\node  at (1.5,-9.5) {7};
\node  at (0.5,-9.5) {6};
\node   at (-0.5,-9.5) {5};
\node  at (-2.5,-6.5) {4};
\node  at (-1.5,-9.5) {3};
\node   at (-3.5,-9.5) {2};
\node  at (-4.5,-9.5) {1};
\end{tikzpicture}
\caption{A $5$-ribbon tableau of shape $\lambda=(16,11,10,9,8,7,4,3,1)$ with the corresponding filling of the $5$-quotient}
\end{figure}
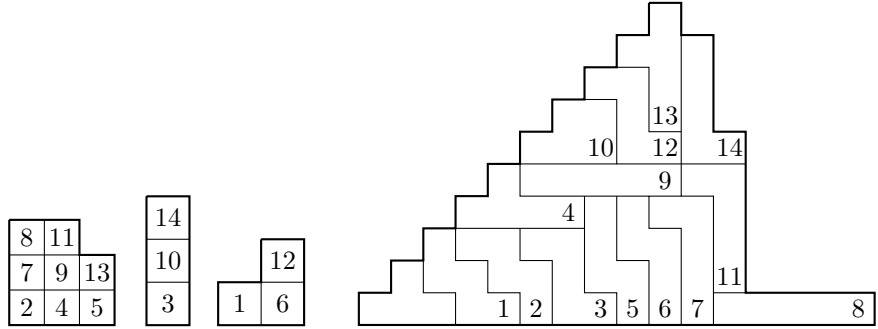

\begin{proof} Consider a $k$-ribbon tableau $T$ of shape $\lambda$ with abacus representation $(\alpha^{(1)},\alpha^{(2)},\ldots,\alpha^{(k)})$. As each ribbon corresponds to a move in the abacus representation of $\lambda$, $T$ uniquely corresponds to a sequence of moves from $(\alpha^{(1)},\alpha^{(2)}..\alpha^{(k)})$ to the $k$-core of $\lambda$. As we can move independently on each runner pair $(a_i,a_{k-i})$ and on $a_k$, it will suffice to match the moves on $a_k$ moves to shifted tableaux of shape $\alpha^{(k)}$, and the moves on runner pairs $(a_i,a_{k-i})$ to moves on $\alpha^{(i)}\diamond \alpha^{(k-i)}$for each $i$. 
\begin{claimm} There is a bijection between sequences of moves from $a_k$ to the empty runner and standard shifted tableaux of shape $\alpha^{(k)}$, where a move from row $d$ to $d-1$ on the abacus corresponds to a box on diagonal $d$.
\end{claimm}

\begin{figure}[ht]
    \centering
\begin{tabular}{ l }
  5\\
  \hline
 $\bullet$\\
$\bullet$\\
 $\circ$\\
\end{tabular}$\qquad \rightarrow 3 \qquad $
\begin{tabular}{ l }
  5\\
  \hline
 $\circ$\\
$\bullet$\\
 $\circ$\\
\end{tabular}$\qquad  \rightarrow 2 \qquad $
\begin{tabular}{ l }
  5\\
  \hline
 $\bullet$\\
$\circ$\\
 $\circ$\\
\end{tabular}$\qquad \rightarrow 1 \qquad $
\begin{tabular}{ l }
  5\\
  \hline
 $\circ$\\
$\circ$\\
 $\circ$\\
\end{tabular}$\qquad \qquad \qquad $
\raisebox{-0.6cm}
{\begin{tikzpicture}[scale=1/2]
%horizontal lines
\draw (0,0) -- (1,0);
\draw (-1,-1) -- (1,-1);
\draw (-1,-2) -- (1,-2);
%vertical lines
\draw (0,0) -- (0,-2)   (1,0) -- (1,-2) ;
\draw (-1,-1) -- (-1,-2)  ;
%diagonal lines
\draw[dashed] (1,0) -- (1.5,0.5) node {$\quad$1};
\draw[dashed] (1,-1) -- (1.5,-0.5) node {$\quad$2};
%labels
\node at (0.5,-0.5) {3};
\node at (0.5,-1.5) {2};
\node at (-0.5,-1.5) {1};
\end{tikzpicture}}
\caption{Moves on runner $a_5$ can be matched to a standard filling of the shifted diagram $\alpha^{(5)}=(2,1)$}
\label{fig:runib}
\end{figure}
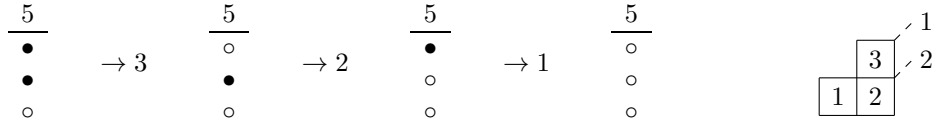

\begin{claimmproof} A bead on the $i$th row of runner $a_k$ will make a total of $i$ moves, $i-1$ to one row higher, and one last move to be removed. We map these moves to a row of $i$ boxes so that the move from position $j$ to $j-1$ will correspond to a box on diagonal $j$, and the removal move will correspond a box on the main diagonal. This means if $a_k$ has beads on positions $i_1> i_2>\cdots >i_t$ we will map the moves to the shifted diagram $\alpha^{(k)}=(i_1,i_2,\ldots,i_t)$. Let us number the moves in decreasing order with numbers from $1$ to $|\alpha^{(k)}|$. This will give us a filling of $\alpha^{(k)}$, with the conditions that beads can only move to unoccupied positions (meaning rows need to increase left to right), and a bead can only move higher (meaning columns increase bottom to top). Note that these conditions exactly correspond to the tableaux conditions.
\end{claimmproof}

Now, we can turn our attention to runner pairs $a_i$, $a_{k-i}$.
\begin{claimm} There is a bijection between sequences of moves from the pair of runners $(a_i,a_{k-i})$ to the abacus core and standard unshifted tableau of shape $\alpha^{(i)}\diamond \alpha^{(k-i)}$, where moves removing beads are mapped to the specialized diagonal of $\alpha^{(i)}\diamond \alpha^{(k-i)}$, and moves on runner $a_i$ (resp. $a_{k-i}$) from row $r$ to $r-1$ are mapped to the diagonal $d$ units to the left (resp. right).
\end{claimm}
\begin{figure}[ht]
\begin{center}
\begin{tabular}{ l l }
  $a_1$ & $a_4$  \\
  \hline
$\bullet$ & $\bullet$\\
$\circ$ & $\bullet$\\
$\bullet$ & $\circ$ \\
$\bullet$ & $\circ$ \\
\end{tabular}$\quad \rightarrow 8 \quad $ 
\begin{tabular}{ l l }
$a_1$ & $a_4$ \\
  \hline
$\bullet$ & $\bullet$\\
$\bullet$ & $\bullet$\\
$\circ$ & $\circ$ \\
$\bullet$ & $\circ$ \\
\end{tabular}$\quad  \rightarrow 7 \quad $
\begin{tabular}{ l l }
$a_1$ & $a_4$\\
  \hline
$\circ$ & $\circ$\\
$\bullet$ & $\bullet$\\
$\circ$ & $\circ$ \\
$\bullet$ & $\circ$ \\
\end{tabular}$\quad \rightarrow 6 \quad $
\begin{tabular}{ l l }
  $a_1$ & $a_4$  \\
  \hline
$\bullet$ & $\circ$\\
$\circ$ & $\bullet$\\
$\circ$ & $\circ$ \\
$\bullet$ & $\circ$ \\
\end{tabular}$\quad \rightarrow 5 \quad $
$\quad \cdots  \quad $
\raisebox{-0.6cm}
{\begin{tikzpicture}[scale=1/2]
\draw (0,0) -- (2,0) (0,-2)--(3,-2) (0,-3)--(3,-3);
\draw (0,-1) -- (3,-1);
\draw (0,0) -- (0,-3) (1,0)--(1,-3) (2,0)--(2,-3) (3,-1)--(3,-3);
\draw[dashed] (3,-1) -- (3.5,-0.5) node[right, violet] {special diag.};
\draw[dashed, gray] (1,-3)--(3,-1);

%labels
\node at (1.5,-0.5) {8};
\node at (0.5,-0.5) {5};
\node at (1.5,-1.5) {6};
\node at (2.5,-1.5) {7};
\node at (2.5,-2.5) {4};
\node at (1.5,-2.5) {3};
\node at (0.5,-1.5) {2};
\node at (0.5,-2.5) {1};
\end{tikzpicture}}
\end{center}
\caption{Moves on runners $a_1$ and $a_4$ give a standard filling of the folded diagram $\alpha^{(1)} \diamond \alpha^{(1)}$}
\label{fig:runia}
\end{figure}
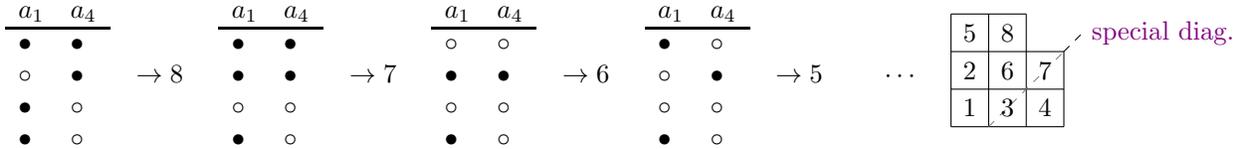
\begin{claimmproof} The move sequences on each runner can be matched to a shifted tableau of corresponding shape as in Claim 1, except now we have an additional constraint: To remove a bead from the first row one runner, we must simultaneously remove a bead from the first row of the second runner. This implies that the main diagonals of the two shapes must contain the same numbers, and they will be on the main diagonal. Furthermore, if one runner has $q$ more beads than the other one, these can not be removed, and the moves which depend on the removal of these beads can not be made, which means the the subdiagram of shape $\delta_q$ inside the larger shape will be left empty.
\end{claimmproof}
When $k$ is even, we can move ribbons up on runner $a_{k/2}$ but not remove them, as if it has a conjugate runner with no beads, so any remark we made on $\alpha^{(i)}\diamond \alpha^{(k-i)}$ above automatically applies to $\alpha^{(k/2)}\diamond \varnothing$.
\end{proof}

\begin{rmk}The correspondence of diagonals gives us a way of labeling the diagonals of the quotient to match the values of the original shape. This way, the shifted shape $\alpha^{(k)}$ will have diagonals $0,k,2k\ldots$ and the folded shapes $\alpha^{(i)} \diamond \alpha^{(k-i)}$ will have diagonals:$\{\ldots i+2k,i+k,k-i,2k-i, 3k-i \ldots \}$ where the specialized diagonal $\mathfrak{d}_i$ will have the diagonal value $k-i$. \label{rmk:diag}
\end{rmk}
Note that the diagonal values $i < (k-1)/2$ do not appear. The reason of this is our convention of setting the head of double ribbon to be the head of the larger piece.

\begin{cor} A $k$-ribbon $R$ has a box on the main diagonal of $\lambda$ if and only if $diag(R)\leq k$.
\end{cor}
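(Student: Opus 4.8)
The plan is to treat single and double $k$-ribbons separately, in each case reducing the statement to the identity $|R|=\text{diag}(H(R))-\text{diag}(T(R))+1$ recorded for single ribbons, together with the structural description of double ribbons from Proposition~\ref{prop:remove}. Here $R$ denotes a $k$-ribbon occurring in a $k$-ribbon tableau of $\lambda$, hence removable from one of the intermediate shapes $\lambda^{(i)}$; since every $\lambda^{(i)}$ is a shifted diagram, ``the main diagonal of $\lambda$'' is for all of them the set of cells of diagonal value $1$. Recall also that $\text{diag}(R)$ means $\text{diag}(H(R))$, and that $R$ meets the main diagonal precisely when its lowest cell $T(R)$ has $\text{diag}(T(R))=1$, since $T(R)$ realizes the minimum of the diagonal values on $R$ and all diagonal values are at least $1$.

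First I would handle the single-ribbon case. If $R$ is a single $k$-ribbon then $k=|R|=\text{diag}(H(R))-\text{diag}(T(R))+1$, so $\text{diag}(R)=\text{diag}(H(R))=\text{diag}(T(R))+k-1\ge k$, with equality if and only if $\text{diag}(T(R))=1$. Hence for single ribbons ``$R$ meets the main diagonal'' is equivalent to $\text{diag}(R)=k$, which, since $\text{diag}(R)\ge k$ always holds, is equivalent to $\text{diag}(R)\le k$.

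Next, the double-ribbon case. Write $R=(A,B)$ with $|A|\ge|B|$ and $|A|+|B|=k$. By definition the tail of $A$ lies on the main diagonal, so $R\supseteq A$ always meets the main diagonal, and the left-hand side of the asserted equivalence is automatically true for double ribbons. For the right-hand side, the head of $R$ equals the head of $A$ by definition, and since $T(A)$ has diagonal value $1$ while $A$ occupies $|A|$ consecutive diagonals, $\text{diag}(R)=\text{diag}(H(A))=|A|$; as $B$ is a nonempty single ribbon, $|B|\ge 1$, whence $\text{diag}(R)=|A|=k-|B|\le k-1<k$. So both sides of the equivalence hold, finishing the case. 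Equivalently, both computations can be read directly off Proposition~\ref{prop:remove}: a removable single $k$-ribbon with $\text{diag}(H)=m+k$ and $m\ge 0$ meets the main diagonal iff $m=0$, i.e. iff $\text{diag}(H)=k$; and a removable double $k$-ribbon with $\text{diag}(H)=a$ always has $k/2<a\le k-1$ and always meets the main diagonal.

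I do not expect a genuine obstacle: the corollary is essentially a bookkeeping consequence of $|R|=k$ and of the convention that the head of a double ribbon is the head of its larger component. The only point needing a moment's care is precisely that convention, because it is what forces $\text{diag}(R)=|A|$ rather than $|B|$ for a double ribbon, and hence what makes ``$\le k$'' the correct threshold separating double ribbons and main-diagonal single ribbons from the remaining, off-diagonal, single ribbons.
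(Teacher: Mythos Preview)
Your argument is correct. The paper gives no explicit proof of this corollary; its placement immediately after the bijection theorem and the remark on diagonal labeling indicates the intended justification is through the quotient: ribbons meeting the main diagonal correspond to bead-removal moves on the abacus, and under $\Phi^k_\lambda$ these are exactly the moves sent to the main diagonal of $\alpha^{(k)}$ (diagonal value $k$) or to a specialized diagonal $\mathfrak{d}_i$ (diagonal value $k-i$ with $i<k/2$), while every other diagonal value occurring in the quotient exceeds $k$.

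You take a genuinely different, more elementary route: you bypass the bijection entirely and argue directly from the size identity $|R|=\text{diag}(H(R))-\text{diag}(T(R))+1$ for single ribbons together with the definition of a double ribbon (both component tails sit on diagonal $1$, and the head is that of the larger piece). This is cleaner and self-contained; it also makes transparent that for single ribbons one always has $\text{diag}(R)\ge k$ with equality exactly on the main diagonal, while for double ribbons one always has $\text{diag}(R)\le k-1$. The paper's route, by contrast, explains why the statement is a \emph{corollary} here: it is really the observation that the diagonal values $\le k$ in the quotient are precisely those of the distinguished diagonals.
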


\begin{defn}Consider a semi standard $k$-ribbon tableaux $T$ of shape $\lambda$, with $|\lambda|=n$. The \emph{standardization} of $T$, denoted $St(T)$ is the standard $k$-ribbon tableaux of shape $\lambda$ that we obtain by the following numbering:
\begin{itemize}
    \item We number the cells in the order $1'<1<2'<2<\cdots $
   \item If there is more than one cell of label $i$, we order them so that their diagonal values will be increasing.
   \item If there is more than one cell of label $i'$, we order them so that their diagonal values will be decreasing.
\end{itemize}
\label{def:standardization}
\end{defn}

\begin{prop}\label{prop:standardization}$St(T)$ is well defined. 
\end{prop}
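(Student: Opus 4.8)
The plan is to show that the three bulleted rules in Definition \ref{def:standardization} actually do determine a unique standard $k$-ribbon tableau, i.e. that the only ambiguity — the order in which two ribbons carrying the same label (or the same primed label) are inserted — is in fact forced, and that the resulting sequence of shifted diagrams $\lambda^{(0)}\subset\cdots\subset\lambda^{(n)}=\lambda$ genuinely consists of single additions of $k$-ribbons. The key point is that ribbons with the same label in a semi-standard $k$-ribbon tableau come from a single horizontal strip (for an unprimed label $i$) or a single vertical strip (for a primed label $i'$), and within such a strip the ribbons have distinct diagonal values of their heads, so ordering them by increasing (resp. decreasing) $\mathrm{diag}(H)$ is unambiguous.

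First I would recall the structure coming from the definition of a semi-standard shifted $k$-ribbon tableau: we have $\lambda^{(i-1)}\subset\lambda^{(i')}\subset\lambda^{(i)}$ where $\lambda^{(i)}\setminus\lambda^{(i')}$ is a horizontal $k$-strip (label $i$) and $\lambda^{(i')}\setminus\lambda^{(i-1)}$ is a vertical $k$-strip (label $i'$). By the definition of a horizontal (resp. vertical) $k$-ribbon strip, there is already a witnessing order $R_1,\dots,R_t$ of its constituent ribbons with $H(R_j)$ strictly to the right of (resp. strictly above) $H(R_{j-1})$; in particular $\mathrm{diag}(H(R_1))<\cdots<\mathrm{diag}(H(R_t))$ for a horizontal strip and $\mathrm{diag}(H(R_1))>\cdots>\mathrm{diag}(H(R_t))$ for a vertical strip, since moving strictly right increases the diagonal value and moving strictly up decreases it. So the rule "order same-label cells by increasing diagonal, same-primed-label by decreasing diagonal" picks out exactly this witnessing order, and I must check two things: (a) this order is the only one for which each successive difference is a $k$-ribbon and each intermediate $\lambda^{(i)}\setminus(\text{ribbons already placed})$ is a shifted diagram, and (b) the concatenation over all labels $1',1,2',2,\dots$ yields a valid standard $k$-ribbon tableau.

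For (a) I would argue that, given a horizontal $k$-strip $S=R_1\cup\cdots\cup R_t$ whose union is a skew-shifted shape, a ribbon $R_j$ is removable from $\lambda^{(i')}\cup R_1\cup\cdots\cup R_j$ precisely when its head is at an "outer corner", and the removability constraints force the heads to be peeled off in order of decreasing diagonal value (equivalently, added in increasing order); here I would invoke Proposition \ref{prop:head} and Proposition \ref{prop:remove}, which pin down removable ribbons by the head's diagonal value and the parts of the shape, together with the abacus picture of Theorem \ref{cor:ribboncorrespondence} — on the abacus, the moves making up a horizontal strip act on distinct runners/positions and can only be performed in one relative order consistent with "beads move to unoccupied spots." The vertical case is symmetric with the roles of rows/columns (and left/right vs. up/down) exchanged. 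For (b), once each strip is internally ordered, stacking the orders for $1'$ then $1$ then $2'$ then $2$, etc., automatically gives $\lambda^{(0)}\subset\cdots\subset\lambda^{(n)}$ with each step a single $k$-ribbon and $\lambda^{(0)}$ the $k$-core, which is the definition of a standard $k$-ribbon tableau; the labels $1,\dots,n$ each occur once by construction.

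The main obstacle I anticipate is part (a): carefully verifying that within one horizontal (or vertical) strip the removal order is genuinely forced, i.e. that you cannot remove the ribbons in any other order and still have each intermediate shape be a legal shifted diagram with a legal removable $k$-ribbon at each step. This is where the interaction between single-ribbons and double-ribbons needs attention — a double-ribbon has its tail on the main diagonal, so two ribbons in the same strip could in principle "compete" near the main diagonal — and I would handle it by translating to the abacus via Theorem \ref{cor:ribboncorrespondence}, where the claim becomes the transparent statement that a prescribed multiset of Type I / Type II moves on disjoint runner-pairs, together with the constraint from the strip condition on heads, admits a unique valid ordering. Everything else (distinctness of diagonal values of heads in a strip, the primed vs. unprimed bookkeeping, and the concatenation) is routine once this is in place.
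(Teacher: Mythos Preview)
Your overall strategy matches the paper's: use the horizontal/vertical strip structure to show that ordering the same-label ribbons by head-diagonal gives a legitimate refinement into single-ribbon steps. The paper's entire proof is two sentences to this effect; everything beyond that in your proposal is additional.

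There is one genuine problem. Your item (a) --- that the diagonal-increasing order is the \emph{only} order in which the successive differences are $k$-ribbons and the intermediate shapes are shifted diagrams --- is both unnecessary and false. For well-definedness you only need that the diagonal-increasing order is \emph{a} valid order, not the unique one. And uniqueness fails: take $k=3$ and the horizontal $3$-strip carrying $(7,2)$ to $(10,5)$; the two constituent ribbons (heads at diagonals $5$ and $10$) sit on different runners in the abacus and may be removed in either order, both giving valid chains of shifted shapes. Your abacus heuristic ``the moves \ldots\ can only be performed in one relative order consistent with `beads move to unoccupied spots'\,'' is exactly what breaks here --- independent bead moves commute. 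So any attempt to prove (a) will fail, but fortunately you can simply delete (a): the witnessing order from the definition of a horizontal (resp.\ vertical) strip already is the diagonal-increasing (resp.\ decreasing) order, and that suffices.

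A smaller gap: ``moving strictly right increases the diagonal value'' is not true on its face, since $\mathrm{diag}=\mathrm{col}-\mathrm{row}+1$ and ``strictly to the right'' only controls the column. The implication \emph{does} hold for heads in a horizontal strip, but it uses the strict-partition structure: if $m_1,m_2$ are two parts of a strict partition with $m_1>m_2$, the number of parts in $[m_2,m_1)$ is at most $m_1-m_2$, which forces the rightmost cell of the row of length $m_2$ to lie weakly left of that of the row of length $m_1$. That is the missing step in your second paragraph. Once this is supplied, the diagonal values in the witnessing order are strictly increasing, the rule in Definition~\ref{def:standardization} is unambiguous, and your item (b) finishes the proof.
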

\begin{proof} As ribbons labeled $i$ form a horizontal strip, they can be removed in the order diagonals are increasing. Similarly, tibbons labeled $i'$ form a vertical strip and can be removed in the order their diagonals are decreasing. \end{proof}
 
Using the labeling of the diagonals from Remark \ref{rmk:diag}, we can also do the same standardization operation on the semi-standard fillings of the $k$-quotient.

\begin{prop}
 We can extend $\Phi^k_{\lambda}$ to a weight preserving bijection between semi-standard $k$-ribbon tableaux of shape $\lambda$, and semi-standard fillings of its $k$-quotient.
\end{prop}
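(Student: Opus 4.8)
The plan is to bootstrap from the standard case using the standardization operation of Definition~\ref{def:standardization}. Given a semi-standard $k$-ribbon tableau $T$ of shape $\lambda$ with $|\lambda|=n$, first form its standardization $St(T)\in SRT_k(\lambda)$, which is well defined by Proposition~\ref{prop:standardization}; apply the bijection $\Phi^k_\lambda$ of the previous theorem to obtain a standard filling $U=\Phi^k_\lambda(St(T))$ of the $k$-quotient; and then \emph{de-standardize}. The integers $1,\dots,n$ split into consecutive blocks $B_{1'},B_1,B_{2'},B_2,\dots$ according to which label a cell of $St(T)$ inherited from $T$, and one replaces every entry of $U$ lying in $B_i$ by $i$ and every entry lying in $B_{i'}$ by $i'$; let $\widetilde{\Phi^k_\lambda}(T)$ be the resulting filling of the $k$-quotient. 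Weight preservation is immediate, since this last step merely restores each cell's original label and hence the monomial $X^{|\cdot|}$.

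Two statements then have to be proved: that $\widetilde{\Phi^k_\lambda}(T)$ is an honest semi-standard filling of the $k$-quotient, and, symmetrically, that applying $(\Phi^k_\lambda)^{-1}$ to the standardization of a semi-standard filling of the quotient and de-standardizing yields a semi-standard $k$-ribbon tableau. Granting both, invertibility is formal: $\Phi^k_\lambda$ preserves diagonal values, and both standardization procedures --- on shifted $k$-ribbon tableaux and, via the diagonal labelling of Remark~\ref{rmk:diag}, on fillings of the $k$-quotient --- order the cells carrying a common label $i$ by increasing diagonal value and those carrying a common label $i'$ by decreasing diagonal value. Consequently $St\bigl(\widetilde{\Phi^k_\lambda}(T)\bigr)=U=\Phi^k_\lambda(St(T))$, so $\widetilde{\Phi^k_\lambda}$ intertwines the two standardizations; since a semi-standard object of fixed content is determined by its standardization, the inverse is forced to be $S\mapsto$ the de-standardization of $(\Phi^k_\lambda)^{-1}(St(S))$ with content matching $S$, and $\widetilde{\Phi^k_\lambda}$ and this map are mutually inverse.

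The content of the proposition therefore reduces to showing that $\Phi^k_\lambda$ carries horizontal $k$-ribbon strips labeled $i$ to sets of cells that may consistently be assigned the label $i$ in a semi-standard filling of the $k$-quotient, that it carries vertical $k$-ribbon strips labeled $i'$ to sets of cells admitting the label $i'$, and conversely for $(\Phi^k_\lambda)^{-1}$; running this strip by strip over $1',1,2',2,\dots$ then gives both semi-standardness statements. To prove it I would use Proposition~\ref{prop:remove} together with the abacus correspondence (Theorem~\ref{cor:ribboncorrespondence}): such a strip decomposes across the runner pairs $(a_j,a_{k-j})$ and the runner $a_k$ into independent sequences of abacus moves, which under the constructions in the proof of the previous theorem become additions of cells to the folded shapes $\alpha^{(j)}\diamond\alpha^{(k-j)}$ and to the shifted shape $\alpha^{(k)}$. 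One then checks that the defining condition of a horizontal ribbon strip --- consecutive heads moving strictly to the right, i.e.\ strictly increasing in diagonal value --- translates exactly into the condition that the newly added unmarked cells occupy distinct columns of $\alpha^{(k)}$ and, on each folded shape, distinct columns weakly below the specialized diagonal and distinct rows weakly above it, that is, form a horizontal strip in the ``inverted above $\mathfrak{d}$'' sense; the mirror statement handles vertical strips and the label $i'$. Finally, the fact that a ribbon meets the main diagonal of $\lambda$ exactly when its diagonal is $\le k$ (the corollary preceding Definition~\ref{def:standardization}) matches the ``no marked entry on a main-diagonal ribbon'' condition with the ``no marked entry on a specialized diagonal'' condition, so the same bijection restricts to one between $SSShT^{*(k)}(\lambda)$ and the quotient fillings with no marked entries on specialized diagonals, compatibly with $RQ^{(k)}_\lambda=2^{\ell^{(k)}(\lambda)}RP^{(k)}_\lambda$.

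The main obstacle is the strip-to-strip claim, and specifically the bookkeeping that matches the head-movement conditions defining horizontal and vertical $k$-ribbon strips with the row/column and marked/unmarked constraints of a semi-standard folded tableau: because the rules of a folded tableau are inverted weakly above the specialized diagonal, the two sides of $\mathfrak{d}$ behave oppositely, and one must verify that the abacus-move picture of a ribbon strip on each runner pair is consistent with this inversion. Once that dictionary is in place, everything else is routine manipulation of the block structure coming from standardization.
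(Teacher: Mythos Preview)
Your proposal is correct and follows essentially the same route as the paper: standardize, apply $\Phi^k_\lambda$, then de-standardize, with the two semi-standardness verifications (forward and inverse) as the only real work; the paper likewise reduces everything to checking that the cells acquiring a common label $j$ (resp.\ $j'$) form a valid horizontal (resp.\ vertical) strip on each piece of the quotient, and conversely. The one difference is in how that strip-to-strip claim is argued: where you propose a direct abacus-move dictionary translating the head-movement condition into column/row constraints on the quotient, the paper instead argues by contradiction, assuming two $j$'s land in the same column of $\alpha^{(k)}$ (or violate the folded rules on $\alpha^{(i)}\diamond\alpha^{(k-i)}$) and using the geometry of ribbon heads (Proposition~\ref{prop:head}) to produce a third ribbon whose head would have to lie strictly left of one of its own cells. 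One small caution: your parenthetical ``i.e.\ strictly increasing in diagonal value'' is not literally what ``heads strictly to the right'' means, and the paper's argument in fact needs the column comparison, not just the diagonal one; your abacus translation should track columns of heads rather than diagonals at that step.
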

\begin{proof} Let $T$ be a semi-standard $k$-ribbon tableaux of shape $\lambda$, given by the sequence $\lambda_0 \subset \lambda_{1'}\subset \lambda_{1} \subset \lambda_{2'} \subset \lambda_2 \subset \cdots  \subset \lambda_t=\lambda$ of shifted diagrams. As our definition of standardization respects the inclusion order, $St(T)$ restricted to any $\lambda_{i}$ gives a standardization of $\lambda_i$. The same is true for $\lambda_{i'}$s. 
Let us apply the $\Phi^k_{\lambda}$ to the standardization of $T$. This gives us a bijection $\phi$ between ribbons of $T$ and the boxes on the $k$-quotient. This also can be restricted to the subdiagrams $\lambda_i$ and $\lambda_i'$, giving a sequence  $\Phi^k(St(\lambda_0)) \subset \Phi^k(St(\lambda_{1'}))\subset \Phi^k(St(\lambda_{1})) \subset \cdots  \subset \Phi^k(\lambda_t)=\Phi^k(\lambda)$. Here, the subset relation is defined pointwise in the $(k+1)$-tuples of quotient diagrams. 
\begin{claimm} The filling of the $k$-quotient obtained by this is a semi-standard filling, and is equal to $\Phi^k_{\lambda}(T)$ if the filling $T$ is standard. 
\end{claimm}
\begin{claimmproof} The second part of the claim follows from the definition of $\Phi^k_{\lambda}(T)$. For the first part, we need to show that the filling of each $a^{(i)} \diamond a^{(k-i)}$ gives a semi-standard folded shape and the filling of $a^{(k)}$ gives a semi-standard shifted shape. Let us look first at the case of $a^{(k)}$.
 To obtain a contradiction, let us assume there are two boxes $B_1$ and $B_2$ on $a^{(k)}$ that are marked $j$ and are on the same column (so that they do not form a horizontal $1$-strip). Without loss of generality, we can take $B_2$ to be the higher one. This means if we name the corresponding ribbons on $\lambda$ respectively $R_1$ and $R_2$, we have  $diag(R_2) < diag(R_1)$. Also, as they both are labeled $j$, they are on a horizontal $k$-strip, specifically $H(R_2)$ lies strictly to the right of $H(R_2)$. These together imply that $H(R_2)$ must also be strictly above $H(R_1)$. Remember that the cells labeled $j$ form a skew shape, so the cell $C$ that is on the same row as $H(R_1)$ and the same column as $H(R_2)$ must also be in $\lambda_i$ with its diagonal value higher than those of $H(R_1)$ and $H(R_2)$. This implies it is not on $R_1$ or $R_2$. It must be on a different ribbon $R_3$ on the horizontal $k$-strip. As $R_2$ and $R_3$ have boxes in the same column with the box of $R_2$ above, we can not remove $R_3$ before $R_2$. This implies $H(R_3)$ must be strictly to the right of H($R_2$) and consequently to the right of $C$. This can not happen as $C$ is on $R_3$, by \ref{prop:head}. Symmetrically, no two cells marked $j'$ can be on the same row, so we indeed have a semi-standard filling of $a^{(k)}$. Now consider the boxes marked $j$ on $a^{(i)} \diamond a_{(k-i)}$ in some $i$. As they come from the difference $\Phi^k(St(\lambda_{j})) \backslash \Phi^k(St(\lambda_{j'}))$, they form a skew shape. Also, the boxes that are labeled $j$ to the right of the main diagonal form a horizontal strip by the same reasoning in the case of $\lambda$. The boxes labeled $j$ to the left of the main diagonal form a vertical strip, as we have the inverted version of the same rules. The $j'$ case is again symmetrical.
\end{claimmproof}

Now let us define the inverse of this operation. Given a semi-standard filling $\bar{T}$ of the $k$-quotient, as the $k$-quotient has the diagonal values induced by $\lambda$, we can apply the same standardization algorithm to the quotient, to get a standard filling $St(\bar{T})$ of the quotient. Applying $\Phi^{k-1}_{\lambda}$ to this filing gives a standard filling of $\lambda$. We can use this bijection between cells of the quotient and ribbons of $\lambda$ to carry the labels in $\bar(T)$ to the corresponding ribbons in $\lambda$. Note that, this inverts the above operation by definition.
\begin{claimm} The inverse operation takes $\bar{T}$ to a semi-standard filling of $\lambda$.
\end{claimm}
\begin{claimmproof} Let $R$ and $S$ be two ribbons marked $j$ on $\lambda$. We will show that they form an horizontal strip. The case of $j'$ is symmetrical. First note that we can not have $diag(R)=diag(S)$, as that would imply the corresponding cells in the quotient are both in the same $a^{(i)} \diamond a^{(k-i)}$ (or both in $a^{(k)}$) on the same diagonal, which is not possible. Let us assume, without loss of generality, that  $diag(R) > diag(S)$.  Then, in the standardization, the label of $R$ will be higher than the label of $S$, meaning $R$ can be removed before $S$: $H(R)$ can not be below $H(S)$ in the same column. In this case $H(R)$ is strictly to the right of $H(S)$ implying they form a horizontal strip, as otherwise $H(R)$ would be strictly below $H(S)$ in a row strictly to the left, giving us no possible way to label the ribbon containing the cell $C$ in the same row as $H(R)$ and the same column as $H(S)$.
\end{claimmproof}
\end{proof}
This bijective relationship shows that the $k$-ribbon Q-function is equal to the product of the Q-functions of its quotient:
\begin{thm} The $k$-ribbon Q-function of a shifted shape $\lambda$ with $k$-abacus representation $(\alpha^{(1)},\alpha^{(2)},\ldots ,\alpha^{(k)})$ has the following expansion in terms of Schur's $Q$-functions:
\begin{eqnarray} RQ^{(k)}_{\lambda}(X)=Q_{a^{(k)}}(X)\prod_{i\leq\lfloor k/2\rfloor}Q_{\mu_i}(X) \label{eq:5} \end{eqnarray}
where $\mu_i$ is the underlying skew-unshifted shape of $a^{(i)} \diamond a^{(k-i)}$ if $i<k/2$ and $a^{(k/2)}\diamond \varnothing$ if $i=k/2$. 
\end{thm}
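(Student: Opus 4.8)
The plan is to read off the claimed factorization directly from the weight-preserving bijection $\Phi^k_\lambda$ of the preceding proposition. That proposition gives a bijection between semi-standard $k$-ribbon tableaux of shape $\lambda$ and semi-standard fillings of the $k$-quotient $\overline{\Phi^k}(\lambda)$ which preserves the total content. Since $RQ^{(k)}_\lambda(X)=\sum_{T\in SSShT^{(k)}(\lambda)}X^{|T|}$, this already rewrites $RQ^{(k)}_\lambda(X)$ as the generating function $\sum_{\bar T}X^{|\bar T|}$ taken over all semi-standard fillings $\bar T$ of the $k$-quotient.

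The key step is then to observe that this generating function decouples over the components of the quotient. By definition, a semi-standard filling of $\overline{\Phi^k}(\lambda)$ is a \emph{simultaneous but otherwise unconstrained} choice of a semi-standard folded tableau on each folded shape $a^{(i)}\diamond a^{(k-i)}$ for $1\le i\le\lfloor k/2\rfloor$ (with $a^{(k/2)}\diamond\varnothing$ in place of $a^{(k/2)}\diamond a^{(k/2)}$ when $k$ is even) together with a semi-standard shifted tableau on $a^{(k)}$, all drawn from the same marked alphabet. Because no rule links cells of distinct components and $X^{|\bar T|}$ is the product of the monomials contributed by the individual components, summing over all such tuples factors as
$$RQ^{(k)}_\lambda(X)=\left(\sum_{S\in SSShT(a^{(k)})}X^{|S|}\right)\prod_{i\le\lfloor k/2\rfloor}\left(\sum_{S\in SSFT(a^{(i)}\diamond a^{(k-i)})}X^{|S|}\right).$$
By the definition of Schur's $Q$-function the first factor is $Q_{a^{(k)}}(X)$, and by the definition of the folded $Q$-function each remaining factor is $Q^f_{a^{(i)}\diamond a^{(k-i)}}(X)$. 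Finally, applying the theorem that $Q^f_{(\gamma,\mathfrak{d})}(X)$ is independent of the specialized diagonal $\mathfrak{d}$ — equivalently, its corollary identifying $Q^f$ with a skew-shifted $Q$-function — each folded factor equals $Q_{\mu_i}(X)$, where $\mu_i$ is the underlying skew-unshifted shape, which is exactly \eqref{eq:5}.

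Essentially all of this is bookkeeping, and the only point that deserves care — the step I would regard as the main obstacle — is the decoupling assertion: one must check that the notion of a semi-standard filling of the $k$-quotient imposes no cross-component constraints, so that the generating function is a genuine product, and that the bijection of the preceding proposition is indeed weight-preserving. Both facts are already in place. Granted these, the even case $i=k/2$ requires no separate argument, since $a^{(k/2)}\diamond\varnothing$ is handled on the same footing as the other folded shapes, and the stated expansion follows; combined with Stembridge's theorem it also yields the Schur $Q$-positivity of $RQ^{(k)}_\lambda$.
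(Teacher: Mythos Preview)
Your proposal is correct and follows essentially the same approach as the paper: the theorem is stated there as an immediate consequence of the weight-preserving bijection between semi-standard $k$-ribbon tableaux and semi-standard fillings of the $k$-quotient, together with the identification of each folded $Q^f$-factor with a skew-shifted $Q$-function. You have simply made explicit the decoupling step and the invocation of the diagonal-independence theorem that the paper leaves implicit.
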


\begin{cor} The Q-ribbon functions expand positively into Schur's Q-functions.
\end{cor}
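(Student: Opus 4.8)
The plan is to read the statement off directly from the product expansion \eqref{eq:5} of the preceding theorem, using only Stembridge's theorem (Theorem~\ref{thm:Stembridge}) as external input. Recall that \eqref{eq:5} writes
\[
RQ^{(k)}_{\lambda}(X)=Q_{\alpha^{(k)}}(X)\prod_{i\le\lfloor k/2\rfloor}Q_{\mu_i}(X),
\]
where $Q_{\alpha^{(k)}}$ is an honest Schur $Q$-function and each $\mu_i$ is the skew-shifted shape underlying the combination $\alpha^{(i)}\diamond\alpha^{(k-i)}$ (and $\alpha^{(k/2)}\diamond\varnothing$ when $i=k/2$). So it suffices to show that the right-hand side is Schur $Q$-positive.

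First I would record that each factor is already Schur $Q$-positive. The factor $Q_{\alpha^{(k)}}$ is a single Schur $Q$-function, so there is nothing to prove. Each remaining factor $Q_{\mu_i}$ is a skew Schur $Q$-function, and by the folded-tableaux corollary proved earlier (equivalently, by the first identity of Theorem~\ref{thm:Stembridge}) it expands as $\sum_{\nu} f^{\,\cdot}_{\,\cdot,\cdot}\,Q_{\nu}(X)$ with nonnegative integer coefficients. Thus the product on the right of \eqref{eq:5} is a product of Schur $Q$-positive symmetric functions.

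Next I would use the second identity of Theorem~\ref{thm:Stembridge}, namely $Q_{\mu}(X)Q_{\nu}(X)=\sum_{\lambda}f^{\lambda}_{\mu,\nu}Q_{\lambda}(X)$ with $f^{\lambda}_{\mu,\nu}\in\mathbb{N}$, together with the obvious closure under addition, to conclude that the $\mathbb{N}$-span of the $Q_{\lambda}$ is a semiring; an induction on the number of factors then shows that the product in \eqref{eq:5}, and hence $RQ^{(k)}_{\lambda}(X)$, lies in this span. This gives the claimed positivity.

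There is essentially no obstacle: all the real work has been absorbed into the preceding theorem, which supplies the factorization, and into Stembridge's theorem, which supplies both positivity of skew Schur $Q$-expansions and closure of Schur $Q$-positivity under products. The only point that deserves a word of care is the identification of each factor $Q_{\mu_i}$ with a genuine (skew) Schur $Q$-function so that Theorem~\ref{thm:Stembridge} applies to it, and this is exactly what the folded-tableaux corollary established.
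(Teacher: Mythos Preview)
Your proposal is correct and follows essentially the same approach as the paper: the paper's proof is the one-line ``Follows from the last theorem and the Schur $Q$-positivity of the skew-shifted Schur $Q$-functions (Theorem~\ref{thm:Stembridge}),'' and your argument is simply a careful unpacking of that sentence, invoking both halves of Stembridge's theorem to handle the skew factors and then the product.
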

\begin{proof} Follows from the last theorem and the Schur Q-positivity of the skew-shifted Schur Q-functions \ref{thm:Stembridge}.
\end{proof}

Note that the Schur's Q-functions are themselves $k$-ribbon Q-functions for any $k$, as  $k\lambda=(k\lambda_1,k\lambda_2,\ldots,k\lambda_n)$ has $RQ^{(k)}_{k\lambda}(X)=Q_{\lambda}$.

\section{Peak Functions of Ribbon Tableaux}

The \emph{reading word} of a $k$-ribbon tableau is a reading of the labels on the heads of the ribbons, left to right, top to bottom.
\begin{defn}
A \emph{marked standard shifted $k$-ribbon tableau} $T'$ of shape $\lambda$ is defined to be a standard shifted $k$-ribbon tableau $T$ of shape $\lambda$ together with a subset $M$ of $[n]$ determining the marked coordinates. On the Young diagram, for all $i$ in M, we replace the label of $R_i$ with $i'$. We will denote the set of all the marked versions of $T$ by the set $Mark(T)$.
\end{defn}

\begin{figure}[ht]
\centering
\begin{tikzpicture}[scale=3/8]
\draw[thick] (0,0)--(1,0)--(1,-1)--(2,-1)--(2,-4)--(1,-4)--(1,-2)--(0,-2)--(0,0) ;

\draw[thick] (0,-2)--(0,-5)--(3,-5)--(3,-4)--(2,-4);

\draw[thick] (-2,-3)--(-3,-3)--(-3,-4)--(-4,-4)--(-4,-5)--(0,-5);

\draw[thick] (0,-1)--(-1,-1)--(-1,-2)--(-2,-2)--(-2,-3)--(-2,-4)--(0,-4);
\node at (1.5,-3.45) {4};
\node  at (2.5,-4.5) {$3'$};
\node  at (-0.5,-3.4) {$2'$};
\node at (-0.5,-4.5) {1};
\end{tikzpicture}
\caption{This marked $5$-ribbon tableau has reading word $2',4,1,3'$ }
\end{figure}
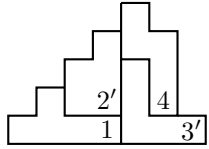

\begin{thm}\label{thm:peak} The $k$-ribbon Q-function of a shifted shape $\lambda$ we have can be written in terms of descent functions and peak functions as follows:
\begin{eqnarray*}RQ^{(k)}_{\lambda}(X)=\sum_{T'\in SShT\pm^{(k)}(\lambda)}F_{Des(T')}\qquad RQ^{(k)}_{\lambda}(X)=\sum_{T\in SShT^{(k)}(\lambda)}2^{|Peak(Des(T))|+1}G_{Peak(Des(T))}\end{eqnarray*}
where $SShT\pm^{(k)}(\lambda)$ is the set of marked standard shifted $k$-ribbon tableaux of shape $\lambda$.
\end{thm}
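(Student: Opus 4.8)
The plan is to prove the two displayed identities in sequence: the first by a standardization--destandardization argument in the spirit of Gessel's expansion of Schur functions \cite{MR777705} and its ribbon--tableau analogue $GF^{(k)}_{\mu/\mu_0}(X)=\sum_{S\in SRT_k(\mu)}F_{Des(S)}(X)$, and the second by feeding the first into Stembridge's single--tableau peak identity from \cite{MR1389788}. Throughout, $n=|\lambda|/k$ is the number of ribbons in any $k$--ribbon tableau of $\lambda$.

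For the first identity I would start from $RQ^{(k)}_\lambda(X)=\sum_{S\in SSShT^{(k)}(\lambda)}X^{|S|}$ and partition the semi--standard $k$--ribbon tableaux according to the datum consisting of the standardization $St(S)$ of Definition \ref{def:standardization} together with the record of which labels of $S$ were marked. By Proposition \ref{prop:standardization} this datum is well defined and is a marked standard $k$--ribbon tableau, and every $T'\in SShT\pm^{(k)}(\lambda)$ arises this way: regarded as a semi--standard $k$--ribbon tableau all of whose strips are singletons, $T'$ is its own standardization. It therefore suffices to show that the fiber over a fixed $T'$ contributes exactly $F_{Des(T')}(X)$:
\[
\sum_{\substack{S\in SSShT^{(k)}(\lambda)\\ St(S)=T'}}X^{|S|}=F_{Des(T')}(X).
\]
Writing $R_1,\dots,R_n$ for the ribbons of $T'$ in the order used by $St$, I would establish this via the destandardization map: the $S$ in the fiber are exactly the fillings obtained by choosing a weakly increasing word $a_1\le a_2\le\cdots\le a_n$ of positive integers that strictly increases at every position of $Des(T')$ and relabelling $R_j$ by $a_j$, marked precisely when $R_j$ is marked in $T'$. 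Sending $S$ to the word $(a_1,\dots,a_n)$ is then a weight--preserving bijection onto the monomials indexing $F_{Des(T')}$.

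The content of that step, and the point I expect to be the main obstacle, is the dictionary between the strip conditions defining a semi--standard $k$--ribbon tableau and the descent set $Des(T')$: I need that two consecutive ribbons $R_j,R_{j+1}$ of $T'$ may legitimately be given a common value --- so that the equal--value run containing them forms a horizontal $k$--ribbon strip among the unmarked ribbons and a vertical $k$--ribbon strip among the marked ones --- if and only if $j\notin Des(T')$, and that whenever this is allowed the resulting relabelling is automatically a valid semi--standard $k$--ribbon tableau whose standardization is again $T'$. This should be checked by splitting into the four cases according to the markings of $j$ and of $j+1$ and using Lemma \ref{lem:7} to pass between $Des(T')$, the descent set of the underlying standard $k$--ribbon tableau, and the relative positions of the heads $H(R_j)$ and $H(R_{j+1})$; it is the shifted counterpart of the argument behind the ribbon--tableau identity for $GF^{(k)}_{\mu/\mu_0}$, with Stembridge's marked bookkeeping added. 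As an alternative to this direct verification, one may transport the problem through the weight--, diagonal-- and standardization--preserving bijection $\Phi^k_\lambda$ of Section \ref{sec:quotients} to the $k$--quotient, where each piece is an honest skew--shifted or folded tableau with genuine row and column conditions, and then invoke the fundamental quasisymmetric expansion of skew shifted $Q$--functions together with the identification $Q^f_{(\gamma,\mathfrak{d})}=Q_{\lambda/\delta_n}$ and the product formula \eqref{eq:5}.

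For the second identity I would fix a standard $k$--ribbon tableau $T\in SShT^{(k)}(\lambda)$ and let $w(T)$ be its head--reading word, a permutation of $[n]$. Since forming the marked versions $T'\in Mark(T)$ and passing from a marked reading word to its descent set depend only on $w(T)$ and on the chosen marking set, Lemma \ref{lem:7} identifies the multiset $\{\,Des(T'):T'\in Mark(T)\,\}$ with the multiset of descent sets occurring in Stembridge's single--tableau identity
\[
\sum_{T'\in Mark(T)}F_{Des(T')}(X)=2^{\,|Peak(Des(T))|+1}\,G_{Peak(Des(T))}(X),
\]
whose proof in \cite{MR1389788} is a statement purely about the permutation $w(T)$ and so applies verbatim. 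Summing over $T\in SShT^{(k)}(\lambda)$ and combining with the first identity yields
\[
RQ^{(k)}_\lambda(X)=\sum_{T'\in SShT\pm^{(k)}(\lambda)}F_{Des(T')}(X)=\sum_{T\in SShT^{(k)}(\lambda)}2^{\,|Peak(Des(T))|+1}\,G_{Peak(Des(T))}(X),
\]
which is the claim.
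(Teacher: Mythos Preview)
Your approach to the first identity is essentially the paper's: both partition the semi-standard $k$-ribbon tableaux by their marked standardization $St'(S)$ and show that the fiber over a fixed $T'$ contributes exactly $F_{Des(T')}$ via the destandardization bijection with compositions refining $Des(T')$, with the key verification being the two-case analysis through Lemma~\ref{lem:7}. The paper organizes this into two explicit claims (that $\phi_{T'}(S)$ refines $Des(T')$, and that every refining composition is hit uniquely), but the content is the same.

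For the second identity your route differs. You invoke Stembridge's single-tableau identity $\sum_{T'\in Mark(T)}F_{Des(T')}=2^{|Peak(Des(T))|+1}G_{Peak(Des(T))}$ directly from~\cite{MR1389788}, arguing that it depends only on the reading permutation $w(T)$ and hence transfers verbatim. The paper instead reproves this identity from scratch: it shows (Lemma~\ref{lemma:numberofi}, Lemma~\ref{lem:spikepeak}, Proposition~\ref{prop:kor}) that the descent map $Mark(T)\to\{D\subset[n-1]:Spike(D)\supset Peak(Des(T))\}$ is surjective and exactly $2^{|Peak(Des(T))|+1}$-to-one, by a counting argument comparing $|Mark(T)|=2^n$ with the number of targets. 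Your citation is certainly legitimate and shorter; the paper's version is self-contained and makes the combinatorics of markings-versus-descents explicit, which may be useful elsewhere in the paper. Either way the deduction of the peak expansion from the fundamental expansion is immediate once that identity is in hand.
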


A \emph{run} of a subset $D$ of $[n]$ is a maximal subset of consecutive numbers. We will denote by $Run(D)$ the set of the runs of $D$. Note that $\dot{\bigcup}_{Run(D)}=D$. For example, $D=\{2,3,5,8,9,10\}=\{2,3\} \cup \{5\} \cup \{8,9,10\}$.

\begin{prop} We can calculate the number of the peaks of a tableaux $T$ from its descent set as follows:
$$|P|=\begin{cases}
|Run(Des(T))|&  1\notin Des(T) \\
|Run(Des(T))| - 1 & 1\in Des(T)\end{cases}$$
\end{prop}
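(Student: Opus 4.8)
## Proof proposal for the final proposition

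The plan is to analyze how peaks of $\mathrm{Des}(T)$ sit inside the decomposition of $\mathrm{Des}(T)$ into runs. Recall $\mathrm{Peak}(D)=\{i \mid i\in D \text{ and } i-1\notin D\}$, so an element $i$ of $D$ is a peak exactly when it is the smallest element of its run. Therefore every run $r\in\mathrm{Run}(D)$ contributes at most one peak, namely $\min(r)$, and it contributes exactly one peak precisely when $\min(r)\neq 1$ (the convention in the peak-function setup being that peaks are drawn from $[2,3,\dots,n-1]$, so $1$ is never a peak even when $1\in D$). This gives a bijection between $\mathrm{Peak}(D)$ and $\{r\in\mathrm{Run}(D) \mid \min(r)\neq 1\}$.

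The steps I would carry out are: first, establish the bijection $r\mapsto\min(r)$ from runs of $D:=\mathrm{Des}(T)$ to the set of their minima, which is immediate since distinct runs are disjoint maximal intervals and hence have distinct, non-adjacent minima. Second, observe that the minimum of a run equals $1$ for at most one run, and this happens if and only if $1\in D$. Third, intersect the image of this bijection with $[2,\dots,n-1]$: if $1\notin D$, every run minimum is $\geq 2$ (and the maximal element of $D$, hence of any run, is $\leq n-1$ automatically since $D\subseteq[n-1]$), so $|P|=|\mathrm{Run}(D)|$; if $1\in D$, exactly one run has minimum $1$, which is excluded, so $|P|=|\mathrm{Run}(D)|-1$. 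Assembling these gives the displayed case formula.

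I do not expect a serious obstacle here; the only subtlety is pinning down the boundary convention, i.e. confirming that $1$ is excluded from peaks (consistent with $\mathrm{Peak}(D)\subseteq[2,n-1]$ as used in the Stembridge peak-function expansion quoted earlier) and that the top element $n$ cannot be a descent since $\mathrm{Des}(T)\subseteq[n-1]$, so no run can have minimum $n$ and no correction is needed at the upper end. Once those conventions are fixed, the argument is a one-line count of run minima, and the case split on whether $1\in D$ is exactly the case split in the statement.
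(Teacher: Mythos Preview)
Your proposal is correct and follows essentially the same approach as the paper: both arguments identify the peaks of $D=\mathrm{Des}(T)$ with the minimal elements of the runs of $D$, and then observe that the only run minimum that fails to be a peak is $1$, which occurs precisely when $1\in D$. Your discussion of the boundary conventions (excluding $1$ from peaks, and $D\subseteq[n-1]$ handling the top end) is slightly more explicit than the paper's, but the core idea is identical.
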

\begin{proof} This follows from the fact that the elements $j$ of the peak set satisfy $j \in D$ and $j-1\notin D$ for all $j >1$, so that elements of the peak set are given by the smallest elements of each run, with the exception of the case if there is a run starting with $1$.
\end{proof} 

\begin{lem} For any $T' \in$ $Mark(T)$, the descent set of $T'$ is independent of whether a given $i\leq n$ is marked if and only if:
\begin{itemize}
    \item $i>1$ with $i-1 \in Des(T)$, $i \notin Des(T)$ or
    \item $i=1 \notin Des(T)$ 
\end{itemize}
The number of such $i$ is given by $|Peak(T)|-1$.
\label{lemma:numberofi}
\end{lem}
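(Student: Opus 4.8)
The plan is to separate the biconditional from the enumeration; the biconditional is essentially a restatement of Lemma~\ref{lem:7}, while the substance lies in the count. For the biconditional, Lemma~\ref{lem:7} records how a single mark propagates to the descent set: the status of a position $j$ in $\mathrm{Des}(T')$ is decided by the mark of $j$ when $j \in \mathrm{Des}(T)$, and by the mark of $j+1$ when $j \notin \mathrm{Des}(T)$. Turning this around, the mark on a fixed coordinate $i$ can affect $\mathrm{Des}(T')$ only at position $i$ (and only when $i \in \mathrm{Des}(T)$) or at position $i-1$ (and only when $i-1 \notin \mathrm{Des}(T)$). Hence toggling the mark of $i$ is invisible to the entire descent set exactly when both channels are shut, i.e. $i \notin \mathrm{Des}(T)$ together with $i-1 \in \mathrm{Des}(T)$ when $i>1$, collapsing to $1 \notin \mathrm{Des}(T)$ at the boundary coordinate $i=1$. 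These are precisely the two alternatives in the statement, and since they refer only to $T$ the conclusion is uniform over all $T' \in \mathrm{Mark}(T)$.

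For the count, I would sort the qualifying coordinates by the runs of $\mathrm{Des}(T)$: the condition ``$i-1 \in \mathrm{Des}(T)$ and $i \notin \mathrm{Des}(T)$'' selects the coordinate one step past the right end of each run, so these coordinates biject with $\mathrm{Run}(\mathrm{Des}(T))$, while the extra boundary coordinate $i=1$ is present exactly when $1 \notin \mathrm{Des}(T)$. I would then invoke the preceding proposition, which writes $|\mathrm{Peak}(T)|$ as $|\mathrm{Run}(\mathrm{Des}(T))|$ or $|\mathrm{Run}(\mathrm{Des}(T))|-1$ according to whether $1 \in \mathrm{Des}(T)$, so as to convert the run count into the peak-number expression asserted in the lemma.

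The main obstacle is pinning down the additive constant. Both the free-coordinate tally and the peak-count proposition carry a case split on whether $1 \in \mathrm{Des}(T)$, and the whole content of the numerical claim is to check that the boundary coordinate $i=1$ and the run meeting the left edge interact so that the two case splits reconcile to the single value recorded in the lemma. I would settle this by writing the cases $1 \in \mathrm{Des}(T)$ and $1 \notin \mathrm{Des}(T)$ side by side, comparing the free-coordinate total with the proposition's formula in each, and cross-checking the resulting constant on a small descent set having a run that abuts the left edge and one that does not, before committing to the stated expression.
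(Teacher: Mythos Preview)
Your approach is essentially the paper's: invoke Lemma~\ref{lem:7} for the biconditional, then count the free coordinates by bijecting them with runs of $\mathrm{Des}(T)$ and translating via the preceding run--peak proposition. Your identification of the qualifying $i>1$ as ``one step past the right end of each run'' is the correct reading of the condition $i-1\in\mathrm{Des}(T),\ i\notin\mathrm{Des}(T)$; the paper's proof phrases this as ``one lower than the lowest number of a run,'' which is garbled, though the intended bijection with $\mathrm{Run}(\mathrm{Des}(T))$ is the same.

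Your caution about the additive constant is well placed. Carrying out your two-case comparison gives: if $1\in\mathrm{Des}(T)$ the count is $|\mathrm{Run}(\mathrm{Des}(T))|=|\mathrm{Peak}(T)|+1$, and if $1\notin\mathrm{Des}(T)$ the count is $|\mathrm{Run}(\mathrm{Des}(T))|+1=|\mathrm{Peak}(T)|+1$. So the number of free coordinates is $|\mathrm{Peak}(T)|+1$, not $|\mathrm{Peak}(T)|-1$ as printed in the lemma. This is a typo in the statement; the value $|\mathrm{Peak}(T)|+1$ is exactly what is used downstream in Proposition~\ref{prop:kor}, where one needs $2^{m}\cdot 2^{\,n-1-|\mathrm{Peak}(T)|}=2^{n}$ with $m=|\mathrm{Peak}(T)|+1$. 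Your planned cross-check on a small example would have surfaced this, so record the corrected constant when you write it up.
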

\begin{proof} The first part comes from Lemma \ref{lem:7}. Also, a number $i$ satisfies these conditions iff $i$ is one lower than the lowest number of a run of $Des(T)$. That means, if $1\notin D$, it will be equal to the number of runs of $Des(T)$. If $1\in T$, it will be $|Run(Des(T))|-1$.
\end{proof}
\begin{prop}  For any $T' \in$ $Mark(T)$, we have :
$$Spike(T') \supset Peak(T)$$
\end{prop}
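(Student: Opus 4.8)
The plan is to fix an arbitrary element $j\in Peak(T)=Peak(Des(T))$ and argue directly that $j\in Spike(Des(T'))$, whatever labels $T'$ happens to mark; since $j$ is arbitrary this yields $Peak(T)\subseteq Spike(T')$. By the definition of the peak set we may assume $j\ge 2$, $j\in Des(T)$ and $j-1\notin Des(T)$, so the goal is simply to verify that $j$ falls into one of the two clauses defining $Spike$ at the index $j$.

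The only tool I would use is Lemma~\ref{lem:7}, which applies verbatim in the $k$-ribbon setting, since the reading word of a $k$-ribbon tableau and the descent sets of its marked versions are defined exactly as for ordinary shifted tableaux. I would split into two cases according to whether the ribbon carrying the label $j$ in $T$ is marked in $T'$. If it is unmarked, then $j\in Des(T)$ together with the first part of Lemma~\ref{lem:7} gives $j\in Des(T')$, while $j-1\notin Des(T)$ together with the second part of Lemma~\ref{lem:7} (taking $i=j-1$, and using that $j$ is unmarked) gives $j-1\notin Des(T')$; thus $j\in Des(T')$ and $j-1\notin Des(T')$, which is the first clause of $Spike$. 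If instead the ribbon labelled $j$ is marked, the same two parts of Lemma~\ref{lem:7} give $j\notin Des(T')$ and $j-1\in Des(T')$, which is the second clause. Either way $j\in Spike(Des(T'))=Spike(T')$, completing the argument.

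I do not expect a genuine obstacle: once the case split on the marking of ribbon $j$ is in place, each case is a one-step consequence of Lemma~\ref{lem:7}. The one point worth flagging is the convention that $Peak(D)$ omits the index $1$ (equivalently, a run of $Des(T)$ beginning at $1$ contributes no peak): for $j=1$ the condition $1\in Spike(D)$ reduces to $1\in D$, which need not be preserved under marking, so the assumption $j\ge 2$ is genuinely used in the step above.
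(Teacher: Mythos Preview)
Your proof is correct and follows essentially the same approach as the paper's own proof: fix an element of $Peak(T)$, split into cases according to whether that index is marked in $T'$, and in each case read off the membership of $j$ and $j-1$ in $Des(T')$ via Lemma~\ref{lem:7}. The paper's version is slightly terser (it does not explicitly name Lemma~\ref{lem:7} or flag the $j\ge 2$ convention), but the logical structure is identical.
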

\begin{proof} Note that if $i\in Peak(Des(T))$, we have $i \in Des(T)$ and $i-1 \notin Des(T)$. For any given $T'$ if $i$ is unmarked on $T'$, then $i \in Des(T')$ and $i-1 \notin Des(T')$ so $i \in Spike(T')$. Otherwise $i$ is marked, so that we have $i \notin Des(T')$ and $i-1 \in Des(T')$, implying again that $i \in Spike(T')$.
 \end{proof}

The proposition above shows that the descent map takes the elements of $Mark(T)$ to subsets $D$ of $[n-1]$ with $Spike(D) \subset Peak(T)$. Next, we will show that this map is surjective. In fact, we will prove the stronger statement that the preimage of every element is of the same size.

\begin{lem} Assume $D$ is a subset of $[n-1]$ satisfying $Spike(D) \supset Peak(T)$. Then, there is a marked version $T'$ of $T$ such that $Des(T')=D$.
\label{lem:spikepeak}
\end{lem}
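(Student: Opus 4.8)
The plan is to write down the required marking explicitly and then check it works. By Lemma~\ref{lem:7}, passing from the standard tableau $T$ to a marked version $T'$ with marked set $M\subseteq[n]$ affects descents locally: for $i\in Des(T)$ one has $i\in Des(T')$ if and only if $i\notin M$, and for $i\notin Des(T)$ one has $i\in Des(T')$ if and only if $i+1\in M$. Hence the requirement $Des(T')=D$ is equivalent to a family of conditions on $M$: condition $(\mathrm{A}_i)$, ``$i\in M\iff i\notin D$'', for each $i\in Des(T)$, and condition $(\mathrm{B}_i)$, ``$i+1\in M\iff i\in D$'', for each $i\in[n-1]\setminus Des(T)$. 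The first step is to record these and observe that $(\mathrm{A}_i)$ governs membership of the element $i$ in $M$, while $(\mathrm{B}_i)$ governs membership of the element $i+1$.

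Next I would determine when two of these conditions concern the same element. This happens for an element $j$ precisely when $j\in Des(T)$ and $j-1\notin Des(T)$, i.e.\ when $j\in Peak(T)$; in that case $(\mathrm{A}_j)$ wants ``$j\in M\iff j\notin D$'' and $(\mathrm{B}_{j-1})$ wants ``$j\in M\iff j-1\in D$'', and these are compatible exactly when precisely one of $j-1,j$ lies in $D$, that is, exactly when $j\in Spike(D)$. This is the point where the hypothesis $Spike(D)\supseteq Peak(T)$ enters: it says precisely that no two of the conditions $(\mathrm{A}_i),(\mathrm{B}_i)$ clash.

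I would then set
\[
M \;=\; \{\, i\in Des(T) : i\notin D \,\}\ \cup\ \{\, i+1 : i\in[n-1],\ i\notin Des(T),\ i\in D \,\},
\]
leaving unmarked every element named by neither set, and let $T'$ be $T$ together with the marking $M$; since an arbitrary subset of $[n]$ yields a valid element of $Mark(T)$, this $T'$ is well defined. Finally I would verify all the conditions $(\mathrm{A}_i)$ and $(\mathrm{B}_i)$: away from peaks, the relevant element is governed by a single condition and the matching clause in the definition of $M$ makes it hold; at a peak $j\in Peak(T)$ both clauses apply to $j$ and, because $j\in Spike(D)$, they prescribe the same membership, so $(\mathrm{A}_j)$ and $(\mathrm{B}_{j-1})$ hold simultaneously. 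The boundary indices $i=1$ and $i=n$ involve no overlap of conditions and follow from the same bookkeeping. This yields $Des(T')=D$.

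I expect the last verification to be the main obstacle: one must isolate exactly which conditions can interfere (only those arising from peaks), confirm that the $Spike/Peak$ hypothesis resolves every interference, and keep the directions of the equivalences and the edge indices $1$ and $n$ straight. The argument is elementary, but it is the kind of finite case analysis in which an off-by-one or a reversed ``iff'' is easy to commit.
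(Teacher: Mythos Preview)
Your proposal is correct and essentially the same as the paper's: the marking set $M=\{i\in Des(T):i\notin D\}\cup\{i+1:i\notin Des(T),\ i\in D\}$ you write down is exactly what the paper constructs by its step-by-step marking procedure, and both verifications reduce to checking that the only possible clash (at a peak $j$) is ruled out by $j\in Spike(D)$. Your presentation via the constraint system $(\mathrm{A}_i),(\mathrm{B}_i)$ is slightly more structured, but the underlying argument is identical.
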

\begin{proof} Let us generate a marked version $T'$ of $T$ as follows: Starting with $i=1$, at Step $i$ we mark $i$ if $i\in Des(T)$ and $i \notin D$, and we mark $i+1$ if $i \notin Des(T)$ and $i \in D$ (marking the same number a second time has no effect). Then we move on to the next number, till we go through all $i\leq n-1$.

Let us verify that the descent set of $T'$ is indeed equal to $D$. For a fixed $i$ assume $i\in Des(T)$. Then by Lemma \ref{lem:7} $i$ is a descent of $T'$ iff $i$ is unmarked. Therefore, it is sufficent to show $i$ is unmarked iff $i\in D$. If $i \notin D$, then we marked $i$ onStep $i$, so $i \in Des(T')$. Otherwise $i \in D$, and we can only have marked $i$ at step $i-1$. This implies $i-1 \notin Des(T)$ and $i-1 \in D$. This contradicts our assumption $Spike(D) \supset Peak(T)$ as $i$ is a peak of $T$ but not a spike of $T$. The case $i\notin Des(T)$ is similar.
\end{proof}

\begin{prop} The descent map taking elements of  $Mark(T)$ to subsets $D$ of $[n-1]$ with $Spike(D) \supset Peak(Des(T))$ is a $2^m$ to one cover, where $m=|Peak(Des(T))|+1$
\label{prop:kor}
\end{prop}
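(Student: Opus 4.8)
The plan is to show that the descent map restricted to $\mathrm{Mark}(T)$ hits every $D$ with $\mathrm{Spike}(D)\supseteq \mathrm{Peak}(\mathrm{Des}(T))$ exactly $2^m$ times, where $m=|\mathrm{Peak}(\mathrm{Des}(T))|+1$. Surjectivity onto this set of $D$'s is already Lemma \ref{lem:spikepeak}, so the real content is that each fiber has constant size $2^m$; combined with the fact that $|\mathrm{Mark}(T)|=2^n$ and a count of the relevant $D$'s, one gets the precise value $2^m$. First I would fix a $D$ with $\mathrm{Spike}(D)\supseteq\mathrm{Peak}(\mathrm{Des}(T))$ and analyze, coordinate by coordinate, how much freedom there is in choosing a marking $M\subseteq[n]$ with $\mathrm{Des}(T')=D$. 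By Lemma \ref{lem:7}, whether $i\in\mathrm{Des}(T')$ is controlled as follows: if $i\in\mathrm{Des}(T)$ then $i\in\mathrm{Des}(T')$ iff $i$ is unmarked; if $i\notin\mathrm{Des}(T)$ then $i\in\mathrm{Des}(T')$ iff $i+1$ is marked. So the marking status of $i$ is forced for each $i$ \emph{except} precisely in the situations isolated in Lemma \ref{lemma:numberofi}: namely the indices $i$ with ($i>1$, $i-1\in\mathrm{Des}(T)$, $i\notin\mathrm{Des}(T)$) or ($i=1\notin\mathrm{Des}(T)$), of which there are $|\mathrm{Peak}(\mathrm{Des}(T))|-1$. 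Wait — I must be careful: the constraint on $i$ coming from "is $i$ a descent of $T'$" involves the mark of $i$ when $i\in\mathrm{Des}(T)$ but the mark of $i+1$ when $i\notin\mathrm{Des}(T)$, so a single index's marking can be pinned down by two different descent-conditions, and I need to check these never conflict — that is exactly where the hypothesis $\mathrm{Spike}(D)\supseteq\mathrm{Peak}(\mathrm{Des}(T))$ is used (as in the proof of Lemma \ref{lem:spikepeak}).

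The key steps, in order: (1) Record from Lemma \ref{lem:7} the dictionary above, and for a fixed valid $D$ determine, for each $i\in[n]$, whether the mark of $i$ is forced or free. (2) Show the number of free indices is exactly $m=|\mathrm{Peak}(\mathrm{Des}(T))|+1$. Here I expect the bookkeeping to be: the $|\mathrm{Peak}(\mathrm{Des}(T))|-1$ indices of Lemma \ref{lemma:numberofi} are always free (their mark affects no descent condition at all, since $i-1\in\mathrm{Des}(T)$ kills the "mark of $i$" condition and $i\notin\mathrm{Des}(T)$ shifts the relevant condition to the mark of $i+1$ which is governed elsewhere); plus the index $n$ is free when $n\notin\mathrm{Des}(T)$-type edge cases occur, and there is an extra genuinely-free index near the top of the alphabet — I would track the two boundary indices $1$ and $n$ explicitly to account for the "$+1$" and the discrepancy between $|\mathrm{Peak}|-1$ and $m=|\mathrm{Peak}|+1$, i.e. two more free marks beyond the $|\mathrm{Peak}|-1$ of Lemma \ref{lemma:numberofi}. (3) Conclude that the fiber over $D$ has size $2^m$ independently of $D$. (4) Since $\mathrm{Mark}(T)$ has $2^n$ elements and every element maps to some valid $D$, the number of valid $D$'s is $2^{n-m}$, and the cover is uniformly $2^m$-to-one.

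The main obstacle I anticipate is step (2): pinning down precisely why the number of free marks is $|\mathrm{Peak}(\mathrm{Des}(T))|+1$ rather than $|\mathrm{Peak}(\mathrm{Des}(T))|-1$. The subtlety is that a valid $D$ is allowed to \emph{strictly} contain extra spikes beyond $\mathrm{Peak}(\mathrm{Des}(T))$ — but once $D$ is \emph{fixed}, those extra spikes correspond to indices whose marks are \emph{forced} by $D$, not free; the genuinely free marks are only those $i$ whose marking status is invisible to $\mathrm{Des}(T')$ entirely, and one must verify there are exactly $m$ of them. Concretely I would argue: the map $M\mapsto \mathrm{Des}(T')$ factors through the restriction of $M$ to a specific set $J\subseteq[n]$ of "relevant" indices with $|J|=n-m$, the induced map $\{0,1\}^J\to\{D:\mathrm{Spike}(D)\supseteq\mathrm{Peak}(\mathrm{Des}(T))\}$ is a bijection (injectivity is the content of Lemma \ref{lem:7} read in reverse; surjectivity is Lemma \ref{lem:spikepeak}), and the complementary $m$ coordinates of $M$ are unconstrained — giving the $2^m$-to-one statement with $m=n-|J|=|\mathrm{Peak}(\mathrm{Des}(T))|+1$. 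Everything else — the edge-case analysis at $i=1$ and $i=n$, and checking no two forced conditions collide — is routine given the lemmas already proved.
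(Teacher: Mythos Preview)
Your approach is sound but takes a longer route than the paper's. The paper gives a short counting/pigeonhole argument: (i) the number of subsets $D\subseteq[n-1]$ with $\mathrm{Spike}(D)\supseteq\mathrm{Peak}(\mathrm{Des}(T))$ is $2^{\,n-1-|\mathrm{Peak}(\mathrm{Des}(T))|}$; (ii) the map is surjective onto this set by Lemma~\ref{lem:spikepeak}; (iii) each fiber has size at least $2^m$ by Lemma~\ref{lemma:numberofi} (whose stated count ``$|\mathrm{Peak}(T)|-1$'' appears to be a typo for $|\mathrm{Peak}(T)|+1$, which is what the paper's own use of the lemma here assumes); since $2^m\cdot 2^{\,n-1-|\mathrm{Peak}|}=2^n=|\mathrm{Mark}(T)|$, every fiber must have size exactly $2^m$.

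Your plan instead tries to establish each fiber's size directly by classifying which marks are free versus forced, and you correctly flag the passage from the $|\mathrm{Peak}|-1$ of Lemma~\ref{lemma:numberofi} to the needed $|\mathrm{Peak}|+1$ as the sticking point. The paper's argument sidesteps this entirely: it needs only the \emph{lower} bound on fiber size, and pigeonhole against the independently computed size of the target forces equality. Your step~(4) runs the logic in the opposite direction---deducing the number of valid $D$'s from the fiber size---whereas the paper computes the number of valid $D$'s first (a standard count: requiring $\mathrm{Peak}(\mathrm{Des}(T))\subseteq\mathrm{Spike}(D)$ fixes $|\mathrm{Peak}|$ of the $n-1$ independent bits determining $D$) and uses that to close the argument. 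Both routes work; the paper's is shorter and avoids the edge-case bookkeeping at $i=1$ and $i=n$ that you anticipate.
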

\begin{proof} The number of subsets $D$ of $[n-1]$ with $Spike(D) \supset Peak(Des(T))$ is given by $2^{n-1 -|Peak(Des(T))|}$. By Lemma \ref{lem:spikepeak}, we know that the descent map is surjective. By Lemma \ref{lemma:numberofi}, the preimage of each element under the descent map contains at least $2^m$ elements. $2^{m}\times 2^{n-1 -|Peak(Des(T))|}=2^n$ which is the cardinality of $Mark(T)$, so the preimage of each element must contain exactly $2^m$ elements.
\end{proof}

Now we are ready to prove Theorem \ref{thm:peak} from the beginning of the section.

\begin{proof}[Proof of  Theorem \ref{thm:peak}]
 Let $S$ be a semi-standard $k$-ribbon tableaux of shape $\lambda$. We have already defined the standardization of $S$ (Definition \ref{def:standardization}). Let us denote by $St'(S)$ the marked standardization of $S$, which is simply standardization while keeping the marked cells marked. 
We will show that there is a bijection $\phi_{T'}$ between semi-standard $k$-ribbon tableaux $S$ that standardize to $T'$ and the combinations refining $Des(T')$, satisfying $x^{|S|}=x^{|\phi_{T'}(S)|}$.
This will imply:
$$\sum_{T'\in SShT\pm^{(k)}(\lambda)}F_{Des(T')}(X)=\sum_{T'}\sum_{C\in Ref(Des(T'))}X^C=\sum_{T'}\sum_{C}x^{|\phi_{T'}^{-1}(C)|}=\sum_{S\in SSShT^{(k)}(\lambda)}X^{|S|}=RQ^{(k)}_{\lambda}(X)$$
where for a combination $C=(c_1,c_2,\ldots,c_t)$ we use $X^C$ to denote $x_1^{c_1}x_2^{c_2}\cdots x_t^{c_t}$.

Assume $S$ satisfies $St'(S)$=$T'$. We define $\phi_{T'}(S)=(i_1,i_2.. )$ where $i_m$ stands for the total number of cells labelled $m$ or $m'$ on $S$. As $S$ has $n$ ribbons, $\phi_{T'}(S)$ will be a combination of $n$ that satisfies $x^{|S|}=x^{|\phi_{T'}(S)|}$.

\begin{claimm} $S$ refines $Des(T')$.
\end{claimm}
\begin{claimmproof} Let $S$ be a semi-standard filling with $St'(S)=T'$. Consider the pre-image of ribbon $R_i$ (the unique ribbon labeled $i$ or $i'$ on $T'$) under $St'$. We will denote the label of this ribbon in S by $St'^{-1}(i)$. To prove that $S$ refines $Des(T')$, it is sufficient to show that if $i$ is a descent of $T'$, then $St'^{-1}(i)$ and $St'^{-1}(i+1)$ are not both elements of $\{m,m'\}$ for any $m$ (Note that, by the standardization algorithm, we will have $St'^{-1}(i)\leq St'^{-1}(i+1)$in any case). 

Let $i$ be a descent of $T'$. By Lemma \ref{lem:7}, there are two possiblities:
\begin{itemize}
    \item Case 1, $i \in Des(T)$ and $i$ is not marked in $T'$: Then $St'^{-1}(i)$ is an unmarked number $m$. $St'^{-1}(i) \geq m$, so it can not be $m'$. Assume it is also $m$. Then, we have two ribbons labeled $m$, but $diag(R_i) > diag(R_{i+1})$ by the definition of standardization (Definition \ref{def:standardization}). 
    \item Case 2, $i \notin Des(T)$ and $i+1$ is marked in $T'$: This means $St'^{-1}(i+1)$ is a marked number $m'$, and $St'^{-1}(i) \geq m'$ can not be $m$. It can not be $m'$ either, because as in the first case, we get two ribbons labeled $m'$ but $diag(R_i) < diag(R_{i+1})$, which as in Case 1, contradicts the definition of standardization.
\end{itemize} \end{claimmproof}
\begin{claimm} For any combination $C$ refining $Des(T)$ there is a unique $S$ that standardizes to $T'$ with $\phi_{T'}(S)=C$
\end{claimm}
\begin{claimmproof} Let $C=(c_1,c_2,\ldots,c_T)$ be a combination of $n$ that refines $Des(T')$. We will define $S$ by labeling the ribbons $R_1$ to $R_{c_1}$ with $1$, ribbons $R_{c_1+1}$ to $R_{c_1+c_2}$ with $2$, $R_{c_1+c_2+1}$ to $R_{c_1+c_2+c_3}$ with $3$ and so on, and then marking the image of $R_i$ iff $i$ is marked in $T$. We need to show that this $S$ is semi-simple, and it standardizes to $T'$. Uniqueness then, comes from the fact that the placement of the markings are preserved.

Assume ribbons $R_i$ and $R_{i+1}$ have the same unmarked label $m$. Then, $i\notin Des(T')$ and $i$ and $i+1$ are both not labeled in $T'$, so we must have $i \notin Des(T)$ by Lemma \ref{lem:7}. That means, $diag(R_i) < diag(R_{i+1})$, so unmarked numbers are ordered so that their diagonals will increase in $T'$. Similarly, if $R_i$ and $R_{i+1}$ both have the same marked label $m'$, then $i\in Des(T')$ and $diag(R_i)  > diag(R_{i+1})$. These mean that we have $St'(S)=T$.

Additionally, we can remove ribbon $R_{i+1}$ before $R_{i}$. This implies that if both ribbons are labeled $m$, $H(R_{i+1})$ is going to be strictly to the right of $H(R_i)$ as $diag(R_i) < diag(R_{i+1})$. If they are both labeled $m'$, $H(R_{i+1})$ is going to be strictly above $H(R_i)$ as $diag(R_i)  > diag(R_{i+1})$.
\end{claimmproof}

This proves the expansion of the $k$-ribbon function in terms of descent functions. The peak function expansion follows by Proposition \ref{prop:kor}:
$$RQ^{(k)}_{\lambda}(X)=\sum_{T'\in SShT\pm^{(k)}(\lambda)}F_{Des(T')}=\sum_{T}\sum_{T'\in Mark(T)} F_{Des(T')}=\sum_{T}2^{|Peak(Des(T))|+1}G_{Peak(Des(T))}$$
\end{proof}

 \section{Type B LLT polynomials}
 
 In \cite{MR1434225}, Lascoux, Leclerc and Thibon give a $q$-analogue for the $k$-ribbon that is Schur positive functions for the unshifted case . For this, they use the spin statistic on ribbon tableaux which depends on the total height of its ribbons. In this section, we show that there is no direct way of extending the concept of height to double ribbons that will give positive structure coefficients in the shifted case. Nevertheless, we are able to give a non-trivial $q$-analogue for the shifted ribbon functions and prove its Schur Q-positivity.
 
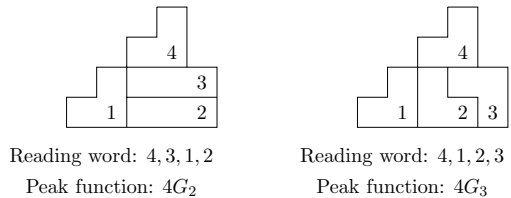
\begin{figure}[ht]
\centering
\scalebox{.7}{
\begin{tikzpicture}[scale=4/7]
\draw (0,0)--(1,0)--(1,-2)--(2,-2)--(2,-4)--(-3,-4)--(-3,-3)--(-2,-3)--(-2,-2)--(-1,-2)--(-1,-1)--(0,-1)--(0,0);
\draw (1,-2)--(-1,-2)--(-1,-4) (-1,-3)--(2,-3);
\node[font=\large] at (0.5,-1.5) {4};
\node[font=\large] at (1.5,-2.5) {3};
\node[font=\large] at (1.5,-3.5) {2};
\node[font=\large] at (-1.5,-3.5) {1};
\node[font=\large] at (-1.5,-5) {Reading word: $4,3,1,2$};
\node[font=\large] at (-1.5,-6) {Peak function: $4G_{2}$};
\end{tikzpicture}\qquad \qquad
\begin{tikzpicture}[scale=4/7]
\draw (0,0)--(1,0)--(1,-2)--(2,-2)--(2,-4)--(-3,-4)--(-3,-3)--(-2,-3)--(-2,-2)--(-1,-2)--(-1,-1)--(0,-1)--(0,0);
\draw (1,-2)--(-1,-2)--(-1,-4) (0,-2)--(0,-3)--(1,-3)--(1,-4);
\node[font=\large] at (0.5,-1.5) {4};
\node[font=\large] at (0.5,-3.5) {2};
\node[font=\large] at (1.5,-3.5) {3};
\node[font=\large] at (-1.5,-3.5) {1};
\node[font=\large] at (-1.5,-5) {Reading word: $4,1,2,3$};
\node[font=\large] at (-1.5,-6) {Peak function: $4G_{3}$};
\end{tikzpicture}}
    \caption{The two $3$-ribbon tableaux of shape $\{5,4,2,1\}$ and their corresponding peak functions}
    \label{fig:cex1}
\end{figure}

\begin{thm} There is no “intrinsic” definition for the height of a double ribbon, which, along with the usual definition of heights for the single ribbon, gives a Schur Q-positive or even a Schur positive function. Here by intrinsic, we mean there is no definition that only comes from the shape of the double ribbon, and is independent of its placement or the other ribbons in the shape.
\end{thm}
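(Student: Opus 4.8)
The plan is to prove the theorem by producing one explicit counterexample, the shape $\lambda=(5,4,2,1)$ with $k=3$ already featured in Figure~\ref{fig:cex1}, and to show something stronger than what is claimed: no choice of a height for double ribbons can even make the resulting $q$-deformation a \emph{symmetric} function of $X$, so it certainly cannot be Schur positive or Schur $Q$-positive.

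First I would pin down the framework. An ``intrinsic'' height is a function $h$ assigning to every double ribbon $Q$ a number $h(Q)$ that depends only on the skew shape of $Q$ (I will in fact allow it to depend on the placement too — this costs nothing below). Together with the usual single‑ribbon height it gives a spin statistic $\mathrm{spin}(T)=\sum_R(|R|-\mathrm{ht}(R)-1)/2$ on standardized shifted $k$-ribbon tableaux $T$, a cospin $\mathrm{cospin}(T)=\mathrm{spin}(T^{\ast})-\mathrm{spin}(T)$, and, in imitation of the LLT construction, a candidate refinement
\[ RQ^{(k)}_{\lambda}(X;q)=\sum_{T\in SShT\pm^{(k)}(\lambda)}q^{\mathrm{cospin}(T)}F_{Des(T)}(X)=\sum_{T\in SShT^{(k)}(\lambda)}q^{\mathrm{cospin}(T)}\,2^{|Peak(T)|+1}G_{Peak(T)}(X), \]
which specializes at $q=1$ to Theorem~\ref{thm:peak}. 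The argument is insensitive to grading by $\mathrm{spin}$ versus $\mathrm{cospin}$ or to an overall power $q^{1/2}$; the only fact I need is that a Schur positive (a fortiori Schur $Q$-positive) power series in $X$ with polynomial coefficients in $q$ must have each coefficient of a power of $q$ symmetric in $X$.

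Next I would analyze the $3$-ribbon tableaux of $\lambda=(5,4,2,1)$ using Theorem~\ref{cor:ribboncorrespondence} and Proposition~\ref{prop:remove}. The $3$-core is empty; the only removable ribbon from $(5,4,2,1)$ is the double ribbon $D$ deleting the parts $2,1$, giving $(5,4)$; from $(5,4)$ there is a binary choice $(5,4)\to(4,2)$ or $(5,4)\to(5,1)$ (both by single-ribbon moves), after which everything is forced down through $(2,1)$ and then to $\varnothing$, the last step being a second double ribbon $D'$, the diagram of $(2,1)$ itself. Hence $\lambda$ has exactly two standard $3$-ribbon tableaux $T_1,T_2$; in both, the outermost and innermost ribbons are the same double ribbons $D$ and $D'$, and $T_1,T_2$ differ only in the two intermediate single ribbons. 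A direct check of the two branches shows that the branch through $(4,2)$ has both intermediate ribbons bent, of height $2$ (spin $0$ each), while the branch through $(5,1)$ has both horizontal, of height $1$ (spin $\tfrac12$ each). Since $D$ and $D'$ occur identically (same shape and same location) in $T_1$ and $T_2$, the chosen height $h$ contributes the same amount to $\mathrm{spin}(T_1)$ and to $\mathrm{spin}(T_2)$; therefore $\mathrm{spin}(T_1)-\mathrm{spin}(T_2)=-1$ for every choice of $h$, and so $\mathrm{cospin}(T_1)\neq\mathrm{cospin}(T_2)$.

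Finally I would draw the contradiction. By Figure~\ref{fig:cex1} the peak functions of $T_1$ and $T_2$ are $4G_{\{2\}}$ and $4G_{\{3\}}$, so $RQ^{(3)}_{(5,4,2,1)}(X;q)=q^{a}\,4G_{\{2\}}(X)+q^{b}\,4G_{\{3\}}(X)$ with $a\neq b$. Now $G_{\{2\}}$ and $G_{\{3\}}$ are individually not symmetric — e.g. in two variables $G_{\{2\}}=x_1^{2}x_2^{2}+x_1x_2^{3}$ and $G_{\{3\}}=x_1^{3}x_2+x_1^{2}x_2^{2}$, each with unequal coefficients on $x_1x_2^{3}$ and $x_1^{3}x_2$ — even though their sum $\tfrac14 Q_{(3,1)}$ is symmetric. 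Since $a\neq b$, the coefficient of the smaller of $q^{a},q^{b}$ in $RQ^{(3)}_{(5,4,2,1)}(X;q)$ equals $4G_{\{2\}}$ or $4G_{\{3\}}$, which is not symmetric; hence $RQ^{(3)}_{(5,4,2,1)}(X;q)$ is not symmetric and cannot be Schur or Schur $Q$-positive. I expect the main obstacle to be the bookkeeping of the preceding paragraph: the whole argument rests on the two assertions that $T_1$ and $T_2$ contain literally the same double ribbons and that their intermediate single ribbons have genuinely different heights, because it is exactly this that forces the $h$-dependence to cancel out of $\mathrm{cospin}(T_1)-\mathrm{cospin}(T_2)$, so that no definition — however liberal a reading one gives the word ``intrinsic'' — can repair positivity.
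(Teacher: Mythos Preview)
Your proposal is correct and follows essentially the same approach as the paper: both use the shape $(5,4,2,1)$ with $k=3$, observe that the two standard tableaux share the same double ribbons but differ in the heights of their single ribbons (forcing $c\neq d$ regardless of how one defines double-ribbon height), and conclude by noting that $G_{\{2\}}$ and $G_{\{3\}}$ are individually not symmetric, hence not Schur or Schur $Q$-positive. Your write-up is more detailed---you explicitly trace the abacus moves and compute $G_{\{2\}},G_{\{3\}}$ in two variables---but the argument is the same one the paper gives.
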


\begin{proof} If we consider the example in Fig \ref{fig:cex1}, we can see that the only difference between the two fillings is the placement of ribbons 2 and 3. For any intrinsic definition, the heights of the double ribbons 4 and 1 would match in the two fillings. The total height of 2 and 3 being higher on the shape on the right, we get a function $4q^cG_2+4q^dG_3$ with $c\neq d$ which is not Schur P-positive. In fact, $G_2$ and $G_3$ by themselves are not even Schur positive or symmetric functions.
\end{proof}

\begin{figure}[ht]
    \centering
    \scalebox{.7}{
\begin{tikzpicture}[scale=4/7]
\draw (0,0)--(1,0)--(1,-2)--(2,-2)--(2,-4)--(-3,-4)--(-3,-3)--(-2,-3)--(-2,-2)--(-1,-2)--(-1,-1)--(0,-1)--(0,0);
\draw (1,-2)--(-1,-2)--(-1,-4) (-1,-3)--(2,-3);
\draw (2,-2)--(3,-2)--(3,-3)--(4,-3)--(4,-4)--(2,-4);
\node[font=\large] at (-2.5,-0.5) {$\textrm{I}$};
\node[font=\large] at (0.2,-1.5) {5/4};
\node[font=\large] at (1.5,-2.5) {3};
\node[font=\large] at (1.5,-3.5) {2};
\node[font=\large] at (3.1,-3.5) {4/5};
\node[font=\large] at (-1.5,-3.5) {1};
\node[font=\large] at (0.7,-5) {Reading words: $5,3,1,2,4$};
\node[font=\large] at (3.1,-6) {$4,3,1,2,5$};
\node[font=\large] at (1.04,-7) {Peak function: $8G_{\{2,4\}}+4G_{\{2\}}$};
\end{tikzpicture}\hspace{0.2cm}
\begin{tikzpicture}[scale=4/7]
\draw (0,0)--(1,0)--(1,-2)--(2,-2)--(2,-4)--(-3,-4)--(-3,-3)--(-2,-3)--(-2,-2)--(-1,-2)--(-1,-1)--(0,-1)--(0,0);
\draw (1,-2)--(-1,-2)--(-1,-4) (0,-2)--(0,-3)--(1,-3)--(1,-4);
\draw (2,-2)--(3,-2)--(3,-3)--(4,-3)--(4,-4)--(2,-4);
\node[font=\large] at (-2.5,-0.5) {$\textrm{II}$};
\node[font=\large] at (0.2,-1.5) {5/4};
\node[font=\large] at (0.5,-3.5) {2};
\node[font=\large] at (1.5,-3.5) {3};
\node[font=\large] at (3.1,-3.5) {4/5};
\node[font=\large] at (-1.5,-3.5) {1};
\node[font=\large] at (0.7,-5) {Reading words: $5,1,2,3,4$};
\node[font=\large] at (3.1,-6) {$4,1,2,3,5$};
\node[font=\large] at (1.,-7) {Peak function: $4G_{\{4\}}+4G_{\{3\}}$};
\end{tikzpicture}\hspace{0.2cm}
\begin{tikzpicture}[scale=4/7]
\draw (0,0)--(1,0)--(1,-2)--(2,-2) (2,-4)--(-3,-4)--(-3,-3)--(-2,-3)--(-2,-2)--(-1,-2)--(-1,-1)--(0,-1)--(0,0);
\draw (1,-2)--(-1,-2)--(-1,-4) (0,-2)--(0,-3)--(1,-3)--(1,-4);
\draw (2,-2)--(3,-2)--(3,-3)--(4,-3)--(4,-4)--(2,-4);
\draw (1,-3)--(3,-3);
\node[font=\large] at (-2.5,-0.5) {$\textrm{III}$};
\node[font=\large] at (0.5,-1.5) {5};
\node[font=\large] at (0.5,-3.5) {2};
\node[font=\large] at (2.5,-2.5) {4};
\node[font=\large] at (3.5,-3.5) {3};
\node[font=\large] at (-1.5,-3.5) {1};
\node[font=\large] at (0.7,-5) {Reading word: $5,4,1,2,3$};

\node[font=\large] at (1,-7) {Peak function: $4G_{\{3\}}$};
\end{tikzpicture}}
    \caption{Standard $3$-ribbon tableaux of shape $(7,5,2,1)$ and their corresponding peak functions}
    \label{fig:cex2}
\end{figure}
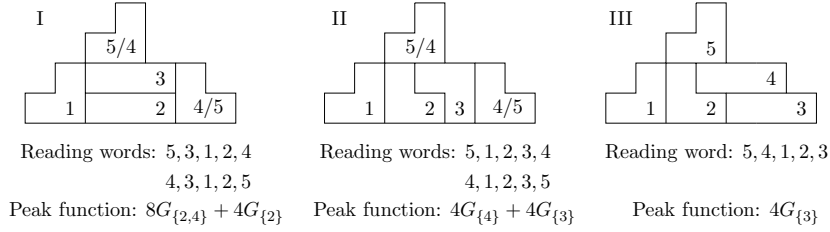
 
 Another example  where all the tableaux need to have the same cospin to obtain Schur Q-positivity is given in Figure \ref{fig:cex2}. A common point of these two examples with trivial cospin is that both have only one piece in their $3$-quotient, which motivates a slightly technical $q$-analogue defined through the quotient.

\begin{defn} For a shifted shape $\lambda$ with $k$-abacus representation $(\alpha^{(1)},\alpha^{(2)},\ldots,\alpha^{(k)})$, we define the $q$-analogue of the shifted $k$-ribbon Q-function as follows:
\begin{eqnarray}
QR^{(k)}_{\lambda}(X;q):= Q_{\alpha^{(k)}}(X) \sum_{T \in SRT_{\lfloor k/2 \rfloor}(\mu)} q^{cospin (T)}2^{|Peak(T)|+1} F_{Peak(T)}(X)
\end{eqnarray} where $\mu$ is the unshifted partition corresponding to the $\lfloor k/2 \rfloor$-quotient $(\mu^1,\mu^2,\ldots,\mu^{\lfloor k/2 \rfloor})$, with $\mu^i=a^{(i)} \diamond a^{(k-i)}$ if $i<k/2$ and $\mu_{k/2}=a^{(k/2)}\diamond \varnothing$ when $k$ is even.
\end{defn}

Note that when $q=1$, we get the formulation of $QR^{(k)}_{\lambda}(X)$ given in Equation $\ref{eq:5}$, so $QR^{(k)}_{\lambda}(X;1)=QR^{(k)}_{\lambda}(X)$ as desired.
\begin{thm} \label{thm:last}The function $QR^{(k)}_{\lambda}(X;q)$ has an expansion into Schur's Q-functions with coefficients from $\mathbb{Z}^+[q]$.
\end{thm}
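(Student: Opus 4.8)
Set $k'=\lfloor k/2\rfloor$. The plan is to recognize the sum appearing in the definition of $QR^{(k)}_\lambda(X;q)$ as the image of an ordinary LLT polynomial under a ``peak map,'' and then to run that image through (i) the Schur positivity of LLT polynomials, (ii) the Schur $Q$-positivity of folded $Q$-functions established in Section 4, and (iii) Stembridge's product theorem \ref{thm:Stembridge}. First I would introduce the linear operator $\vartheta$ on quasisymmetric functions determined by $\vartheta(F_D)=2^{|Peak(D)|+1}G_{Peak(D)}$; it is well defined since the $F_D$ form a basis. The content of Proposition \ref{prop:kor} together with Lemma \ref{lem:7} is precisely that $\vartheta$ implements ``summing over all markings'': for a standard object $T$,
$$\sum_{T'\in Mark(T)}F_{Des(T')}=2^{|Peak(T)|+1}G_{Peak(T)}=\vartheta\bigl(F_{Des(T)}\bigr).$$
Since the ordinary LLT polynomial of the $k'$-quotient is $GF^{(k')}_{\mu/\mu_0}(X;q)=\sum_{T\in SRT_{k'}(\mu)}q^{cospin(T)}F_{Des(T)}(X)$ (with $\mu_0$ the $k'$-core of $\mu$), applying $\vartheta$ term by term gives
$$\sum_{T\in SRT_{k'}(\mu)}q^{cospin(T)}2^{|Peak(T)|+1}G_{Peak(T)}(X)=\vartheta\bigl(GF^{(k')}_{\mu/\mu_0}(X;q)\bigr),$$
so that $QR^{(k)}_\lambda(X;q)=Q_{\alpha^{(k)}}(X)\cdot\vartheta\bigl(GF^{(k')}_{\mu/\mu_0}(X;q)\bigr)$.

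By the Schur positivity of LLT polynomials \cite{MR2115257} we may write $GF^{(k')}_{\mu/\mu_0}(X;q)=\sum_\nu c_{\mu\nu}(q)\,s_\nu(X)$ with every $c_{\mu\nu}(q)\in\mathbb{Z}^+[q]$, whence by linearity $\vartheta\bigl(GF^{(k')}_{\mu/\mu_0}(X;q)\bigr)=\sum_\nu c_{\mu\nu}(q)\,\vartheta(s_\nu)(X)$. The crux is then the claim that $\vartheta(s_\nu)$ is Schur $Q$-positive for every partition $\nu$. I would prove this by identifying $\vartheta(s_\nu)$ with a folded $Q$-function: Gessel's expansion $s_\nu=\sum_{T\in SYT(\nu)}F_{Des(T)}$ gives $\vartheta(s_\nu)=\sum_{T\in SYT(\nu)}2^{|Peak(T)|+1}G_{Peak(T)}$, and since a \emph{standard} folded tableau of shape $(\nu,\mathfrak d)$ is by definition just a standard Young tableau of $\nu$ (the specialized diagonal plays no role in the standard case), this is exactly the peak-function expansion of $Q^f_{(\nu,\mathfrak d)}(X)$. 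That peak expansion of the folded $Q$-function is the natural folded analogue of Stembridge's peak expansion of $Q_\lambda$ and of Theorem \ref{thm:peak}, obtained by the same standardization bookkeeping: the descent set of a marked standard folded tableau obeys the mechanics of Lemma \ref{lem:7}, so Proposition \ref{prop:kor} applies verbatim. Granting it, the corollary of Section 4 (folded $Q$-functions are Schur $Q$-positive, equal to skew Schur $Q$-functions $Q_{\epsilon/\delta_n}$) together with Theorem \ref{thm:Stembridge} yields $\vartheta(s_\nu)=\sum_\rho g^\nu_\rho\,Q_\rho(X)$ with $g^\nu_\rho\in\mathbb{N}$.

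Assembling the pieces, $\vartheta\bigl(GF^{(k')}_{\mu/\mu_0}(X;q)\bigr)=\sum_\rho d_\rho(q)\,Q_\rho(X)$ with $d_\rho(q)=\sum_\nu c_{\mu\nu}(q)\,g^\nu_\rho\in\mathbb{Z}^+[q]$. Multiplying by $Q_{\alpha^{(k)}}(X)$ and applying the product half of Theorem \ref{thm:Stembridge} once more,
$$QR^{(k)}_\lambda(X;q)=\sum_\rho d_\rho(q)\,Q_{\alpha^{(k)}}(X)Q_\rho(X)=\sum_\tau\Bigl(\sum_\rho d_\rho(q)\,f^\tau_{\alpha^{(k)},\rho}\Bigr)Q_\tau(X),$$
and each coefficient lies in $\mathbb{Z}^+[q]$ because the $d_\rho(q)$ do and the $f^\tau_{\alpha^{(k)},\rho}$ are non-negative integers. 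Setting $q=1$ collapses $c_{\mu\nu}(1)$ to the ordinary structure constants and recovers Equation \ref{eq:5}, which confirms the normalization.

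I expect the main obstacle to be the claim that $\vartheta(s_\nu)$ is Schur $Q$-positive, i.e.\ making rigorous the peak-function expansion of the folded $Q$-function over standard folded tableaux. This requires the folded counterpart of Stembridge's enriched $P$-partition theory — equivalently, extending the standardization argument behind Theorem \ref{thm:peak} to folded tableaux — and care is needed to reconcile the region where the semistandard rules are inverted with the reading-word conventions for marked entries, so that Lemma \ref{lem:7} and hence Proposition \ref{prop:kor} survive in the folded setting. A shortcut, if one prefers, is to invoke directly the known fact that the homomorphism $\Lambda\to\Gamma$ sending $p_{2i}\mapsto 0$ and $p_{2i-1}\mapsto 2p_{2i-1}$ — which is exactly the restriction of $\vartheta$ to symmetric functions, and which carries LLT polynomials (being symmetric) to the sum in the definition — takes Schur functions to non-negative combinations of Schur $Q$-functions; then Step two is immediate and everything else is linear bookkeeping with results already in the paper.
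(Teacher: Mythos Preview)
Your proposal is correct and follows essentially the same route as the paper: use Schur positivity of the LLT polynomial $GF^{(\lfloor k/2\rfloor)}_{\mu/\mu_0}$, apply the ``peak map'' $F_{Des(T)}\mapsto 2^{|Peak(T)|+1}G_{Peak(T)}$ term by term to turn each $s_\gamma$ into the skew Schur $Q$-function $Q_{\gamma^+/\delta_{\ell(\gamma)}}$, and then conclude by Stembridge's positivity for skew and product (Theorem~\ref{thm:Stembridge}). Your framing via the linear operator $\vartheta$ is exactly the paper's ``replace $F_{Des}$ by $2^{|Peak|+1}G_{Peak}$'' step, written more formally.

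One remark: the ``main obstacle'' you flag is not really an obstacle, and you do not need to develop a separate peak expansion for folded $Q$-functions. The preliminaries already record $Q_{\lambda\backslash\mu}(X)=\sum_{T\in SShT(\lambda\backslash\mu)}G_{Peak(T)}(X)$ (with the appropriate power of $2$), and a standard shifted tableau of the skew shape $\gamma^+/\delta_{\ell(\gamma)}$ is literally a standard Young tableau of $\gamma$ with the same reading word, hence the same descent and peak sets. That single observation gives $\vartheta(s_\gamma)=Q_{\gamma^+/\delta_{\ell(\gamma)}}$ immediately, which is precisely how the paper handles it; no folded-tableau standardization or $p_n$-homomorphism detour is required.
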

\begin{proof} Let $(\alpha^{(1)},\alpha^{(2)},\ldots,\alpha^{(k)})$ be the $k$-abacus representation for $\lambda$, and $\mu$ is the be the unshifted partition corresponding to the $\lfloor k/2 \rfloor$-quotient $(\mu^1,\mu^2,\ldots,\mu^{\lfloor k/2 \rfloor})$ as defined in Theorem \ref{thm:last}. The LLT polynomial for $\mu$ satisfies

\begin{eqnarray*}
GF^{(\lfloor k/2 \rfloor)}_{\mu/\mu_0}(X;q)=\sum_{T \in SRT_{\lfloor k/2 \rfloor}(\mu)} q^{cospin (T)} F_{Des(T)}(X)=\sum f_{\gamma}^{\mu}(q) s_{\gamma}(X).
\end{eqnarray*} where $\gamma$ are unshifted shapes and $f_{\gamma}^{\mu}(q)$ have positive integer coefficients. As any unshifted $\gamma$ can be seen as a skew-shifted shape $\gamma^+/\delta_{\ell(\gamma)}$. Multiplying by $2^{|Peak(T)|+1} F_{Peak(T)}(X)$ instead $ F_{Des(T)}(X)$ for each $\gamma$ tableau $T$ corresponds to calculating $Q_{\gamma^+/\delta_{\ell(\gamma)}}(X)$ instead of $s_{\gamma}(X)$. So we have:
\begin{eqnarray*}
QR^{(k)}_{\lambda}(X;q):= Q_{\alpha^{(k)}}(X)\big(\sum_{T \in SRT_k(\mu)} q^{cospin (T)} 2^{|Peak(T)|+1} F_{Peak(T)}(X) \big)= Q_{\alpha^{(k)}}(X)\big(\sum f_{\gamma}^{\mu}(q) Q_{\gamma+/\delta_{\ell(\gamma)}}(X)\big)
\end{eqnarray*}
As multiplication and skewing operations on Schur's Q-functions give positive expansions into Schur's Q-functions the result follows. 
\end{proof}

Let us finish with calculating an example.

Consider the shape $(9,8,6,2)$ with the $5$-ribbon quotient $\{(2,1),(2),\varnothing\}$ with standard fillings given in Figure \ref{fig:fullpage}. It has the Q $5$-ribbon function:
\begin{eqnarray*}
QR^{(5)}_{(9,8,6,2)}(X)= Q_{(3,1)/(1)}(X)~Q_{(3)/(1)}(X)= 2Q_{(5)}(X)+4Q_{(4,1)}(X)+3Q_{(3,2)}(X)
\end{eqnarray*} 

Viewed as a $2$-quotient, $\{(2,1),(2)\}$ corresponds to unshifted shape $(4,4,1,1)$ with the LLT polynomial:

\begin{eqnarray*}
F^{2}_{(4,4,1,1)}(X;q)&=&\sum_{T \in SRT_2(4,4,1,1)} q^{cospin (T)} F_{Des(T)}(X)=q^2s_{(2,2,1)}(X)+qs_{(3,1,1)}(X)+qs_{(3,2)}(X)+s_{(4,1)}(X)
\end{eqnarray*} Viewing $(2,2,1)$, $(3,1,1)$, $(3,2)$ and $(4,1)$ as skew shifted shapes, we get the following:
\begin{eqnarray*}
QR^{(5)}_{(9,8,6,2)}(X;q)&=& P_{\varnothing}(X)\big(q^2Q_{(5,3,2)/(2,1)}(X)+qQ_{(5,4,1)/(2,1)}(X)+ qQ_{(4,2)/(1)}(X)+Q_{(5,1)/(1)}(X) \big)\\
&=& (q+1)Q_{(5)}(X)+(q^2+2q+1)Q_{(4,1)}(X)+(q^2+2q)Q_{(3,2)}(X)
\end{eqnarray*}

\section*{Acknowlegdements}
The author would like to thank Prof. Sami Assaf for valuable direction and encouragement throughout this project.

\bibliography{main}

\begin{thebibliography}{10}

\bibitem{MR777705}
Ira~M. Gessel.
\newblock Multipartite {$P$}-partitions and inner products of skew {S}chur
  functions.
\newblock In {\em Combinatorics and algebra ({B}oulder, {C}olo., 1983)},
  volume~34 of {\em Contemp. Math.}, pages 289--317. Amer. Math. Soc.,
  Providence, RI, 1984.

\bibitem{MR2138143}
J.~Haglund, M.~Haiman, and N.~Loehr.
\newblock A combinatorial formula for {M}acdonald polynomials.
\newblock {\em J. Amer. Math. Soc.}, 18(3):735--761, 2005.

\bibitem{MR2115257}
J.~Haglund, M.~Haiman, N.~Loehr, J.~B. Remmel, and A.~Ulyanov.
\newblock A combinatorial formula for the character of the diagonal
  coinvariants.
\newblock {\em Duke Math. J.}, 126(2):195--232, 2005.

\bibitem{MR644144}
Gordon James and Adalbert Kerber.
\newblock {\em The representation theory of the symmetric group}, volume~16 of
  {\em Encyclopedia of Mathematics and its Applications}.
\newblock Addison-Wesley Publishing Co., Reading, Mass., 1981.
\newblock With a foreword by P. M. Cohn, With an introduction by Gilbert de B.
  Robinson.

\bibitem{MR1094746}
Tadeusz J\'ozefiak.
\newblock Schur {$Q$}-functions and cohomology of isotropic {G}rassmannians.
\newblock {\em Math. Proc. Cambridge Philos. Soc.}, 109(3):471--478, 1991.

\bibitem{MR1434225}
Alain Lascoux, Bernard Leclerc, and Jean-Yves Thibon.
\newblock Ribbon tableaux, {H}all-{L}ittlewood functions, quantum affine
  algebras, and unipotent varieties.
\newblock {\em J. Math. Phys.}, 38(2):1041--1068, 1997.

\bibitem{MR3443860}
I.~G. Macdonald and and.
\newblock {\em Symmetric functions and {H}all polynomials}.
\newblock Oxford Classic Texts in the Physical Sciences. The Clarendon Press,
  Oxford University Press, New York, second edition, 2015.
\newblock With contribution by A. V. Zelevinsky and a foreword by Richard
  Stanley, Reprint of the 2008 paperback edition [ MR1354144].

\bibitem{MR809494}
A.~O. Morris and A.~K. Yaseen.
\newblock Some combinatorial results involving shifted {Y}oung diagrams.
\newblock {\em Math. Proc. Cambridge Philos. Soc.}, 99(1):23--31, 1986.

\bibitem{MR1264418}
J\o rn~B. Olsson.
\newblock {\em Combinatorics and representations of finite groups}, volume~20
  of {\em Vorlesungen aus dem Fachbereich Mathematik der Universit\"at GH Essen
  [Lecture Notes in Mathematics at the University of Essen]}.
\newblock Universit\"at Essen, Fachbereich Mathematik, Essen, 1993.

\bibitem{MR1580818}
J.~Schur.
\newblock \"uber die {D}arstellung der symmetrischen und der alternierenden
  {G}ruppe durch gebrochene lineare {S}ubstitutionen.
\newblock {\em J. Reine Angew. Math.}, 139:155--250, 1911.

\bibitem{MR991411}
John~R. Stembridge.
\newblock Shifted tableaux and the projective representations of symmetric
  groups.
\newblock {\em Adv. Math.}, 74(1):87--134, 1989.

\bibitem{MR1389788}
John~R. Stembridge.
\newblock Enriched {$P$}-partitions.
\newblock {\em Trans. Amer. Math. Soc.}, 349(2):763--788, 1997.

\end{thebibliography}
\bibliographystyle{plain}
\bigskip

\end{document}